\numberwithin{equation}{section}
\newtheorem{theorem}{Theorem}[section]
\newtheorem{lemma}{Lemma}[section]
\newtheorem{corollary}{Corollary}[section]
\theoremstyle{definition}
\newtheorem{definition}{Definition}[section]
\theoremstyle{remark}
\newtheorem{remark}{Remark}[section]
\DeclareMathOperator{\sgn}{sign}
\begin{document}

\title[Zakharov--Kuznetsov equation]
{An initial-boundary value problem for three-dimensional Zakharov--Kuznetsov equation}

\author[A.V.~Faminskii]{Andrei~V.~Faminskii}

\subjclass[2010]{35Q53, 35D30}

\address{Department of Mathematics, Peoples' Friendship University of Russia,
Miklukho--Maklai str. 6, Moscow, 117198, Russia}
\email{afaminskii@sci.pfu.edu.ru}

\keywords{Zakharov--Kuznetsov equation; initial-boundary value problems; weak solutions}
\date{}
\maketitle

{\scriptsize \centerline{Peoples' Friendship University of Russia, Moscow, Russia}}

\begin{abstract}
An initial-boundary value problem with homogeneous Dirichlet boundary conditions for three-dimensional Zakharov--Kuznetsov equation is considered. Results on global existence, uniqueness and large-time decay of weak solutions in certain weighted spaces are established.
\end{abstract}

\section{Introduction. Description of main results}\label{S1}

Three-dimensional Zakharov--Kuznetsov equation (ZK) 
\begin{equation}\label{1.1}
u_t+bu_x+ u_{xxx}+u_{xyy}+u_{xzz}+uu_x=f(t,x,y,z)
\end{equation}
($u=u(t,x,y,z)$, $b$ -- real constant) for the first time was derived in \cite{ZK} for description of ion-acoustic waves in plasma put in the magnetic field. Further, this equation became to be considered as a model equation for non-linear waves propagating in dispersive media in the preassigned direction $(x)$ with deformations in the transverse directions. A rigorous derivation of the ZK model can be found, for example, in \cite{LLS}. Zakharov--Kuznetsov equation generalizes Korteweg--de~Vries equation (KdV) $u_t+bu_x+u_{xxx}+uu_x=0$ in the multidimensional case.

In the present paper we consider an initial-boundary value problem on a layer $\Sigma =\mathbb R\times \Omega$, where $\Omega$ is a certain bounded domain in $\mathbb R^2$, with initial and boundary conditions
\begin{equation}\label{1.2}
u\big|_{t=0} =u_0(x,y,z),
\end{equation}
\begin{equation}\label{1.3}
u\big|_{(0,T)\times \partial\Sigma} =0, 
\end{equation}
where $T>0$ is arbitrary. We establish results on global existence, uniqueness and large-time decay of weak solutions to this problem. Existence and uniqueness of weak solutions are also obtained for the initial value problem.

The theory of ZK equation is more or less developed in the two-dimensional case, that is for an equation
$$
u_t+bu_x+ u_{xxx}+u_{xyy}+uu_x=f(t,x,y),
$$
especially for the initial-value problem. In particular, classes of global well-posedness were constructed in \cite{F95} for  initial data from the spaces $H^k(\mathbb R^2)$, $k\in\mathbb N$. Other results can be found in \cite{S1, F89, LP, LPS}.
Initial-boundary value problems on domains of the type $I\times \mathbb R$, where $I$ is a certain interval (bounded or unbounded), are studied in \cite{F02, F07, FB, F08, ST, F12, DL} and others. Initial-boundary value problems for $y$ varying in a bounded interval turned out to be the most complicated ones (\cite{LPS, LT, L13, BF, STW, DL}), although such problems seems to be more natural from the physical point of view. In particular, there are no results on global well-posedness in classes of regular solutions for the strips $\mathbb R\times I$.

The theory of equation \eqref{1.1} is on the initial level. Certain results on global existence of weak solutions (without uniqueness) for the initial value problem follow from \cite{S1, F89}. Local well-posedness for initial data from 
$H^s(\mathbb R^3)$, $s>1$, is established in \cite{LS, RS}. Results similar to \cite{S1, F89}, that is global existence without uniqueness of weak solutions for initial-boundary value problems on domains of the type $I\times \mathbb R^2$ can be found in \cite{F02, FB, ST, F12} and on a bounded rectangle in \cite{STW, W2}. Regular solutions to one initial-boundary value problem on a bounded rectangle are considered in \cite{W1, L14} and in the last paper global regular solutions are constructed for small initial data.

Homogeneous equation \eqref{1.1} possesses two conservation laws for solutions to the considered problem:
\begin{equation}\label{1.4}
\iint_{\Sigma}u^2\,dxdydz=\text{const}, \mbox{  }\iint_{\Sigma}\left(u_x^2+u^2_y+u^2_z-
\frac 13 u^3\right)\,dxdydz=\text{const}. 
\end{equation}
Of course, similar conservation laws exist for the initial value problem for homogeneous KdV equation. It is well-known that the number of conservation laws for KdV is infinite, while other ones for ZK are not found. The last circumstance, for example, did not allow to apply in \cite{LS, RS} their profound investigations of the linearized equation to establish global well-posedness. In the present paper we supplement these two conservation laws with some decay of solutions when $x\to +\infty$ and construct classes of global existence and uniqueness without any assumptions on the size of the initial data. According to our best knowledge it is the first result of such a type for equation \eqref{1.1}. For KdV such method was for the first time used in \cite{KF, K, F88}. In the two-dimensional case similar results for ZK were obtained in \cite{BF}.

In all the consequent results the domain $\Omega$ is bounded and satisfy the following assumption:

\noindent either 1) $\partial\Omega \in C^3$ (in the conventional sense, see, for example, \cite{Mikh}),

\noindent or 2) $\Omega$ is a rectangle $(0,L_1)\times (0,L_2)$ for certain positive $L_1, L_2$   

\noindent (one can introduce certain more complicated assumptions on $\Omega$ such that the aforementioned domains are particular cases of more general ones and all the results of the paper hold but for simplicity we choose this variant). 
The symbol $|\Omega|$ denotes the measure of $\Omega$.

Let $\Pi_T =(0,T)\times \Sigma$, $x_+=\max(x,0)$, $\mathbb R_+=(0,+\infty)$, $\Sigma_+=\mathbb R_+ \times \Omega$. 

For an integer $k\geq 0$ let
$$
|D^k\varphi|=\Bigl(\sum_{k_1+k_2+k_3=k}(\partial^{k_1}_x\partial_y^{k_2}\partial_z^{k_3}\varphi)^2\Bigr)^{1/2}, \qquad |D\varphi|=|D^1\varphi|.
$$
 
Let $L_p=L_p(\Sigma)$, $L_{p,+}=L_p(\Sigma_+)$, $H^k=H^k(\Sigma)$, $H_0^1=H_0^1(\Sigma)=\{\varphi\in H^1: 
\varphi|_{\partial\Sigma}=0\}$ (note that under the aforementioned assumptions on $\Omega$ the space $H_0^1$ coincides with the closure of the space $C_0^\infty(\Sigma)$ in the $H^1$-norm).

For a measurable non-negative on $\mathbb R$ function $\psi(x)\not\equiv \text{const}$, let
\begin{equation*}
L_2^{\psi(x)} =\{\varphi(x,y,z): \varphi\psi^{1/2}(x)\in L_2\}
\end{equation*}
with a natural norm. In particularly important cases we use the special notation
\begin{equation*}
L_2^\alpha=L_2^{(1+x_+)^{2\alpha}}\quad \forall\ \alpha\ne 0,\quad 
L_2^0=L_2,\qquad
L_2^{\alpha,exp}=L_2^{1+e^{2\alpha x}}\quad \forall\  \alpha>0.
\end{equation*}
Restrictions of these spaces on $\Sigma_+$ are denoted by $L_{2,+}^{\psi(x)}$, $L_{2,+}^\alpha$, 
$L_{2,+}^{\alpha,exp}$.

Let for an integer $k\geq 0$
\begin{equation*}
H^{k,\psi(x)}=\{\varphi: |D^j\varphi|\in L_2^{\psi(x)}, \  j=0,\dots,k\}
\end{equation*}
with a natural norm,
\begin{equation*}
H^{k,\alpha}=H^{k,(1+x_+)^{2\alpha}}\quad \forall\ \alpha\ne 0,\quad H^{k,0}=H^k,
\qquad H^{k,\alpha,exp}=H^{k,1+e^{2\alpha x}} \quad \forall\  \alpha>0.
\end{equation*}
Let $H^{1,\psi(x)}_0=\{\varphi\in H^{1,\psi(x)}: \varphi|_{\partial\Sigma}=0\}$ with similar notation for 
$H_0^{1,\alpha}$ and $H_0^{1,\alpha,exp}$. Let $H^{-1,\psi(x)}= \{\varphi: \varphi\psi^{1/2}(x)\in H^{-1}\}$.

We say that $\psi(x)$ is an admissible weight function if $\psi$ is an infinitely smooth positive function on $\mathbb R$ such that $|\psi^{(j)}(x)|\leq c(j)\psi(x)$ for each natural $j$ and all $x\in\mathbb R$. Note that such a function has not more than exponential growth and not more than exponential decrease at $\pm\infty$. It was shown in \cite{F12} that $\psi^s(x)$ for any $s\in\mathbb R$ is also an admissible weight function.

As an important example of such functions, we introduce for $\alpha\geq 0$ special infinitely smooth functions $\rho_\alpha(x)$ in the following way: $\rho_\alpha(x)=1+e^{2x}$ when $x\leq -1$,  $\rho_\alpha(x)=1+(1+x)^{2\alpha}$ for $\alpha>0$ and $\rho_0(x)=3-(1+x)^{-1/2}$ when $x\geq0$, $\rho'_\alpha(x)>0$ when $x\in (-1,0)$. 

Note that both $\rho_\alpha$ and $\rho'_\alpha$ are admissible weight functions and $\rho'_\alpha(x) \leq c(\alpha)\rho_\alpha(x)$ for all $x\in\mathbb R$.
 Moreover, for $\alpha\geq 0$
\begin{equation*}
L_2^{\rho_\alpha(x)}=L_2^\alpha, \qquad H^{k,\rho_\alpha(x)}=H^{k,\alpha}.
\end{equation*}

We construct solutions to the considered problem in spaces $X^{k,\psi(x)}(\Pi_T)$,\linebreak $k=0 \mbox{ or }1$, for admissible non-decreasing weight functions $\psi(x)\geq 1\ \forall x\in\mathbb R$, consisting of functions $u(t,x,y,z)$ such that
\begin{equation}\label{1.5}
u\in C_w([0,T]; H^{k,\psi(x)}), \qquad
|D^{k+1}u|\in L_2(0,T;L_2^{\psi'(x)})
\end{equation}
(the symbol $C_w$ denotes the space of  weakly continuous mappings),
\begin{equation}\label{1.6}
\lambda(|D^{k+1} u|;T) =
\sup_{x_0\in\mathbb R}\int_0^T\!\! \int_{x_0}^{x_0+1}\!\! \iint_\Omega |D^{k+1}u|^2\,dydzdxdt<\infty, \quad
u\big|_{(0,T)\times\partial\Sigma}=0
\end{equation}
(let $X^{\psi(x)}(\Pi_T)=X^{0,\psi(x)}(\Pi_T)$). 

In particularly important cases we use the special notation
\begin{equation*}
X^{k,\alpha}(\Pi_T)=X^{k,\rho_\alpha(x)}(\Pi_T),\quad X^\alpha(\Pi_T)=X^{0,\alpha}(\Pi_T)
\end{equation*}
and for $\alpha>0$
\begin{equation*}
X^{k,\alpha,exp}(\Pi_T)=X^{k,1+e^{2\alpha x}}(\Pi_T),\quad X^{\alpha,\exp}(\Pi_T)=X^{0,\alpha,\exp}(\Pi_T).
\end{equation*}
It is easy to see that $X^{k,0}(\Pi_T)$ coincides with a space of functions $u\in C_w([0,T]; H^k)$ for which \eqref{1.6} holds, $X^{k,\alpha}(\Pi_T)$, $\alpha>0$, -- with a space of functions $u\in C_w([0,T]; H^{k,\alpha})$  for which \eqref{1.6} holds and, in addition, $|D^{k+1} u|\in L_2(0,T;L_{2,+}^{\alpha-1/2})$; $X^{k,\alpha,exp}(\Pi_T)$ -- with a space of functions $u\in C_w([0,T]; H^{k,\alpha,exp})$  for which \eqref{1.6} holds and, in addition,
$|D^{k+1} u|\in L_2(0,T;L_{2,+}^{\alpha,exp})$.

\begin{theorem}\label{T1.1}
Let $u_0\in L_2^{\psi(x)}$, $f\in L_1(0,T; L_2^{\psi(x)})$ for  certain $T>0$ and an admissible weight function $\psi(x)\geq 1\ \forall x\in\mathbb R$ such that $\psi'(x)$ is also an admissible weight function. Then there exists a weak solution to problem \eqref{1.1}--\eqref{1.3} $u \in X^{\psi(x)}(\Pi_T)$. 
\end{theorem}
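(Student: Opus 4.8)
The plan is to obtain the solution by a parabolic regularization followed by a compactness passage to the limit, the decisive new ingredient being a weighted energy identity which, combined with the size‑independent conservation of $\iint_\Sigma u^2\,dxdydz$, closes globally \emph{without any smallness hypothesis} on the data (this is exactly the mechanism announced in the introduction).

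\emph{Regularization.} For $\varepsilon\in(0,1]$ I would solve the equation obtained from \eqref{1.1} by adding the term $\varepsilon\Delta^2u$, $\Delta:=\partial_x^2+\partial_y^2+\partial_z^2$, on $\Pi_T$, with the initial data \eqref{1.2} and the homogeneous conditions $u=\partial_n u=0$ on $(0,T)\times\partial\Sigma$ (compatible with \eqref{1.3}); alternatively one first truncates to cylinders $(-R,R)\times\Omega$ with dissipative conditions at $x=\pm R$ and then lets $R\to\infty$, but this is inessential. The linear part is a lower‑order perturbation of the nonnegative operator $\varepsilon\Delta^2$, hence generates an analytic semigroup on $L_2^{\psi(x)}$, and since $u\mapsto uu_x$ is locally Lipschitz on the corresponding parabolic scale (using $H^2\hookrightarrow L_\infty$ in three dimensions), Duhamel's formula together with a contraction argument gives a unique local‑in‑time regularized solution $u^\varepsilon$; the a priori bounds below make it global on $[0,T]$. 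These bounds I would derive either on a Galerkin approximation or using the parabolic smoothing of the $\varepsilon\Delta^2$ term, so that all the integrations by parts that follow are legitimate.

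\emph{A priori estimates, uniform in $\varepsilon$.} First, testing with $2u$ and integrating over $\Sigma$ annihilates the dispersive and convective terms and produces the nonnegative term $2\varepsilon\iint_\Sigma(\Delta u)^2$, whence $\|u^\varepsilon(t)\|_{L_2}\le\|u_0\|_{L_2}+\int_0^T\|f(s)\|_{L_2}\,ds=:N$ for all $t\in[0,T]$, uniformly in $\varepsilon$. Next I would test with $2u\psi(x)$ and integrate: all boundary contributions over $\partial\Sigma$ carry the factor $u|_{\partial\Sigma}=0$ and drop out; the third‑order term gives $3\iint_\Sigma u_x^2\psi'-\iint_\Sigma u^2\psi'''$, the terms $u_{xyy},u_{xzz}$ give $\iint_\Sigma u_y^2\psi'+\iint_\Sigma u_z^2\psi'$, the term $bu_x$ gives $-b\iint_\Sigma u^2\psi'$, and the $\varepsilon\Delta^2$ term gives a nonnegative quantity plus terms bounded by $c\iint_\Sigma u^2\psi$ uniformly in $\varepsilon\le1$ (admissibility of $\psi$, i.e. $|\psi^{(j)}|\le c\psi$, is used throughout to dominate the lower‑order pieces by $c\iint_\Sigma u^2\psi$); the convective term gives $-\tfrac23\iint_\Sigma u^3\psi'$. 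Thus, with $M:=\iint_\Sigma u^2\psi$ and $D:=\iint_\Sigma|Du|^2\psi'$,
\[
\tfrac{d}{dt}M+D\ \le\ cM+\tfrac23\Bigl|\iint_\Sigma u^3\psi'\Bigr|+2M^{1/2}\Bigl(\iint_\Sigma f^2\psi\Bigr)^{1/2}.
\]
To handle the cubic term I would cover $\mathbb R$ by the unit intervals $(j,j+1)$, $j\in\mathbb Z$, and put $Q_j=(j,j+1)\times\Omega$; as $\psi'$ is an admissible weight it varies by a bounded factor on each $Q_j$, uniformly in $j$. The three‑dimensional Gagliardo--Nirenberg inequality on $Q_j$ gives $\int_{Q_j}|u|^3\le c\|u\|_{L_2(Q_j)}^{3/2}\|u\|_{H^1(Q_j)}^{3/2}$; bounding $\|u\|_{L_2(Q_j)}\le\|u\|_{L_2(\Sigma)}=N$, summing over $j$ against $\psi'$, and using $\psi'\le c\psi$ together with $\sum_j\psi'(j)\|u\|_{L_2(Q_j)}^2\le cM$ and Hölder/Young yields $\bigl|\iint_\Sigma u^3\psi'\bigr|\le\tfrac12D+c(1+N^4)M$. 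Substituting and applying Gronwall's inequality (note $(\iint_\Sigma f^2\psi)^{1/2}\in L_1(0,T)$) gives, uniformly in $\varepsilon$,
\[
\sup_{0\le t\le T}\iint_\Sigma(u^\varepsilon)^2\psi\ +\ \int_0^T\!\!\iint_\Sigma|Du^\varepsilon|^2\psi'\ \le\ C(T).
\]
Repeating the computation with the multiplier $2u\,\theta(x-x_0)$, $\theta$ a fixed smooth nondecreasing cut‑off with $\theta\equiv0$ on $(-\infty,0]$ and $\theta\equiv1$ on $[1,\infty)$, and absorbing the cubic term as above via the $L_\infty L_2$ bound, gives $\int_0^T\!\int_{x_0}^{x_0+1}\!\iint_\Omega|Du^\varepsilon|^2\le C(T)$ with $C(T)$ independent of $x_0$ and of $\varepsilon$, i.e. $\lambda(|Du^\varepsilon|;T)\le C(T)$.

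\emph{Passage to the limit and the main obstacle.} These bounds make $\{u^\varepsilon\}$ bounded in $L_\infty(0,T;L_2^{\psi})\cap L_2(0,T;H^1(K))$ for each bounded $K\subset\Sigma$ (here $\psi'>0$ is used), while the equation bounds $\{u^\varepsilon_t\}$ in $L_1(0,T;H^{-3}(K))$ up to a term that is $O(\varepsilon)$; so by the Aubin--Lions--Simon lemma a subsequence converges to some $u$ strongly in $L_2(0,T;L_2(K))$ and $\ast$‑weakly in $L_\infty(0,T;L_2^{\psi})$, and weak lower semicontinuity preserves the bounds on $|Du|$ and on $\lambda(|Du|;T)$. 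The strong local convergence passes the nonlinearity, $u^\varepsilon u^\varepsilon_x=\tfrac12((u^\varepsilon)^2)_x\to\tfrac12(u^2)_x$ in $\mathcal D'(\Pi_T)$, while $\varepsilon\Delta^2u^\varepsilon\to0$, so $u$ satisfies the weak formulation of \eqref{1.1}--\eqref{1.3} (an integral identity against smooth test functions vanishing on $(0,T)\times\partial\Sigma$ and at $t=T$); weak continuity $u\in C_w([0,T];L_2^{\psi})$ and the initial condition \eqref{1.2} follow as usual from the uniform bound and the equation, so $u\in X^{\psi(x)}(\Pi_T)$ is the required solution. The one genuinely delicate step is the weighted estimate, and inside it the cubic term: in three dimensions $\iint_\Sigma u^3\psi'$ is critical with respect to $\iint_\Sigma|Du|^2$ and cannot be absorbed by the dispersive gain alone — it is controlled only by jointly exploiting (i) the size‑independent bound $\|u\|_{L_2}\le N$, (ii) the admissibility of $\psi$ and $\psi'$, which lets the weight be treated as slab‑wise constant inside a localized Gagliardo--Nirenberg inequality, and (iii) the monotonicity $\psi'\ge0$, which gives $\iint_\Sigma|Du|^2\psi'$ its favourable sign. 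Everything else — solvability of the regularized problem, the vanishing of the boundary terms over $\partial\Sigma$, and the compactness argument — is routine.
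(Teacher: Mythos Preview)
Your overall strategy---parabolic regularization, uniform weighted energy estimates closed via the size-independent unweighted $L_2$ bound, then Aubin--Lions--Simon compactness---is the paper's, and your identification of the cubic term $\iint_\Sigma u^3\psi'$ as the decisive obstacle is correct. The execution differs in two technical choices worth noting.

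First, the paper regularizes by the second-order term $-h\Delta u$ rather than $\varepsilon\Delta^2 u$, and \emph{simultaneously truncates} the nonlinearity to a globally Lipschitz $g_h$ with $g_h(u)=u^2/2$ for $|u|\le 1/h$ (formula~(3.7)). This buys a very clean solvability of the regularized problem by contraction (Lemma~3.1), resting on the explicit Fourier construction of the linear solution in Lemmas~2.1--2.4, and sidesteps the analytic-semigroup/contraction argument you invoke for $\varepsilon\Delta^2$ with the full quadratic nonlinearity on the unbounded weighted cylinder, which is workable but not entirely routine. The truncation also means one must check $g_h(u_h)\to u^2/2$ in $L_{1,\mathrm{loc}}$ at the limit, which the paper does explicitly.

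Second, for the cubic term the paper does not localize to slabs: it applies its weighted interpolation Lemma~1.1 directly (inequality~(3.12)),
\[
\iiint |u|^3\psi' \le \|u\|_{L_2}\,\bigl\|u(\psi')^{1/2}\bigr\|_{L_4}^2
\le c\|u\|_{L_2}\Bigl(\bigl\||Du|(\psi')^{1/2}\bigr\|_{L_2}^{3/2}\bigl\|u(\psi')^{1/2}\bigr\|_{L_2}^{1/2}+\bigl\|u(\psi')^{1/2}\bigr\|_{L_2}^2\Bigr),
\]
which gives your bound $\tfrac12 D+c(1+N^4)M$ in one step and is uniform in $\Omega$. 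Your slab-wise Gagliardo--Nirenberg plus summation is a correct substitute and in effect reproves a special case of Lemma~1.1.

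One small repair: for the local-smoothing bound $\lambda(|Du|;T)$ the paper uses the weight $\rho_0(x-x_0)$, whose derivative is strictly positive on all of $\mathbb R$; a cutoff $\theta$ with $\theta'\equiv 0$ outside $[0,1]$ has $\theta'(0)=\theta'(1)=0$ and does not directly dominate $\int_{x_0}^{x_0+1}\!\iint_\Omega|Du|^2$. Widening the transition region (e.g.\ $\theta=0$ on $(-\infty,-1]$, $\theta=1$ on $[2,\infty)$, $\theta'\ge c>0$ on $[0,1]$) or simply using $\rho_0(x-x_0)$ fixes this.
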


\begin{theorem}\label{T1.2}
Let $u_0\in H_0^{1,\psi(x)}$, $f\in L_1(0,T; H_0^{1,\psi(x)})$ for  certain $T>0$ and an admissible weight function $\psi(x)\geq 1\ \forall x\in\mathbb R$ such that $\psi'(x)$ is also an admissible weight function. Then there exists a weak solution to problem \eqref{1.1}--\eqref{1.3} $u\in X^{1,\psi(x)}(\Pi_T)$ and it is unique in this space if $\psi(x)\geq \rho_{3/4}(x)$ $\forall x\in \mathbb R$.  
\end{theorem}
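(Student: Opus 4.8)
The assertion has two independent parts: the existence of a weak solution in $X^{1,\psi(x)}(\Pi_T)$, and --- when in addition $\psi\ge\rho_{3/4}$ --- its uniqueness in that space. I would treat them separately.

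\emph{Existence.} The plan is to repeat, one derivative higher, the scheme behind Theorem~\ref{T1.1}. First regularize: after truncating $x$ to a large interval $(-R,R)$ (or working directly on $\Sigma$), consider for $\varepsilon\in(0,1]$ the parabolic equation
\[
u_t+bu_x+u_{xxx}+u_{xyy}+u_{xzz}+uu_x+\varepsilon\bigl(u_{xxxx}+u_{yyyy}+u_{zzzz}\bigr)=f ,
\]
with \eqref{1.2}, the boundary condition \eqref{1.3}, and the additional boundary conditions needed to make the problem well posed in Sobolev classes (of Neumann type, or $\Delta u=0$ on $\partial\Sigma$, and two conditions at $x=\pm R$). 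The core of the argument is a family of a priori bounds, uniform in $\varepsilon$ and $R$, reproducing the norm of $X^{1,\psi(x)}(\Pi_T)$. Multiplying the equation by $2u\psi(x)$ and integrating over $\Sigma$ produces, after the dispersive terms are integrated by parts, the Kato smoothing gain $3\iint_\Sigma\psi'u_x^2+\iint_\Sigma\psi'(u_y^2+u_z^2)$ on the left-hand side, while $uu_x$ contributes a multiple of $\iint_\Sigma\psi'u^3$, which is absorbed via the Gagliardo--Nirenberg inequality in $\mathbb R^3$ (interpolating $L_3$ between $L_2$ and $L_6$, together with $H^1\hookrightarrow L_6$) and the admissibility bounds $|\psi^{(j)}|\le c\psi$; Gronwall then closes the $L_2^{\psi(x)}$-estimate. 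The multiplier corresponding to the second conservation law in \eqref{1.4} --- essentially $-2\operatorname{div}(\psi\nabla u)$, corrected by lower-order terms and a multiple of $u^2\psi$ --- yields an identity for $\tfrac{d}{dt}\iint_\Sigma(|Du|^2-\tfrac13u^3)\psi$ with a smoothing gain for $|D^2u|$, again closed by Gronwall once the cubic terms are controlled by Gagliardo--Nirenberg; its localized versions give $\lambda(|D^2u|;T)<\infty$ and, in the cases $\psi\ge\rho_\alpha$ or $\psi\ge 1+e^{2\alpha x}$, the extra $L_{2,+}$-bounds on $|D^2u|$ listed after \eqref{1.6}. Finally I pass to the limit $\varepsilon\to0$, $R\to\infty$: these bounds give weak-$*$ convergence in the spaces of \eqref{1.5}--\eqref{1.6}, while the uniform smoothing bound on $Du$ and the equation yield, by an Aubin--Lions/Kato compactness argument, strong $L_2$-convergence on bounded subsets of $\Pi_T$ --- enough to pass to the limit in $uu_x$; weak continuity in $t$ follows in the usual way.

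\emph{Uniqueness.} Let $u_1,u_2\in X^{1,\psi(x)}(\Pi_T)$ solve the problem with the same data; put $w=u_1-u_2$, $v=u_1+u_2$, so that $w$ solves weakly
\[
w_t+bw_x+w_{xxx}+w_{xyy}+w_{xzz}+\tfrac12\partial_x(vw)=0,\qquad w|_{t=0}=0,\quad w|_{(0,T)\times\partial\Sigma}=0 .
\]
After a standard mollification (or Steklov averaging) in $x$ to legitimize the computation, multiply by $2w\psi(x)$ and integrate over $\Sigma$:
\[
\frac{d}{dt}\iint_\Sigma\psi w^2+3\iint_\Sigma\psi'w_x^2+\iint_\Sigma\psi'(w_y^2+w_z^2)
=\iint_\Sigma(\psi'''+b\psi')w^2+\tfrac12\iint_\Sigma\psi'vw^2-\tfrac12\iint_\Sigma\psi\,v_x w^2 .
\]
Since $|\psi'|,|\psi'''|\le c\psi$, and $\psi^{1/2}v(t)$ lies in $H^1(\Sigma)\hookrightarrow L_6(\Sigma)$ uniformly in $t$ (using $\psi'\le c\psi$), the first two terms on the right are bounded by $C(t)\iint_\Sigma\psi w^2$ with $C\in L_1(0,T)$; the same remark gives $\psi^{1/2}w\in H^1(\Sigma)$ with norm bounded uniformly in $t$. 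The decisive term is $\iint_\Sigma\psi|v_x|w^2$, which I would estimate by splitting into $\{x<0\}$ and $\{x>0\}$ and using weighted Gagliardo--Nirenberg interpolation together with the a priori facts $|Dv|\in L_\infty(0,T;L_2^{\psi})$, $|D^2v|\in L_2(0,T;L_2^{\psi'})$, and $\psi^{1/2}w\in C_w([0,T];H^1)$. This is exactly where the hypothesis $\psi\ge\rho_{3/4}$ enters: on $\Sigma_+$ it forces decay of $w$ of order $(1+x)^{-3/4}$ in $L_2$ and local smoothing of $D^2v$ with weight of order $(1+x)^{1/2}$, which in three dimensions is precisely the margin that lets the interpolation close in the form
\[
\iint_\Sigma\psi|v_x|w^2\le\delta\Bigl(\iint_\Sigma\psi'|Dw|^2+\bigl\||D^2v|\bigr\|_{L_2^{\psi'}}^2\Bigr)+C(t)\iint_\Sigma\psi w^2 ,
\]
with $\delta$ arbitrarily small and $C\in L_1(0,T)$. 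Absorbing the $\delta$-terms on the left and applying Gronwall with $w|_{t=0}=0$ yields $w\equiv0$ on $\Pi_T$.

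\emph{Expected main obstacle.} The crux is this last nonlinear estimate. In two dimensions $H^1$ embeds into every $L_p$, $p<\infty$, leaving ample room; in three dimensions $H^1\hookrightarrow L_6$ is the endpoint, so one cannot afford to trade away integrability and must instead convert it into spatial decay supplied by the weight. The threshold at which this conversion just succeeds is $\psi\ge\rho_{3/4}$, the exponent $3/4$ being dictated by the $\mathbb R^3$ Gagliardo--Nirenberg exponents; this is why uniqueness is asserted only under that restriction. The remaining ingredients --- the parabolic regularization, the weighted energy and smoothing estimates, and the passage to the limit --- are by now fairly routine for Zakharov--Kuznetsov type equations, though technically laborious because of the weights.
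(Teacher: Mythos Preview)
Your existence sketch is close in spirit to the paper's, though the paper regularizes with the second-order term $-h\Delta u$ rather than a fourth-order operator and works directly on $\Sigma$ without truncating in $x$; it first builds the regularized solution by contraction (Lemma~3.2) and then derives uniform-in-$h$ bounds. The essential a priori machinery --- the weighted $L_2$ estimate, its combination with the weighted cubic identity to control $\iiint(|Du|^2-\tfrac13 u^3)\psi$, the local-smoothing bound $\lambda(|D^2u|;T)$, and Aubin--Lions compactness --- is the same as what you outline.

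For uniqueness, however, there is a genuine gap. You run the energy estimate on $w$ with the \emph{original} weight $\psi\ge\rho_{3/4}\ge 1$. The difficulty is the half-line $x<0$: there $\psi$ is bounded but $\psi'$ may be exponentially small (for $\psi=\rho_{3/4}$ one has $\psi'(x)=2e^{2x}$ when $x\le -1$), so the smoothing term $\iiint\psi'|Dw|^2$ gives no usable dissipation on the left. Correspondingly, in the H\"older splitting that produces the factor $\bigl(\iiint v_x^2(\psi/\psi')^{3/2}\bigr)^{1/2}$, the weight $(\psi/\psi')^{3/2}$ blows up like $e^{-3x}$ as $x\to-\infty$ and is not dominated by any norm of $v$ that you have. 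Your proposed final inequality, with a free-standing $\delta\||D^2v|\|_{L_2^{\psi'}}^2$ not multiplied by anything in $w$, cannot close a Gronwall argument: either $C(t)$ is independent of $\delta$ --- which would force a pointwise bound $|v_x|\le C(t)$, unavailable in $H^1(\mathbb R^3)$ --- or $C=C_\delta\to\infty$ as $\delta\to 0$ and nothing follows.

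The paper's device is to run the uniqueness estimate not with $\psi$ but with the auxiliary weight $\varkappa_{\alpha,\beta}$, equal to $e^{2\beta x}$ for $x\le -1$ and to $(1+x)^{2\alpha}$ for $x\ge 0$, which \emph{decays} at $-\infty$. Then $\varkappa'_{\alpha,\beta}\sim\varkappa_{\alpha-1/2,\beta}$ and, crucially, $(\varkappa_{\alpha,\beta}/\varkappa'_{\alpha,\beta})^{3/2}\le c\,\rho_{3/4}(x)$ for \emph{all} $x\in\mathbb R$; hence the H\"older factor is controlled by $\|u\|_{L_\infty(0,T;H^{1,3/4})}$, and after the three-dimensional interpolation \eqref{1.8} for $q=4$ one gets
\[
\iiint |u_x|\,w^2\,\varkappa_{\alpha,\beta}\le \varepsilon\iiint |Dw|^2\varkappa'_{\alpha,\beta}+C_\varepsilon\bigl(1+\|u\|_{L_\infty(0,T;H^{1,3/4})}^4\bigr)\iiint w^2\varkappa_{\alpha,\beta},
\]
which does close. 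The price is that continuous dependence (and hence uniqueness) is obtained only in the weaker norm $L_2^{\varkappa_{\alpha,\beta}}$; the paper notes explicitly (Remark~4.1) that the exponentially decreasing weight at $-\infty$ cannot be avoided by this method. So your reading of the exponent $3/4$ as dictated by the $\mathbb R^3$ Gagliardo--Nirenberg numbers is correct, but it only becomes operative after one switches to a weight with $\psi'\sim\psi$ on the left half-line.
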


\begin{remark}\label{R1.1}
It follows from Theorem~\ref{T1.2} that weak solutions to problem \eqref{1.1}--\eqref{1.3} are unique in the spaces
$X^{1,3/4}(\Pi_T)$ and in the spaces $X^{1,\alpha,exp}(\Pi_T)$ for any $\alpha>0$ (and exist under corresponding assumptions on $u_0$ and $f$).
\end{remark}

For small solutions to the considered problem the following large-time decay result holds.

\begin{theorem}\label{T1.3}
Let $\Omega_0=+\infty$ if $b\leq 0$, and if $b>0$ there exists $\Omega_0>0$ such that in both cases if 
$|\Omega| <\Omega_0$ there exist $\alpha_0>0$, $\epsilon_0>0$ and $\beta>0$ such that if $u_0\in L_2^{\alpha,exp}$ for $\alpha\in (0,\alpha_0]$, $\|u_0\|_{L_2}\leq\epsilon_0$,  $f\equiv 0$, then there exists a weak solution $u(t,x,y,z)$ to problem \eqref{1.1}--\eqref{1.3} from the space $X^{\alpha,exp}(\Pi_T)$ $\forall T>0$ satisfying an inequality
\begin{equation}\label{1.7}
\|e^{\alpha x}u(t,\cdot,\cdot,\cdot)\|_{L_2}\leq e^{-\alpha\beta t}\|e^{\alpha x}u_0\|_{L_2}\qquad \forall t\geq 0.
\end{equation}
\end{theorem}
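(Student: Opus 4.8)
The plan is first to invoke Theorem~\ref{T1.1} with the weight $\psi(x)=1+e^{2\alpha x}$ — which is an admissible weight function, has $\psi\geq1$, and $\psi'(x)=2\alpha e^{2\alpha x}$ is also admissible — to obtain on each interval $[0,T]$ a weak solution $u\in X^{\alpha,exp}(\Pi_T)$; since all the estimates below are uniform in $T$, a standard diagonal/compactness argument over $T=1,2,\dots$ then yields one solution defined for all $t\geq0$. The real content is the decay bound \eqref{1.7}, which I would derive as an a~priori estimate on the smooth solutions of the regularized problems that approximate $u$ in the construction of Theorem~\ref{T1.1}, and then transfer to $u$ by weak lower semicontinuity. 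Throughout, write $M(t)=\iint_\Sigma e^{2\alpha x}u^2\,dxdydz=\|e^{\alpha x}u(t,\cdot,\cdot,\cdot)\|_{L_2}^2$; note $M(0)=\|e^{\alpha x}u_0\|_{L_2}^2\leq\|u_0\|_{L_2^{\alpha,exp}}^2<\infty$, so \eqref{1.7} is equivalent to $M(t)\leq e^{-2\alpha\beta t}M(0)$.

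The key computation is the weighted energy identity obtained by multiplying \eqref{1.1} (with $f\equiv0$) by $2e^{2\alpha x}u$ and integrating over $\Sigma$, treating the dispersive group $u_{xxx}+u_{xyy}+u_{xzz}=\partial_x\Delta u$ by repeated integration by parts, using $u|_{\partial\Sigma}=0$ for the contributions on $\partial\Omega$ and the exponential weight for those at $x\to\pm\infty$. With $\psi=e^{2\alpha x}$, so that $\psi'=2\alpha\psi$ and $\psi'''=8\alpha^3\psi$, this yields
\begin{equation*}
\frac{d}{dt}M+2\alpha\iint_\Sigma e^{2\alpha x}\bigl(3u_x^2+u_y^2+u_z^2\bigr)\,dxdydz=2\alpha b\,M+8\alpha^3M+\frac{4\alpha}{3}\iint_\Sigma e^{2\alpha x}u^3\,dxdydz .
\end{equation*}
Two ingredients control the right-hand side. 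First, the Poincaré inequality on the bounded domain $\Omega$ with homogeneous Dirichlet data, $\iint_\Omega v^2\,dydz\leq C_\Omega\iint_\Omega(v_y^2+v_z^2)\,dydz$ for $v\in H_0^1(\Omega)$, applied for a.e. fixed $x$ to $v=e^{\alpha x}u(x,\cdot,\cdot)$ and integrated in $x$, gives $\iint_\Sigma e^{2\alpha x}(u_y^2+u_z^2)\,dxdydz\geq C_\Omega^{-1}M$, where by Faber--Krahn (or direct scaling, or the explicit eigenvalues in the rectangular case) one may take $C_\Omega\leq c_0|\Omega|$ with $c_0$ independent of the shape of $\Omega$. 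Second, the cubic term is absorbed into the dissipation: putting $w=e^{\alpha x}u\in H_0^1(\Sigma)$, one has $\bigl|\iint_\Sigma e^{2\alpha x}u^3\bigr|\leq\|w\|_{L_4(\Sigma)}^2\,\|u\|_{L_2}$; the Gagliardo--Nirenberg embedding $H^1(\Sigma)\hookrightarrow L_4(\Sigma)$ combined with the above Poincaré inequality and the identity $\|\nabla w\|_{L_2}^2=\iint_\Sigma e^{2\alpha x}|Du|^2\,dxdydz-\alpha^2M$ bounds this by $c\,C_\Omega^{1/4}\|u\|_{L_2}\iint_\Sigma e^{2\alpha x}|Du|^2\,dxdydz$. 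Since the first conservation law in \eqref{1.4}, in the form of the energy inequality valid for $f\equiv0$, gives $\|u(t)\|_{L_2}\leq\|u_0\|_{L_2}\leq\epsilon_0$, choosing $\epsilon_0$ small enough (depending only on $C_\Omega$, hence admissibly on $|\Omega|$) makes the cubic term not exceed $\alpha\iint_\Sigma e^{2\alpha x}|Du|^2\,dxdydz$, which is dominated by the dissipation because $3u_x^2+u_y^2+u_z^2\geq|Du|^2$.

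Combining these, and reserving the other half of the dissipation, namely $\alpha\iint_\Sigma e^{2\alpha x}(u_y^2+u_z^2)\,dxdydz$, for the Poincaré estimate, I obtain $M'(t)\leq-\alpha\bigl(C_\Omega^{-1}-2b-8\alpha^2\bigr)M(t)$. If $b\leq0$ the bracket is positive once $8\alpha^2<C_\Omega^{-1}$, with no restriction on $|\Omega|$ (so $\Omega_0=+\infty$); if $b>0$ one needs in addition $C_\Omega^{-1}>2b$, which holds as soon as $|\Omega|<\Omega_0:=(2bc_0)^{-1}$ (up to the precise numerical constant). Fixing $\alpha_0>0$ small enough, depending on $C_\Omega$ and $b$, that $C_\Omega^{-1}-2b-8\alpha^2\geq2\beta>0$ for all $\alpha\in(0,\alpha_0]$, Gronwall's lemma gives $M(t)\leq e^{-2\alpha\beta t}M(0)$ on the approximating level, and passing to the limit and taking square roots yields \eqref{1.7}. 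The main obstacle is making the weighted energy identity entirely rigorous — justifying all the integrations by parts (boundary terms on $\partial\Sigma$ and at $x\to\pm\infty$) and controlling the error terms produced by the regularization as its parameter tends to zero — together with the bookkeeping in the absorption of the cubic term; it is precisely here that the boundedness of $\Omega$ and the homogeneous Dirichlet condition are essential, since they supply the Poincaré inequality that both creates the spectral gap driving the decay and allows the unweighted smallness $\|u_0\|_{L_2}\leq\epsilon_0$ (rather than smallness in a weighted norm) to tame the nonlinearity.
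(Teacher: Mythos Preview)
Your proposal is correct and follows essentially the paper's route: work at the level of the regularized approximations from the proof of Theorem~\ref{T1.1}, use the $L_2$ conservation law (equality \eqref{3.8}) to bound $\|u(t)\|_{L_2}\leq\|u_0\|_{L_2}$, derive the weighted energy identity with $\psi=e^{2\alpha x}$, absorb the cubic term using interpolation and smallness of $\|u_0\|_{L_2}$, invoke the Friedrichs/Poincar\'e inequality on $\Omega$ for the spectral gap, apply Gronwall, and pass to the limit $h\to0$. The only cosmetic difference is in the treatment of the cubic term: the paper bounds it via its weighted interpolation Lemma~\ref{L1.1} (inequality \eqref{3.12}) followed by Young's inequality, producing the factor $\|u_0\|_{L_2}+\|u_0\|_{L_2}^4$ in \eqref{5.4}, whereas you substitute $w=e^{\alpha x}u$, use the unweighted Gagliardo--Nirenberg inequality, and apply Poincar\'e a second time to get the cleaner linear bound $c\,C_\Omega^{1/4}\|u\|_{L_2}\int e^{2\alpha x}|Du|^2$---both variants lead to the same differential inequality and the same choice of $\Omega_0$, $\alpha_0$, $\epsilon_0$, $\beta$.
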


The proof of this result, in particular, is based on Friedrichs inequality and, therefore, homogeneous Dirichlet conditions are essential. The idea that under such conditions Zakahrov--Kuznetsov equation possesses certain internal dissipation, which provides decay of such a type, was found out in \cite{LT}. Stabilization of solutions to three-dimensional linearized ZK equation is studied in \cite{DL15}.

Further we use the following auxiliary functions. Let $\eta(x)$ denote a cut-off function, namely, $\eta$ is an infinitely smooth non-decreasing on $\mathbb R$ function such that $\eta(x)=0$ when $x\leq 0$, $\eta(x)=1$ when $x\geq 1$, $\eta(x)+\eta(1-x)\equiv 1$.

For each $\alpha\geq 0$ and $\beta>0$ we introduce an infinitely smooth increasing on $\mathbb R$ function $\varkappa_{\alpha,\beta}(x)$ as follows: 
$\varkappa_{\alpha,\beta}(x)=e^{2\beta x}$ when $x\leq -1$, $\varkappa_{\alpha,\beta}(x)=(1+x)^{2\alpha}$ for $\alpha>0$ and $\varkappa_{0,\beta}(x)=2-(1+x)^{-1/2}$ when $x\geq 0$, $\varkappa'_{\alpha,\beta}(x)>0$ when $x\in (-1,0)$. 

Note that both $\varkappa_{\alpha,\beta}$ and $\varkappa'_{\alpha,\beta}$ are admissible weight functions, and $\varkappa'_{\alpha,\beta}(x)\leq c(\alpha,\beta)\varkappa_{\alpha,\beta}(x)$ for all $x\in \mathbb R$. It is obvious that one can take $\rho_\alpha(x) \equiv 1+\varkappa_{\alpha,1}(x)$.

Note also that if $u\in X^{k,\alpha}(\Pi_T)$ for $\alpha\geq 1/2$, then $|D^{k+1}u|\varkappa_{\alpha-1/2,\beta}^{1/2}(x)\in L_2(\Pi_T)$ for any $\beta>0$.

Further we omit limits of integration in integrals over the whole strip $\Sigma$. We need the following interpolating inequality.

\begin{lemma}\label{L1.1}
Let $\psi_1(x)$, $\psi_2(x)$ be two admissible weight functions such that $\psi_1(x)\leq c_0\psi_2(x)$ 
$\forall x\in\mathbb R$ for some constant $c_0>0$. Let $k=1$ or $2$, $m\in [0,k)$ -- integer, $q\in [2,6]$ if $k-m=1$ and  $q\in [2,+\infty)$ if $k-m=2$. Then there exists a constant $c>0$ such that for every function $\varphi(x,y,z)$ satisfying  
$|D^k\varphi|\psi_1^{1/2}(x)\in L_2$, $\varphi\psi_2^{1/2}(x)\in L_2$, the following inequality holds
\begin{equation}\label{1.8}
\bigl\| |D^m\varphi|\psi_1^s(x)\psi_2^{1/2-s}(x)\bigr\|_{L_q} \leq c 
\bigl\| |D^k\varphi|\psi_1^{1/2}(x)\bigr\|^{2s}_{L_2}
\bigl\| \varphi\psi_2^{1/2}(x)\bigr\|^{1-2s}_{L_2} 
+\bigl\| \varphi\psi_2^{1/2}(x)\bigr\|_{L_2},
\end{equation}
where $\displaystyle{s=s(k,m,q)=\frac{2m+3}{4k}-\frac{3}{2kq}}$. If $\varphi\big|_{\partial\Sigma}=0$ and either $k=1$ or $k=2,$ $m=0,$ $q\leq 6$ or $k=2,$ $ m=1,$ $q=2$ then the constant $c$ in \eqref{1.8} does not depend on $\Omega$.
\end{lemma}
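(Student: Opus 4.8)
The plan is to localize in the longitudinal variable $x$ to slabs of unit length, on each of which every admissible weight is comparable to a positive constant, to invoke there the classical unweighted Gagliardo--Nirenberg inequality, and then to reassemble the pieces by a summation in $\ell^q$. The point of departure is the remark that if $\psi$ is admissible then $|(\ln\psi)'|=|\psi'|/\psi\le c(1)$, so there is a constant, depending only on $c(1)$, with $c^{-1}\psi(x')\le\psi(x)\le c\psi(x')$ whenever $x,x'$ lie in a common interval of length $2$; since products and real powers of admissible weights are admissible, this holds simultaneously for $\psi_1$, $\psi_2$ and $\psi_1^s\psi_2^{1/2-s}$. One checks that $0\le s\le1/2$ for every triple $(k,m,q)$ allowed in the statement and that $\psi_1^s\psi_2^{1/2-s}\le c_0^s\psi_2^{1/2}$, $\psi_1^s\psi_2^{1/2-s}\ge c_0^{-(1/2-s)}\psi_1^{1/2}$.

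For $n\in\mathbb Z$ set $Q_n=(n-1/2,n+1/2)\times\Omega$, so $\Sigma$ is, up to a null set, the disjoint union of the $Q_n$, and under the stated integrability hypotheses $\varphi\in H^k(Q_n)$ for each $n$. On the bounded Lipschitz domain $Q_0$ the classical Gagliardo--Nirenberg inequality, with its additive lower-order term and the scaling-dictated exponent $2s$, reads
$$
\bigl\|\,|D^m\varphi|\,\bigr\|_{L_q(Q_0)}\le c_*\Bigl(\bigl\|\,|D^k\varphi|\,\bigr\|_{L_2(Q_0)}^{2s}\|\varphi\|_{L_2(Q_0)}^{1-2s}+\|\varphi\|_{L_2(Q_0)}\Bigr),
$$
and by translation the same $c_*=c_*(\Omega)$ serves every $Q_n$. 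To make $c_*$ independent of $\Omega$ when $\varphi|_{\partial\Sigma}=0$ I would avoid an extension operator (whose norm depends on $\Omega$) and argue directly on slabs: after multiplying $\varphi$ by a fixed smooth cutoff $\chi_n(x)$ supported in $(n-1,n+1)$, equal to $1$ on $(n-1/2,n+1/2)$ and with uniformly bounded derivatives, the function $\chi_n\varphi$ vanishes on $\partial\Sigma$ and has compact $x$-support, hence extends by zero to an element of $H^1(\mathbb R^3)$ of equal norm; applying over $\mathbb R^3$ the homogeneous Gagliardo--Nirenberg inequality when $k=1$, or $H^1(\mathbb R^3)\hookrightarrow L^6$ interpolated with $L^2$ when $k=2$, $m=0$, $q\le6$, yields an absolute constant, while the cutoff regenerates precisely the admissible additive term over the enlarged slab. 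In the last case $k=2$, $m=1$, $q=2$ I would instead integrate by parts: using $\varphi|_{\partial\Sigma}=0$ and the compact $x$-support of $\chi_n$,
$$
\int\chi_n^2|D\varphi|^2=-\int\chi_n^2\varphi\,\Delta\varphi-2\int\chi_n\chi_n'\,\varphi\varphi_x,
$$
and a Young inequality absorbing the last integral gives $\|\chi_n|D\varphi|\|_{L_2}\le c\|\chi_n\varphi\|_{L_2}^{1/2}\bigl\|\,|D^2\varphi|\,\bigr\|_{L_2}^{1/2}+c\|\varphi\|_{L_2}$ over the enlarged slab with an absolute constant, matching $s(2,1,2)=1/4$; this same identity is also what lets me trade $|D^2\varphi|$ for $|D\varphi|$ in the $k=2$, $m=0$ case.

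Next I would reassemble. Put $a_n=\bigl\|\,|D^k\varphi|\psi_1^{1/2}\bigr\|_{L_2(Q_n)}$, $b_n=\|\varphi\psi_2^{1/2}\|_{L_2(Q_n)}$, so $\sum_n a_n^2=A^2$, $\sum_n b_n^2=B^2$ where $A$, $B$ are the two norms on the right of \eqref{1.8} (a harmless bounded overlap replaces the equalities in the $\Omega$-independent variant, where the cutoffs are kept). Using that on $Q_n$ the weights $\psi_1$, $\psi_2$, $\psi_1^s\psi_2^{1/2-s}$ are comparable to their central values and that $\bigl(\psi_1(n)^s\psi_2(n)^{1/2-s}\bigr)^q=\psi_1(n)^{sq}\,\psi_2(n)^{(1/2-s)q}$ splits correctly between the top-order and zeroth-order factors, the slab inequality becomes
$$
\bigl\|\,|D^m\varphi|\,\psi_1^s\psi_2^{1/2-s}\bigr\|_{L_q(\Sigma)}^q\le c\sum_n\bigl(a_n^{2sq}b_n^{(1-2s)q}+b_n^q\bigr).
$$
Since $q\ge2$ one has $\sum_n b_n^q\le\bigl(\sum_n b_n^2\bigr)^{q/2}=B^q$; and, normalizing $a_n$, $b_n$, H\"older's inequality with exponents $1/(2s)$, $1/(1-2s)$ (the endpoints $s=0$, $s=1/2$ being trivial), together with $\sum_n c_n^q\le\sum_n c_n^2$ for $0\le c_n\le1$, gives $\sum_n a_n^{2sq}b_n^{(1-2s)q}\le A^{2sq}B^{(1-2s)q}$. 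Taking $q$-th roots and using $(x+y)^{1/q}\le x^{1/q}+y^{1/q}$ produces \eqref{1.8}.

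I expect the main obstacle to be the coexistence of the two different weights $\psi_1$ (on $|D^k\varphi|$) and $\psi_2$ (on $\varphi$): a naive global substitution $\varphi\mapsto\varphi\,\psi_1^s\psi_2^{1/2-s}$ fails, since it would force the intermediate weight $\psi_1^s\psi_2^{1/2-s}$, rather than the required smaller $\psi_1^{1/2}$, onto the top-order term. Localization removes the difficulty because on a unit slab all three weights collapse to a single constant, leaving only the numerical split $\psi_1(n)^{sq}\psi_2(n)^{(1/2-s)q}$ and a clean H\"older summation; the secondary, more technical, hurdle is the $\Omega$-free constant, which is why the hands-on zero-extension and integration-by-parts arguments above take the place of a domain-dependent extension operator.
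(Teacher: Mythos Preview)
Your localization-plus-summation argument is correct and complete, and it is a genuinely different route from the paper's. The paper never localizes in $x$: for $k=1$ it applies a single global H\"older split
\[
\|\varphi\psi_1^s\psi_2^{1/2-s}\|_{L_q}\le\|\varphi\psi_1^{1/2}\|_{L_6}^{2s}\|\varphi\psi_2^{1/2}\|_{L_2}^{1-2s}
\]
(checking $1/q=2s/6+(1-2s)/2$), then invokes the Sobolev inequality $\|\Phi\|_{L_6}\le c\|\,|D\Phi|+|\Phi|\,\|_{L_2}$ directly on $\Phi=\varphi\psi_1^{1/2}$, using only $|\psi_1'|\le c\psi_1$ and $\psi_1\le c_0\psi_2$. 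The $k=2$ cases are then built by bootstrapping: the $m=1,\ q=2$ case by the same global integration-by-parts identity you wrote (with the trace estimate handling the boundary term when $\varphi|_{\partial\Sigma}\ne0$), the $m=1,\ q\in(2,6]$ case by H\"older between $L_6$ and $L_2$ of $|D\varphi|$, and the $m=0$ case by composing the $k=1$ estimate with the $k=2,\ m=1,\ q=2$ estimate when $q\le6$, while for $q>6$ the paper uses a nonlinear substitution $\widetilde\varphi=|\varphi|^{\varkappa}\sgn\varphi\cdot\psi_1^{1/2}\psi_2^{(\varkappa-1)/2}$ with a carefully chosen $\varkappa>1$ and applies $W^{1,p}\hookrightarrow L^{p^*}$.

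So the contrast is: the paper treats the two weights by a \emph{global} H\"older decomposition of the mixed weight $\psi_1^s\psi_2^{1/2-s}$ and then differentiates the product $\varphi\psi_1^{1/2}$, paying for this directness with case-by-case bookkeeping (and the somewhat ad hoc $\widetilde\varphi$ trick for $q>6$); you instead freeze the weights on unit slabs, apply unweighted Gagliardo--Nirenberg once, and recover the global estimate by an $\ell^q$ H\"older summation. Your method is more uniform (one mechanism for all $(k,m,q)$, including $q>6$ without the substitution), and the way you handle the $\Omega$-independent constants---zero extension to $\mathbb R^3$ for $k=1$, integration by parts on cut-off slabs for $k=2,\ m=1,\ q=2$, and composition for $k=2,\ m=0,\ q\le6$---matches the paper's ingredients but assembled locally rather than globally. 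Both proofs ultimately rest on the same two facts (the derivative bound $|\psi'|\le c\psi$ and the ordering $\psi_1\le c_0\psi_2$); the paper exploits them pointwise inside one global inequality, you exploit them through the equivalence of weights on unit slabs.
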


\begin{proof}
Let first $k=1$. The proof is based on the well-known inequality (of course, it is valid for more general domains): for 
$\varphi\in W^1_p$, $p\in [1,3)$, $p^*=3p/(3-p)$
\begin{equation}\label{1.9}
\|\varphi\|_{L_{p^*}} \leq c \bigl\||D\varphi|+|\varphi|\bigr\|_{L_p},
\end{equation}
where the constant $c$ does not depend on $\Omega$ in the case $\varphi\in H^1_0$ (see, for example, \cite{BIN,LSU}).
Then for $q\in [2,6]$ H\"older inequality yields that
$$
\|\varphi \psi_1^s \psi_2^{1/2-s}\|_{L_q} \leq \|\varphi\psi_1^{1/2}\|_{L_6}^{2s} 
\|\varphi \psi_2^{1/2}\|_{L_2}^{1-2s},
$$
whence with the use of \eqref{1.9} for $p=2$ and the properties of the functions $\psi_1$ and $\psi_2$ the desired estimate follows.

Next, let $k=2,$ $m=1,$ $q=2$. Integration by parts yields an equality
\begin{multline*}
\iiint |D\varphi|^2\psi_1^{1/2}\psi_2^{1/2}\,dxdydz = 
-\iiint \Delta\varphi \psi_1^{1/2} \cdot \varphi\psi_2^{1/2}\,dxdydz  \\
-\iiint \varphi\varphi_x (\psi_1^{1/2}\psi_2^{1/2})'\,dxdydz +
\iint_{\partial\Sigma} \varphi (\varphi_y n_y +\varphi_z n_z)\psi_1^{1/2}\psi_2^{1/2} dS,
\end{multline*}
where $(n_y,n_z)$ is the exterior normal vector to $\Omega$. If $\varphi\big|_{\partial\Sigma}=0$ this equality immediately provides \eqref{1.8}, while in the general case one must also use for functions $\Phi\equiv \varphi^2$ and $\Phi\equiv\varphi_y^2$ or $\Phi\equiv \varphi_z^2$ the following well-known estimate on the trace (see, for example, \cite{BIN}):
$$
\|\Phi\|_{L_1(\partial\Omega)} \leq c \bigl\| |\Phi_y|+|\Phi_z|+|\Phi| \bigr\|_{L_1(\Omega)}.
$$

If $k=2,$ $m=1,$ $q\in (2,6]$ let $\sigma= \displaystyle \frac 32 -\frac 3q$, then
$$
\bigl\| |D\varphi|\psi_1^s\psi_2^{1/2-s}\bigr\|_{L_q} \leq
\bigl\| |D\varphi|\psi_1^{1/2}\bigr\|_{L_6}^\sigma 
\bigl\||D\varphi|\psi_1^{1/4}\psi_2^{1/4} \bigr\|_{L_2}^{1-\sigma}
$$
and with the use of the already obtained estimates \eqref{1.8} for $k=1$ applied to $|D\varphi|$ and for $k=2,$ $m=1,$ $q=2$ derive \eqref{1.8} in this case.

Finally, let $k=2,$ $m=0$. If $q\leq 6$ then with the use of \eqref{1.8} for $k=1$ (where $\psi_1$ is substituted by
$\psi_1^{1/2}\psi_2^{1/2}$) and for $k=2,$ $m=1,$ $q=2$ we derive that
\begin{multline*}
\|\varphi \psi_1^s\psi_2^{1/2-s}\|_{L_q} =
\| \varphi(\psi_1^{1/2}\psi_2^{1/2})^{s(1,0,q)} \psi_2^{1/2-s(1.0,q)}\|_{L_q}  \\  \leq
c\bigl\| |D\varphi|\psi_1^{1/4}\psi_2^{1/4}\bigr\|_{L_2}^{2s(1,0,q)}
\|\varphi\psi_2^{1/2}\|_{L_2}^{1-2s(1,0,q)} +c\|\varphi\psi_2^{1/2}\|_{L_2} \\ \leq
c_1\bigl\| |D^2\varphi|\psi_1^{1/2}\bigr\|_{L_2}^{2s}
\|\varphi\psi_2^{1/2}\|_{L_2}^{1-2s} +c_1\|\varphi\psi_2^{1/2}\|_{L_2}.
\end{multline*}
If $q\in (6,+\infty)$ choose $p\in (2,3)$ satisfying $q=p^*$, $\varkappa=1+(6-p)/(3p)$ and $\theta\in (5/6,1)$ satisfying
$\displaystyle \frac 1q = \frac \theta{\varkappa q} +\frac{1-\theta}2$ (of course, all these parameters can be expressed explicitly) and define
$$
\widetilde\varphi \equiv |\varphi|^\varkappa \cdot \sgn\varphi\cdot \psi_1^{1/2}\psi_2^{(\varkappa-1)/2}.
$$
It is easy to see that $s=\displaystyle\frac \theta{2\varkappa}$ and thus
$$
\|\varphi\psi_1^s\psi_2^{1/2-s}\|_{L_q} =
\|\widetilde\varphi^{\theta/\varkappa}\cdot (\varphi\psi_2^{1/2})^{1-\theta}\|_{L_q}
\leq \|\widetilde\varphi\|_{L_q}^{\theta/\varkappa}
\|\varphi\psi_2^{1/2}\|_{L_2}^{1-\theta}.
$$
Applying inequality \eqref{1.9} to the function $\widetilde\varphi$ we find that
\begin{multline*}
\|\widetilde\varphi\|_{L_q} \leq c\bigl\||D\widetilde\varphi|+|\widetilde\varphi|\bigr\|_{L_p} \leq
c_1\bigl\|(|D\varphi|+|\varphi|)\psi_1^{1/2}\cdot (|\varphi|\psi_2^{1/2})^{\varkappa-1}\bigr\|_{L_p} \\ \leq
c_2\bigl\|(|D\varphi|+|\varphi|)\psi_1^{1/2}\bigr\|_{L_6} 
\|\varphi\psi_2^{1/2}\|_{L_2}^{\varkappa-1}.
\end{multline*}
Applying inequality \eqref{1.8} in the case $k=1$ to the function $|D\varphi|+|\varphi|$ we finish the proof.
\end{proof}

\begin{remark}\label{R1.2}
In the case $\psi_1=\psi_2\equiv 1$ inequality \eqref{1.8} is well-known (see, for example, \cite{BIN, LSU}. For the weighted spaces in the case $\Sigma=\mathbb R^3$ it was proved in \cite{F89} (in fact, in that paper the spatial dimension and natural $k$ were arbitrary).
\end{remark}

\begin{remark}\label{R1.3}
The constant $c$ in the right side of \eqref{1.8} depends on the corresponding constants evaluating the derivatives of the functions $\psi_1$ and $\psi_2$ by these functions themselves and the constant $c_0$ evaluating $\psi_1$ by $\psi_2$.
\end{remark}

For certain multi-index $\nu=(\nu_1,\nu_2)$ let $\partial^\nu_{y,z} =
\partial^{\nu_1}_y\partial^{\nu_2}_z$, $|\nu|=\nu_1+\nu_2$. Let $\Delta^{\bot} =\partial^2_y+\partial^2_z$, 
$\Delta = \partial^2_x +\Delta^{\bot}$.

The paper is organized as follows. An auxiliary linear problem is considered in Section~\ref{S2}. Section~\ref{S3} is dedicated to problems on existence of solutions to the original problem. Results on continuous dependence of solutions on $u_0$ and $f$ are proved in Section~\ref{S4}. In particular, they imply uniqueness of the solution. Section~\ref{S5} is devoted to the large-time decay of solutions. The initial value problem is considered in Section~\ref{S6}.

\section{An auxiliary linear equation}\label{S2}

Consider a linear equation 
\begin{equation}\label{2.1}
u_t+bu_x+\Delta u_x-\delta \Delta u=f(t,x,y,z)
\end{equation}
for a certain constant $\delta\in [0,1]$.

\begin{lemma}\label{L2.1}
Let $(1+|x|)^n \partial^j_x\partial^\nu_{y,z} u_0\in L_2$ for any integer non-negative $n$, $j$ and $|\nu|\leq 3$, $u_0\big|_{\partial\Sigma}=\Delta^{\bot}u_0\big|_{\partial\Sigma}=0$, 
$(1+|x|)^n \partial^m_t \partial^j_x\partial^\nu_{y,z}f\in L_1(0,T; L_2)$ for any integer $n$, $j$ and 
$2m+|\nu|\leq 3$, $f\big|_{(0,T)\times\partial\Sigma}=\Delta^{\bot} f\big|_{(0,T)\times\partial\Sigma}=0$.
Then there exists a solution to problem \eqref{2.1}, \eqref{1.2}, \eqref{1.3} $u(t,x,y,z)$ such that $(1+|x|)^n \partial^m_t \partial^j_x\partial^\nu_{y,z}u\in C([0,T]; L_2)$ for any integer $n$, $j$ and 
$2m+|\nu|\leq 3$, $\Delta^{\bot} u\big|_{(0,T)\times\partial\Sigma}=0$.
\end{lemma}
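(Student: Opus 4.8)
The plan is to construct the solution explicitly by separating variables in the transverse section $\Omega$ and applying the Fourier transform in~$x$.

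\textbf{Step 1 (spectral decomposition in $(y,z)$).} Under either assumption on $\Omega$ the operator $-\Delta^{\bot}$ with homogeneous Dirichlet condition on $\partial\Omega$ has a complete orthonormal system of eigenfunctions $\{\omega_k\}_{k\geq1}$ in $L_2(\Omega)$ with eigenvalues $0<\lambda_1\leq\lambda_2\leq\dots\to+\infty$; by elliptic regularity in the $C^3$ case and explicitly ($\omega_k=\sin(\pi m y/L_1)\sin(\pi n z/L_2)$) for the rectangle, each $\omega_k$ is smooth up to $\partial\Omega$, and since $\Delta^{\bot}\omega_k=-\lambda_k\omega_k$ it satisfies $\omega_k|_{\partial\Omega}=\Delta^{\bot}\omega_k|_{\partial\Omega}=0$. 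Write $u_0=\sum_k c_k(x)\omega_k$, $f=\sum_k g_k(t,x)\omega_k$ and seek $u=\sum_k v_k(t,x)\omega_k$. Here $c_k(x)=\langle u_0(x,\cdot,\cdot),\omega_k\rangle_{L_2(\Omega)}$, and for a.e.\ $x$ both $u_0(x,\cdot,\cdot)$ and $\Delta^{\bot}u_0(x,\cdot,\cdot)$ lie in $H^1_0(\Omega)$ (this is exactly what the conditions $u_0|_{\partial\Sigma}=\Delta^{\bot}u_0|_{\partial\Sigma}=0$ give, together with square-integrability in the transverse variables), so interpolating $\|(-\Delta^{\bot})^{a/2}\cdot\|_{L_2(\Omega)}$ for $a=0,1,2,3$ yields $\sum_k(1+\lambda_k)^{3}\|(1+|x|)^n\partial_x^j c_k\|_{L_2(\mathbb R)}^2<\infty$ for all $n,j$, and likewise (using $f,\Delta^{\bot}f$ vanishing on $\partial\Sigma$ and that $f,\partial_t f\in L_1$ in $t$, hence $f\in C([0,T];\cdot)$) $\int_0^T\bigl(\sum_k(1+\lambda_k)^{3-2m}\|(1+|x|)^n\partial_x^j\partial_t^m g_k(t)\|_{L_2(\mathbb R)}^2\bigr)^{1/2}dt<\infty$ for $2m+|\nu|\leq3$.

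\textbf{Step 2 (the one-dimensional problems).} Projecting \eqref{2.1} onto $\omega_k$ gives for each $k$ the constant-coefficient equation $\partial_t v_k+bv_k'+v_k'''-\lambda_k v_k'-\delta v_k''+\delta\lambda_k v_k=g_k$ on $\mathbb R$, $v_k|_{t=0}=c_k$, solved by $\widehat v_k(t,\xi)=E_k(t,\xi)\widehat c_k(\xi)+\int_0^t E_k(t-s,\xi)\widehat g_k(s,\xi)\,ds$ with $E_k(t,\xi)=\exp(-t P_k(\xi))$, $P_k(\xi)=\delta(\xi^2+\lambda_k)+i(b\xi-\xi^3-\lambda_k\xi)$, so $|E_k(t,\xi)|=e^{-\delta t(\xi^2+\lambda_k)}\leq1$. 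The key elementary fact is that every $\xi$-derivative of $E_k$ is bounded uniformly in $\xi\in\mathbb R$, $k\geq1$ and $t\in[0,T]$: $\partial_\xi^\ell E_k$ equals $E_k$ times a finite sum of terms $c\,t^a\xi^b(\xi^2+\lambda_k)^d$ with $d\leq a\leq\ell$, $b\leq2\ell$, and each is bounded by $|E_k|$ together with $(ts)^de^{-\delta ts}\leq(d/(\delta e))^d$ (with $s=\xi^2+\lambda_k$), $(t\xi^2)^{b/2}e^{-\delta t\xi^2}\leq(b/(2\delta e))^{b/2}$ and $t^{a-d}\leq T^\ell$. By Plancherel this gives, for all $n,j$, $\|(1+|x|)^n\partial_x^j v_k(t)\|_{L_2(\mathbb R)}\leq C\bigl(\|(1+|x|)^n\partial_x^j c_k\|_{L_2(\mathbb R)}+\int_0^t\|(1+|x|)^n\partial_x^j g_k(s)\|_{L_2(\mathbb R)}\,ds\bigr)$ with $C=C(n,j,\delta,T,b)$ independent of $k$, and continuity of $t\mapsto v_k(t)$ into these weighted spaces, including at $t=0$, by dominated convergence in $\xi$; $\partial_t v_k$ is read off from the equation.

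\textbf{Step 3 (summation and verification).} Set $u=\sum_k v_k(t,x)\omega_k(y,z)$. On finite partial sums $u^N$, which lie in $C^\infty(\overline\Omega)$ and satisfy both boundary conditions, one has $\|\partial^\nu_{y,z}u^N\|_{L_2(\Omega)}^2\lesssim\|u^N\|_{L_2(\Omega)}^2+\|(-\Delta^{\bot})^{|\nu|/2}u^N\|_{L_2(\Omega)}^2$ for $|\nu|\leq3$, whence $\|(1+|x|)^n\partial_x^j\partial^\nu_{y,z}u^N(t)\|_{L_2(\Sigma)}^2\lesssim\sum_{k\leq N}(1+\lambda_k)^{|\nu|}\|(1+|x|)^n\partial_x^j v_k(t)\|_{L_2(\mathbb R)}^2$; by Steps 1--2 the partial sums are Cauchy in $C([0,T];L_2)$ uniformly in $t$ (the $\partial_t$'s handled by differentiating \eqref{2.1} in $t$, each $\partial_t$ raising the transverse order by at most $2$ and bringing in the corresponding $t$-derivative of $f$). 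Thus $(1+|x|)^n\partial_x^j\partial_t^m\partial^\nu_{y,z}u\in C([0,T];L_2)$ for $2m+|\nu|\leq3$; in particular $u\in C([0,T];H^1_0)$, so $u|_{(0,T)\times\partial\Sigma}=0$, and $\Delta^{\bot}u=-\sum_k\lambda_k v_k\omega_k$ is a $C([0,T];H^1(\Sigma))$-limit of $H^1_0$-functions (its $H^1$-norm squared being $\asymp\sum_k(\lambda_k^2+\lambda_k^3)\|v_k\|_{L_2(\mathbb R)}^2<\infty$), hence $\Delta^{\bot}u|_{(0,T)\times\partial\Sigma}=0$. Finally each $v_k\omega_k$ satisfies the projected equation and all the relevant series converge in $C([0,T];L_2)$, so $u$ satisfies \eqref{2.1}, \eqref{1.2}, \eqref{1.3}.

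\textbf{Expected main obstacle.} The heart of the argument is the uniform-in-$k$ control of the $\xi$-derivatives of the propagators $E_k$ on $[0,T]$: this is where the dissipative term $-\delta\Delta u$ is used, the bound $\mathrm{Re}\,P_k(\xi)\geq\delta(\xi^2+\lambda_k)$ being precisely what prevents the factors $\lambda_k$ generated by the transport-type term $-\lambda_k v_k'$ and by multiplication by the weights $(1+|x|)^n$ from destroying the summation over the transverse modes or continuity at $t=0$. The remaining difficulty is purely bookkeeping: matching the three admissible transverse derivatives of the data to the power $(1+\lambda_k)^{3}$ that the coefficient series can afford, which is exactly what the two compatibility conditions on $u_0$ and $f$ are calibrated to provide.
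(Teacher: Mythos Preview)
Your construction---eigenfunction expansion in $\Omega$ combined with the Fourier transform in $x$, leading to the explicit propagator $E_k(t,\xi)=\exp\bigl(i(\xi^3-b\xi+\lambda_k\xi)t-\delta(\xi^2+\lambda_k)t\bigr)$---is exactly the paper's approach; the paper's proof consists of the formula \eqref{2.7} together with a one-line appeal to the spectral characterizations \eqref{2.3}--\eqref{2.6}.

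There is, however, a genuine gap in your argument. Your Step~2 bounds $\partial_\xi^{\ell}E_k$ uniformly in $\xi,k,t$ by exploiting the damping factor $e^{-\delta t(\xi^2+\lambda_k)}$, and your ``Expected main obstacle'' paragraph says explicitly that $\delta>0$ is what prevents the $\lambda_k$'s from destroying the mode summation. But the lemma is stated for all $\delta\in[0,1]$, and the case $\delta=0$ is actually used later (in the H\"olmgren argument of Lemma~\ref{L2.3} and in the limiting procedures of Lemmas~\ref{L2.4}--\ref{L2.5}). When $\delta=0$ the multiplier $E_k$ is unimodular and $\partial_\xi^{\ell}E_k$ contains terms of size $t^{\ell}\lambda_k^{\ell}$, so your uniform bound collapses. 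The way around this is not to bound $\partial_\xi^{\ell}E_k$ by itself but to throw the polynomial factors onto the data: each power of $\xi$ costs one $x$-derivative of $c_k$ (available in unlimited supply by hypothesis), and each power of $\lambda_k$ costs two transverse derivatives.

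It is worth noting that the paper's own proof is only a sketch and does not address this point either; indeed, with merely $|\nu|\le3$ on the data the trade-off above does not obviously close for $\delta=0$ and large $n$ (heuristically, the $l$-th transverse mode is transported in $x$ with speed $\sim\lambda_l$, so the $x$-weight $(1+|x|)^n$ produces a factor $(1+\lambda_l t)^n$). In every place the paper invokes Lemma~\ref{L2.1}, however, the data are taken in $C_0^\infty$, for which $\sum_l\lambda_l^{N}\|c_l\|_{L_2(\mathbb R)}^2<\infty$ for all $N$ and the difficulty disappears. Your write-up should either restrict to $\delta>0$, or replace the uniform multiplier bound by the data-absorption argument and note the extra transverse decay that $C_0^\infty$ data provide.
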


\begin{proof}
Let $\{\psi_l(y,z)$, $l=1,2\dots\}$ be an orthonormal in $L_2(\Omega)$ system of eigenfunctions for the operator $-\Delta^{\bot}$ on $\Omega$ with boundary conditions   $\psi_l\big|_{\partial\Omega}=0$, $\lambda_l$ -- the corresponding eigenvalues. It is known (see, for example \cite{Mikh}) that such a system exists and satisfy the following properties: $\lambda_l>0$ $\forall l$, $\lambda_l\to +\infty$ when $l\to +\infty$, $\psi_l\in H^3(\Omega)$,
$\psi_l\big|_{\partial\Omega}=\Delta^{\bot}\psi_l\big|_{\partial\Omega}=0$ $\forall l$ and these functions are real-valued. If $(\varphi,\psi_l)$ denotes the scalar product in $L_2(\Omega)$ then for any 
$\varphi\in L_2(\Omega)$
\begin{equation}\label{2.2}
\varphi =\sum\limits_{l=1}^{+\infty} (\varphi,\psi_l)\psi_l.
\end{equation}
If $\varphi \in H_0^1(\Omega)$ or $\varphi \in H^2(\Omega)\cap H_0^1(\Omega)$ then this series converges in these spaces respectively. Moreover,
\begin{gather}\label{2.3}
\varphi \in H_0^1(\Omega) \Longleftrightarrow \sum\limits_{l=1}^{+\infty} \lambda_l (\varphi,\psi_l)^2 <+\infty,
\quad \sum\limits_{l=1}^{+\infty} \lambda_l (\varphi,\psi_l)^2 =\|\varphi\|^2_{H_0^1(\Omega)}, \\
\label{2.4}
\varphi \in H^2(\Omega)\cap H_0^1(\Omega) \Longleftrightarrow 
\sum\limits_{l=1}^{+\infty} \lambda_l^2 (\varphi,\psi_l)^2 <+\infty, \quad
\sum\limits_{l=1}^{+\infty} \lambda_l^2 (\varphi,\psi_l)^2 \sim \|\varphi\|^2_{H^2(\Omega)}.
\end{gather}
The last inequality is the particular case of an inequality valid for any function $\varphi \in H^2(\Omega)\cap H_0^1(\Omega)$:
\begin{equation}\label{2.5}
\|\varphi\|_{H^2(\Omega)} \leq c(\Omega)\|\Delta^{\bot}\varphi\|_{L_2(\Omega)},
\end{equation}
where the constant $c$ depends on the domain $\Omega$. Moreover, for any function $\varphi\in H^3(\Omega)$, such that $\varphi\big|_{\partial\Omega}=\Delta^{\bot} \varphi\bigl|_{\partial\Omega}=0$,
series \eqref{2.2} converges in this space and similarly to \eqref{2.3}, \eqref{2.4} 
\begin{multline}\label{2.6}
\varphi\in H^3(\Omega), \varphi\big|_{\partial\Omega}=\Delta^{\bot} \varphi\bigl|_{\partial\Omega}=0
\Longleftrightarrow \sum\limits_{l=1}^{+\infty} \lambda_l^3 (\varphi,\psi_l)^2 <+\infty, \\
\sum\limits_{l=1}^{+\infty} \lambda_l^3 (\varphi,\psi_l)^2 \sim \|\varphi\|^2_{H^3(\Omega)}.
\end{multline}
For example, in the case $\Omega=(0,L_1)\times (0,L_2)$ these eigenfunctions are written in a simple form
 $\displaystyle \left\{\frac 2{\sqrt{l_1l_2}} \sin\frac{\pi l_1 y}{L_1} \sin\frac{\pi l_2 z}{L_2}, l_1,l_2=1,2,\dots\right\}$.

Then with the use of Fourier transform for the variable $x$ and Fourier series for the variables $y, z$  a solution to  problem \eqref{2.1}, \eqref{1.2}, \eqref{1.3} can be written as follows:
\begin{equation}\label{2.7}
u(t,x,y,z)=\frac{1}{2\pi}\int\limits_{\mathbb R}\sum_{l=1}^{+\infty}e^{i\xi x}\psi_l(y,z)\widehat{u}(t,\xi,l)\,d\xi,
\end{equation}
where
\begin{multline*}
\widehat{u}(t,\xi,l)=\widehat{u_0}(\xi,l)e^{\bigl(i(\xi^3-b\xi+\xi\lambda_l)
-\delta(\xi^2+\lambda_l)\bigr)t} \\+
\int_0^t\widehat{f}(\tau,\xi,l)e^{\bigl(i(\xi^3-b\xi+\xi\lambda_l)-\delta(\xi^2+\lambda_l)\bigr)(t-\tau)}\,d\tau,
\end{multline*}
$$
\widehat{u_0}(\xi,l)\equiv\iiint e^{-i\xi x}\psi_l(y,z)u_0(x,y,z)\,dxdydz,
$$
$$
\widehat f(t,\xi,l)\equiv\iiint e^{-i\xi x}\psi_l(y,z) f(t,x,y,z)\,dxdydz.
$$
According to \eqref{2.3}--\eqref{2.6} and the properties of the functions $u_0$ and $f$ the function $u$ is the desired solution.
\end{proof}

\begin{lemma}\label{L2.2}
Let the hypothesis of Lemma~\ref{L2.1} be satisfied and, in addition, $\partial_x^j \partial_{y,z}^\nu u_0e^{\alpha x} \in L_{2,+}$, $\partial_x^j \partial_{y,z}^\nu fe^{\alpha x}\in L_2(0,T; L_{2,+})$ for any $\alpha>0$, 
$j\geq 0$ and $|\nu|\leq 1$. Then $\partial_x^j \partial_{y,z}^\nu u e^{\alpha x} \in C([0,T]; L_{2,+})$ if 
$|\nu|\leq1$, $\partial^m_t\partial_x^j \partial_{y,z}^\nu u e^{\alpha x} \in L_2(0,T;L_{2,+})$ if $2m+|\nu|= 2$ also for any $\alpha>0$ and $j\geq 0$, where $u$ is the solution to problem \eqref{2.1}, \eqref{1.2}, \eqref{1.3} constructed in Lemma~\ref{L2.1}.
\end{lemma}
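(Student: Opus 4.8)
The plan is to derive the stated decay by weighted energy estimates. As a weight take the admissible function $\rho(x)=1+e^{2\alpha x}$: it is non-decreasing, bounded below by $1$, satisfies $|\rho^{(l)}(x)|\le c(\alpha,l)\rho(x)$ for every $l$, grows like $e^{2\alpha x}$ as $x\to+\infty$ and stays bounded as $x\to-\infty$, so that $\varphi\rho^{1/2}\in L_2$ is equivalent to ``$\varphi\in L_2$ and $\varphi e^{\alpha x}\in L_{2,+}$''. To legitimize the manipulations one first works with bounded truncations $\rho_N$ (equal to $\rho$ for $x\le N$, constant for $x\ge N+1$, non-decreasing, with $|\rho_N^{(l)}|\le c(\alpha,l)\rho_N$ \emph{uniformly in $N$}): since by Lemma~\ref{L2.1} the solution $u$ and all its derivatives decay in $x$ faster than any power of $|x|$, no boundary terms at $x=\pm\infty$ appear when $\rho_N$ is used, while on $(0,T)\times\partial\Sigma$ one has $u=0$ and $\Delta^{\bot}u=0$; after each estimate is obtained with $\rho_N$ one passes to the limit $N\to\infty$ by monotone convergence. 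Since $\partial_x$ is tangential to $\partial\Sigma$, for every integer $j\ge0$ the function $w=\partial^j_x u$ solves \eqref{2.1} with $f$ replaced by $\partial^j_x f$ and with the same homogeneous conditions $w|_{(0,T)\times\partial\Sigma}=\Delta^{\bot}w|_{(0,T)\times\partial\Sigma}=0$, so it suffices to obtain the weighted bounds for $w$.

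\emph{Zero-order estimate.} Multiply the equation for $w$ by $2w\rho_N$ and integrate over $\Sigma$; integration by parts (the transverse ones using only $w=0$ on $\partial\Sigma$) yields an identity of the form
\begin{multline*}
\frac{d}{dt}\iiint w^2\rho_N+2\delta\iiint|Dw|^2\rho_N+3\iiint w_x^2\rho_N'+\iiint(w_y^2+w_z^2)\rho_N' \\
=b\iiint w^2\rho_N'+\delta\iiint w^2\rho_N''+\iiint w^2\rho_N'''+2\iiint w\,\partial^j_x f\,\rho_N .
\end{multline*}
Since $0\le\delta\le1$, $\rho_N'\ge0$ and $|\rho_N^{(l)}|\le c(\alpha)\rho_N$, the right-hand side is at most $c(\alpha,b)\|w\rho_N^{1/2}\|_{L_2}^2+2\|w\rho_N^{1/2}\|_{L_2}\|\partial^j_x f\,\rho_N^{1/2}\|_{L_2}$, while $\|\partial^j_x f\,\rho_N^{1/2}\|_{L_1(0,T;L_2)}\le c\|\partial^j_x f\|_{L_1(0,T;L_2)}+\|\partial^j_x f\,e^{\alpha x}\|_{L_1(0,T;L_{2,+})}<\infty$ by the hypotheses. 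Gronwall's lemma bounds $\sup_{[0,T]}\|w(t)\rho_N^{1/2}\|_{L_2}$ and, after integration of the identity in $t$, $\iiint|Dw|^2\rho_N'$ over $(0,T)\times\Sigma$, uniformly in $N$; letting $N\to\infty$ gives $\partial^j_x u\,e^{\alpha x}\in L_\infty(0,T;L_{2,+})$ and $|D\partial^j_x u|\,e^{\alpha x}\in L_2(0,T;L_{2,+})$ for all $j\ge0$.

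\emph{First-order estimate, the term $u_t$, and time-continuity.} For the assertions with $|\nu|\ge1$ one more level is needed. Multiplying the equation for $w$ by $-2\Delta^{\bot}w\,\rho_N$ and integrating, using $w=0$ and $\Delta^{\bot}w=0$ on $\partial\Sigma$, produces $\frac{d}{dt}\iiint(w_y^2+w_z^2)\rho_N$ together with the non-negative dissipative terms carrying the factor $\delta$ and the ``dispersive'' good terms $\iiint\big((\Delta^{\bot}w)^2+w_{xy}^2+w_{xz}^2\big)\rho_N'$, against a right-hand side treated as before; the new point is that the forcing enters as $-2\iiint\Delta^{\bot}w\,\partial^j_x f\,\rho_N$, which after integration by parts in $y,z$ involves only first $y,z$-derivatives of $f$ and is controlled because by hypothesis $\partial^j_x\partial^\nu_{y,z}f\,e^{\alpha x}\in L_2(0,T;L_{2,+})$ for $|\nu|\le1$ — precisely why the hypothesis is stated for $|\nu|\le1$. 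Gronwall and $N\to\infty$ give, after multiplication by $e^{\alpha x}$, $w_y,w_z\in L_\infty(0,T;L_{2,+})$ and $\Delta^{\bot}w,w_{xy},w_{xz}\in L_2(0,T;L_{2,+})$; using \eqref{2.5} on $\Omega$ to recover all transverse second derivatives from $\Delta^{\bot}w$, and the zero-order estimate applied to suitable $\partial^k_x u$ for the remaining $x$-derivatives, one obtains $|D^2\partial^j_x u|\,e^{\alpha x}\in L_2(0,T;L_{2,+})$, which contains the cases $2m+|\nu|=2$, $m=0$. The case $m=1$ follows from the equation $\partial^j_x u_t=\partial^j_x f-b\,\partial^{j+1}_x u-\partial^{j+1}_x\Delta u+\delta\,\partial^j_x\Delta u$: each summand, multiplied by $e^{\alpha x}$, lies in $L_2(0,T;L_{2,+})$ by the hypothesis (for $f$) and by the zero- and first-order estimates applied to suitable $\partial^k_x u$ with $k\le j+3$ (for the derivatives of $u$). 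Finally, to upgrade $L_\infty(0,T;L_{2,+})$ to $C([0,T];L_{2,+})$ for $\partial^j_x u$ and its first $y,z$-derivatives one uses that these $L_\infty$-bounds hold for \emph{every} $\alpha>0$: since those functions are continuous $[0,T]\to L_2$ by Lemma~\ref{L2.1}, and $\|\varphi e^{\alpha x}\|_{L_2(\{x>R\}\cap\Sigma_+)}\le e^{-(\alpha'-\alpha)R}\|\varphi e^{\alpha'x}\|_{L_{2,+}}$ for any $\alpha'>\alpha$, the weighted $x$-tails are small uniformly in $t$, whence strong continuity in $L_{2,+}$ follows.

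The only routine ingredient is the integrations by parts in the two energy identities. The points that need care are: carrying out the truncation-and-limit procedure rigorously, in particular keeping the constants in the $\rho_N$-estimates independent of $N$; the bookkeeping of which boundary conditions and which facts from Lemma~\ref{L2.1} are used at each order; and the continuity upgrade via the $\alpha$-uniform tail bound.
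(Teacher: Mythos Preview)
Your argument is correct, and it reaches the same conclusion as the paper, but the mechanism is genuinely different. The paper does \emph{not} truncate the weight spatially; instead it works on $\Sigma_+$ with the polynomial weights $x^m$, multiplies the resulting identities \eqref{2.8} by $\alpha^m/m!$ and sums over $m$ up to $n$, so that the partial Taylor sums $P_n(t)=\iiint_{\Sigma_+}\sum_{m\le n}\frac{(\alpha x)^m}{m!}v^2$ satisfy a differential inequality with constants independent of $n$; letting $n\to\infty$ recovers the exponential weight. For time-continuity the paper then redoes the estimate directly with the weight $e^{\alpha x}$, obtaining an ODE $R'(t)-(\alpha^3+\delta\alpha^2)R(t)=g(t)\in L_1(0,T)$ for $R(t)=\iiint e^{\alpha x}v^2$, which forces $R\in C[0,T]$. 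Your route---spatial truncation $\rho_N$ of the admissible weight $1+e^{2\alpha x}$, plus the $\alpha'>\alpha$ tail trick for continuity---is more standard and avoids the Taylor-series bookkeeping; the only delicate point (which you flagged) is arranging $|\rho_N^{(l)}|\le c(\alpha,l)\rho_N$ uniformly in $N$, but this is straightforward. Conversely, the paper's Taylor-sum device exploits the polynomial decay already supplied by Lemma~\ref{L2.1} and needs no truncation construction. Both the first-order step (multiplication by $-2\Delta^\bot w$ times the weight, then \eqref{2.5}) and the recovery of $u_t$ from the equation are handled identically in the two proofs.
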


\begin{proof}
Let $v\equiv \partial^j_x u$, then the function $v$ satisfies an equation of \eqref{2.1} type, where $f$ is replaced by $\partial^j_x f$. Let $m\geq 3$. Multiplying this equation by $2x^m v$ and integrating over $\Sigma_+$, we derive an equality 
\begin{multline}\label{2.8}
\frac{d}{dt}\iiint_{\Sigma_+}x^mv^2\, dxdydz +
m\iiint x^{m-1} (3v_x^2+v_y^2+v_z^2-bv^2)\,dxdydz \\ -
m(m-1)(m-2)\iiint_{\Sigma_+}x^{m-3}v^2 \,dxdydz +
2\delta \iiint_{\Sigma_+} x^m (v_x^2+v_y^2+v_z^2)\,dxdydz \\ -
\delta m(m-1)\iiint_{\Sigma_+}x^{m-2}v^2\,dxdyz =2\iiint_{\Sigma_+}x^m\partial^j_xf v\,dxdydz.
\end{multline}
Let $\alpha>0$, $n\geq3$. For any $m\in [3,n]$ multiplying the corresponding inequality by $\alpha^m/(m!)$ and summing by $m$ we obtain that for
\begin{gather*}
P_n(t)\equiv \iiint_{\Sigma_+}\sum_{m=0}^n\frac{(\alpha x)^m}{m!}v^2(t,x,y,z)\,dxdydz, \\
Q_n(t)\equiv \iiint_{\Sigma_+}\sum_{m=0}^n\frac{(\alpha x)^m}{m!}|Dv|^2(t,x,y,z)\,dxdydz
\end{gather*}
inequalities
$$
P_n'(t)+\alpha Q_{n-1}(t)+ 2\delta Q_n(t)\leq  \gamma(t) P_n(t)+c, \quad P_n(0)\leq c, \quad 
\|\gamma\|_{L_1(0,T)}\leq c
$$
hold uniformly with respect to $n$,
whence it follows that
\begin{equation}\label{2.9}
\sup_{t\in[0,T]}\iiint_{\Sigma_+}e^{\alpha x} v^2\,dxdydz
+\int_0^T\!\! \iiint_{\Sigma_+}e^{\alpha x} |Dv|^2 \,dxdydzdt<\infty.
\end{equation}
Multiplying the aforementioned equality for the function $v$ by $2e^{\alpha x} v$ and integrating over $\Sigma$ we derive similarly to \eqref{2.8} that for $R(t)\equiv \iiint e^{\alpha x} v^2\,dxdydz$ the following equality holds:
$$
R'(t)-(\alpha^3+\delta\alpha^2)R(t)= g(t)\in L_1(0,T),
$$
therefore, $R \in C[0,T]$. 

Next, multiplying the corresponding equation by
$-2x^m\Delta^{\bot} v$ and integrating over $\Sigma_+$ we derive similarly to \eqref{2.8} that
\begin{multline*}
\frac{d}{dt}\iiint_{\Sigma_+}x^m(v_y^2+v_z^2)\, dxdydz  \\+
m\iiint x^{m-1} (3v_{xy}^2+3v_{xz}^2+(\Delta^{\bot}v)^2-bv_y^2-bv_z^2)\,dxdydz \\ -
m(m-1)(m-2)\iiint_{\Sigma_+}x^{m-3}(v_y^2+v_z^2) \,dxdydz \\+
2\delta \iiint_{\Sigma_+} x^m (v_{xy}^2+v_{xz}^2+(\Delta^{\bot}v)^2)\,dxdydz \\ -
\delta m(m-1)\iiint_{\Sigma_+}x^{m-2}(v_y^2+v_z^2)\,dxdyz =
2\iiint_{\Sigma_+}x^m(\partial^j_xf_y v_y+\partial^j_xf_z v_z)\,dxdydz.
\end{multline*}
Taking into account inequality \eqref{2.5} similarly to \eqref{2.9} one obtains the rest properties for 
$\partial^j_x\partial_{y,z}^\nu u$, $|\nu|\geq 1$. The properties of $\partial_t\partial_x^j u$ are derived with the use of equation \eqref{2.1} itself.
\end{proof}

We now pass to weak solutions. 

\begin{definition}\label{D2.1}
Let $u_0\in L_2^{\psi(x)}$ for an admissible weight function $\psi$, $f\equiv f_0+f_1$, $f\in L_1(0,T; L_2^{\psi(x)})$, 
$f_1\in L_2(0,T;H^{-1,\psi(x)})$. 
A function $u \in L_2(0,T;L_2^{\psi(x)})$ is called a weak solution to problem \eqref{2.1}, \eqref{1.2}, \eqref{1.3}, if for any function $\varphi$, such that 
$\partial^j_x\varphi \in C([0,T]; L_2^{1/\psi(x)})$, 
$\partial_t^m\partial_x^j\partial_{y,z}^\nu \varphi \in L_2(0,T;L_2^{1/\psi(x)})$ for $j\geq 0$, $2m+|\nu|\leq 2$ 
and $\varphi\big|_{t=T}\equiv 0$, $\varphi\big|_{(0,T)\times \partial\Sigma}\equiv 0$, there holds the following equality:
\begin{multline}\label{2.10}
\int_0^T\!\! \iiint \big[u(\varphi_t+b\varphi_x+\Delta\varphi_x+\delta\Delta\varphi) +
f\varphi\big]\,dxdydzdt \\+
\iiint u_0\varphi\big|_{t=0}\,dxdydz=0.
\end{multline}
\end{definition}

\begin{lemma}\label{L2.3}
If there exists $\beta>0$ such that $\psi(x)\geq \varkappa_{0,\beta}(x)$ $\forall x\in\mathbb R$, then
a weak solution to problem \eqref{2.1}, \eqref{1.2}, \eqref{1.3} is unique.
\end{lemma}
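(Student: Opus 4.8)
The plan is to use a duality argument. Since equation \eqref{2.1} is linear, it suffices to show that if $u_0\equiv 0$ and $f\equiv 0$, then any weak solution $u\in L_2(0,T;L_2^{\psi(x)})$ vanishes. Fix $t_0\in(0,T]$ and an arbitrary test function $g$ that is smooth, compactly supported in the interior of $(0,t_0)\times\Sigma$ (so in particular $g$ and $\Delta^\perp g$ vanish on the lateral boundary and $g$ has the decay and smoothness required below). The idea is to solve the \emph{backward} (adjoint) linear problem
\[
\varphi_t+b\varphi_x+\Delta\varphi_x+\delta\Delta\varphi=g,\qquad \varphi\big|_{t=t_0}=0,\qquad \varphi\big|_{(0,t_0)\times\partial\Sigma}=0,
\]
and insert this $\varphi$ (extended by $0$ for $t\in[t_0,T]$) into the integral identity \eqref{2.10}, which then reduces to $\int_0^{t_0}\!\iiint u\,g=0$. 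Since $g$ is arbitrary, $u\equiv 0$ on $(0,t_0)\times\Sigma$, and letting $t_0\to T$ finishes the argument.

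The adjoint problem is of the same type as \eqref{2.1}: after the change of variables $t\mapsto t_0-t$ and $x\mapsto -x$ it becomes exactly an equation of the form \eqref{2.1} (the signs of $b$, of the third-order term $\Delta u_x$, all flip consistently, while $-\delta\Delta u$ is preserved since $\delta\ge 0$), with a forcing term built from $g$ and with homogeneous initial and boundary data. Hence Lemma~\ref{L2.1} and Lemma~\ref{L2.2} apply and produce a solution $\varphi$ that is $C([0,t_0];L_2)$ together with all the weighted derivative bounds stated there: in particular $\partial_x^j\varphi$ and $\partial_t^m\partial_x^j\partial_{y,z}^\nu\varphi$ have arbitrary polynomial-in-$x$ decay as $x\to-\infty$ is automatic and, by Lemma~\ref{L2.2}, exponential decay $e^{\alpha x}$-weighted bounds as $x\to+\infty$ for any $\alpha>0$. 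The key point is to check that this $\varphi$ qualifies as an admissible test function in Definition~\ref{D2.1}, i.e. that $\partial_x^j\varphi\in C([0,T];L_2^{1/\psi(x)})$ and $\partial_t^m\partial_x^j\partial_{y,z}^\nu\varphi\in L_2(0,T;L_2^{1/\psi(x)})$ for $2m+|\nu|\le 2$. Since $\psi(x)\ge\varkappa_{0,\beta}(x)$, the weight $1/\psi(x)$ is bounded for $x\ge 0$ (indeed $\varkappa_{0,\beta}\ge 1$ there) and grows at most like $e^{-2\beta x}$ as $x\to-\infty$; both regimes are controlled — the first by the plain $L_2$-bounds from Lemma~\ref{L2.1}, the second by the same bounds since $1/\psi$ is then bounded by a constant as well (as $\varkappa_{0,\beta}(x)=e^{2\beta x}\le 1$ for $x\le 0$, so $1/\psi(x)\ge 1$ — one actually needs the \emph{polynomial-weighted} bounds of Lemma~\ref{L2.1} as $x\to-\infty$, which are available for all powers). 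Thus $\varphi$ is legitimate, \eqref{2.10} applies, and the integrals involving $\varphi_t+b\varphi_x+\Delta\varphi_x+\delta\Delta\varphi$ collapse to $\iint u g$.

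The main obstacle is the matching of the weights: one must be sure that the decay guaranteed for $\varphi$ by Lemmas~\ref{L2.1}--\ref{L2.2} overcomes the growth of $1/\psi(x)$ in the pairing $\int u\varphi$ with $u$ only in $L_2^{\psi(x)}$. Near $x=+\infty$ this is exactly where the hypothesis enters: $\psi(x)\ge\varkappa_{0,\beta}(x)$ forces $\psi$ to be bounded below (by something approaching $2$), hence $1/\psi$ bounded above, so $L_2^{\psi}\hookrightarrow L_2\hookrightarrow L_2^{1/\psi}$ locally in that half-space and the products $u\,\partial^\bullet\varphi$ are integrable by Cauchy--Schwarz against the $L_2$-type bounds on $\varphi$. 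Near $x=-\infty$, $1/\psi(x)$ may blow up like $e^{-2\beta x}$, but Lemma~\ref{L2.1} gives $(1+|x|)^n\partial^m_t\partial^j_x\partial^\nu_{y,z}\varphi\in C([0,T];L_2)$ for \emph{every} $n$ — and in fact the same proof (Fourier representation \eqref{2.7}, with the dissipative factor $e^{-\delta(\xi^2+\lambda_l)t}$ or, if $\delta=0$, pure oscillation which still preserves all weighted norms) yields exponential-in-$|x|$ weighted control there too, dominating $e^{-2\beta x}$. With the integrability secured, the duality identity is exact and uniqueness follows. A secondary, purely technical, point is to justify the substitution $\varphi\mapsto$ (backward solution) in \eqref{2.10} by a density/approximation argument, since $g$ ranges over a dense set and both sides of \eqref{2.10} are continuous in the relevant norms; this is routine.
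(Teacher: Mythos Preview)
Your approach is exactly the paper's: Hölmgren duality, solving the adjoint problem via the reflection $t\mapsto t_0-t$, $x\mapsto -x$, and invoking Lemmas~\ref{L2.1}--\ref{L2.2}. However, your weight bookkeeping is tangled, and one of the claims you use to patch it is false.

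The point you miss is that the reflection $x\mapsto -x$ flips the side on which Lemma~\ref{L2.2} delivers exponential decay. Lemma~\ref{L2.2} applied to the transformed function $\widetilde\varphi(\tau,\xi,y,z)$ gives $e^{\alpha\xi}$-weighted control for $\xi\to +\infty$; since $\xi=-x$, this is exponential decay of $\varphi$ as $x\to -\infty$, not as $x\to +\infty$. This is precisely what is needed: $1/\psi(x)\le 1/\varkappa_{0,\beta}(x)=e^{-2\beta x}$ blows up exponentially only as $x\to -\infty$, while for $x\to +\infty$ one has $1/\psi(x)\le c$ and plain $L_2$ bounds from Lemma~\ref{L2.1} suffice. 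The paper states exactly this (``$1/\widetilde\psi(x)\le c e^{2\beta x}$ when $x\to +\infty$, $1/\widetilde\psi(x)\le c$ when $x\to -\infty$'', in the transformed variable). Your write-up has the two regimes swapped in the first paragraph and then, in the second, tries to repair the gap at $x\to -\infty$ by asserting that the Fourier representation~\eqref{2.7} ``yields exponential-in-$|x|$ weighted control'' in \emph{both} directions. That claim is incorrect: the Airy-type propagator gains exponential decay only on one side (this asymmetry is the whole content of Lemma~\ref{L2.2}), and polynomial weights from Lemma~\ref{L2.1}, however high the power, cannot dominate $e^{-2\beta x}$. Once you track the reflection correctly, no such claim is needed and the argument closes.
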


\begin{proof} 
The proof is carried out by standard H\"olmgren's argument on the basis of Lemmas~\ref{L2.1} and~\ref{L2.2}. Let $F$ be an arbitrary function from the space $C_0^\infty(\Pi_T)$. Consider an auxiliary linear problem \eqref{2.1}, \eqref{1.2}, \eqref{1.3} for $u_0\equiv 0$ and $f(t,x,y,z)\equiv -F(T-t,-x,y,z)$. According to the aforementioned lemmas there exists a solution $\widetilde\varphi$ to this problem such that 
$\partial_x^j \widetilde\varphi \in C([0,T];L_2^{1/\widetilde\psi(x)})$, where
$\widetilde\psi(x)\equiv \psi(-x)$, $\partial^m_t\partial_x^j \widetilde\varphi, \partial_x^j\partial_{y,z}^\nu \widetilde\varphi\in L_2(0,T;L_2^{1/\widetilde\psi(x)})$ if $2m+|\nu|\leq 2$ (note that $1/\widetilde\psi(x) \leq c e^{2\beta x}$ when $x\to +\infty$, $1/\widetilde\psi(x) \leq c$ when $x\to -\infty$). 

Define $\varphi(t,x,y,z)\equiv \widetilde\varphi(T-t,-x,y,z)$. It is easy to see that this function satisfies the hypothesis of Definition~\ref{D2.1} and $\varphi_t+b\varphi_x+\Delta\varphi_x+\delta\Delta\varphi=F$ in the space
$L_2(0,T;L_2^{1/\psi(x)})$.

Therefore, if $u$ is a weak solution to problem \eqref{2.1}, \eqref{1.2}, \eqref{1.3} for $u_0\equiv 0$ and $f\equiv 0$, it follows from \eqref{2.9} that $\langle u,F\rangle=0$ and so $u\equiv 0$.
\end{proof}

Now we present a number of auxiliary lemmas on solubility of the linear problem in non-smooth case.

\begin{lemma}\label{L2.4}
Let $u_0\in L_2^{\psi(x)}$ for a certain admissible weight function $\psi(x)$ such that $\psi'(x)$ is also an admissible weight function, $f\equiv f_0+\delta^{1/2}f_{1x}+f_{2x}$, where $f_0\in L_1(0,T; L_2^{\psi(x)})$, $f_1\in L_2(0,T;L_2^{\psi(x)})$,
$f_2\in L_2(0,T;L_2^{\psi^2(x)/\psi'(x)})$. Then there exists a weak solution to problem \eqref{2.1}, \eqref{1.2}, \eqref{1.3} $u(t,x,y,z)$ from the space $C([0,T];L_2^{\psi(x)})\cap L_2(0,T;H_0^{1,\psi'(x)})$ and $\delta|Du|\in L_2(0,T;L_2^{\psi(x)})$. Moreover, for any $t\in (0,T]$ uniformly with respect to $\delta$
\begin{multline}\label{2.11}
\|u\|_{C([0,t];L_2^{\psi(x)})}+\|u\|_{L_2(0,t;H^{1,\psi'(x)})}+\delta^{1/2}\bigl\||Du|\bigr\|_{L_2(0,t;L_2^{\psi(x)})} \\ \leq c(T) \left[\|u_0\|_{L_2^{\psi(x)}}+\|f_0\|_{L_1(0,t;L_2^{\psi(x)})}+\|f_1\|_{L_2(0,t;L_2^{\psi(x)})}
+\|f_2\|_{L_2(0,T;L_2^{\psi^2(x)/\psi'(x)})}\right],
\end{multline}
\begin{multline}\label{2.12}
\iiint u^2(t,x,y,z)\psi(x)\,dxdydz+\int_0^t\!\! \iiint (3u_x^2+u_y^2+u_z^2)\psi'\,dxdydzd\tau \\
+2\delta \int_0^t \!\! \iiint |Du|^2\psi\,dxdydzd\tau -
\int_0^t \!\! \iiint u^2\cdot (b\psi'+\psi'''+\delta\psi'')\,dxdydzd\tau \\ =
\iiint u_0^2\psi\,dxdydz+2\int_0^t\!\iiint f_0u\psi \,dxdydzd\tau \\
-2\int_0^t\!\! \iiint (\delta^{1/2}f_1+f_2)(u\psi)_x \,dxdydzd\tau.
\end{multline}
\end{lemma}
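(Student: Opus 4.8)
The plan is to establish Lemma~\ref{L2.4} by an a~priori estimate plus a limiting procedure from the smooth solutions constructed in Lemmas~\ref{L2.1} and~\ref{L2.2}. First I would assume that $u_0$ and $f_0,f_1,f_2$ are smooth and rapidly decreasing (so that Lemma~\ref{L2.1}, and if needed Lemma~\ref{L2.2}, applies and gives a genuine solution $u$ with all the decay and smoothness one could want), derive \eqref{2.12} and then \eqref{2.11} for that solution with constants independent of $\delta$ and of the smoothness, and finally pass to the limit using weak compactness. The uniqueness of the weak solution is already guaranteed by Lemma~\ref{L2.3} whenever $\psi$ dominates $\varkappa_{0,\beta}$, which is the case here since $\psi$ is admissible and we may assume $\psi\geq 1$; in any event existence is all that is asserted.

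The heart of the matter is the energy identity \eqref{2.12}. I would multiply equation \eqref{2.1} by $2u\psi(x)$ and integrate over $\Sigma$ on the time interval $(0,t)$. The term $2u_t u\psi$ gives $\frac{d}{dt}\iiint u^2\psi$; the term $2bu_x u\psi$ integrates to $-\iiint u^2 b\psi'$; the dispersive term $2\Delta u_x\, u\psi = 2(u_{xxx}+u_{xyy}+u_{xzz})u\psi$ is handled by repeated integration by parts in $x$, $y$, $z$ (all boundary terms in $y,z$ vanish because $u|_{\partial\Sigma}=0$, and the $x$-integrations produce no boundary terms because of the weighted decay), yielding the quadratic form $\iiint(3u_x^2+u_y^2+u_z^2)\psi' - \iiint u^2\psi'''$; the dissipative term $-2\delta\Delta u\, u\psi$ gives $2\delta\iiint|Du|^2\psi - \delta\iiint u^2\psi''$. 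On the right one gets $2\iiint f_0 u\psi$ directly, and for the derivative pieces $2\iiint(\delta^{1/2}f_{1x}+f_{2x})u\psi = -2\iiint(\delta^{1/2}f_1+f_2)(u\psi)_x$. Assembling these gives exactly \eqref{2.12}. The admissibility of $\psi$ and $\psi'$ is what makes every coefficient ($b\psi'+\psi'''+\delta\psi''$, etc.) pointwise bounded by a constant times $\psi$ or $\psi'$, so that all these integrals are finite and controllable.

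From \eqref{2.12} I would extract \eqref{2.11}. Drop the nonnegative dissipative term when not needed; bound $\bigl|\iiint u^2(b\psi'+\psi'''+\delta\psi'')\bigr|\leq c\iiint u^2\psi$ using admissibility (note $\psi'''\le c\psi'\le c'\psi$ and $\psi''\le c\psi$), so this is absorbed by a Gronwall argument in $t$. For the forcing: $2\iiint f_0u\psi\le 2\|f_0\|_{L_2^{\psi}}\|u\|_{L_2^{\psi}}$, contributing $\|f_0\|_{L_1(0,t;L_2^\psi)}$ after the standard Gronwall-with-$L^1$-coefficient trick. For $2\iiint(\delta^{1/2}f_1+f_2)(u\psi)_x$, expand $(u\psi)_x=u_x\psi+u\psi'$; the $u_x\psi$ part against $\delta^{1/2}f_1$ is absorbed into the $\delta\iiint|Du|^2\psi$ term by Young's inequality, the $u_x\psi$ part against $f_2$ is absorbed into $\iiint u_x^2\psi'$ after writing $f_2 u_x\psi = (f_2\psi/\psi'^{1/2})(u_x\psi'^{1/2})$ and using $\psi\le c\psi'$... here one must be slightly careful: the natural pairing is $|\iiint f_2 u_x\psi|\le \varepsilon\iiint u_x^2\psi' + C_\varepsilon\iiint f_2^2\psi^2/\psi'$, which is exactly why the space $L_2(0,T;L_2^{\psi^2/\psi'})$ appears in the hypothesis; and the $u\psi'$ parts are controlled by $\iiint u^2\psi'\le c\iiint u^2\psi$ times $\|f_i\|$, again Gronwall-absorbed. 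Collecting everything yields \eqref{2.11} with a constant depending only on $T$ and the admissibility constants, uniformly in $\delta\in[0,1]$.

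The main obstacle is the bookkeeping of the integration-by-parts boundary terms and the absorption inequalities in the last step — in particular getting the $f_2$ term to land precisely in $L_2^{\psi^2/\psi'}$ and not in a larger space, which forces the exact Young split indicated above. The limiting procedure itself is routine: approximate $u_0$ in $L_2^{\psi}$ and $f_0$ in $L_1(0,T;L_2^\psi)$, $f_1$ in $L_2(0,T;L_2^\psi)$, $f_2$ in $L_2(0,T;L_2^{\psi^2/\psi'})$ by smooth compactly-supported-in-$x$ functions (vanishing on $\partial\Sigma$ together with $\Delta^\perp$ of them, as required by Lemma~\ref{L2.1}); the uniform bound \eqref{2.11} gives a subsequence converging weakly in $L_2(0,T;H^{1,\psi'})$ and weakly-$*$ in $L_\infty(0,T;L_2^\psi)$, with weak continuity in $t$; one passes to the limit in the linear identity \eqref{2.10} and in \eqref{2.11} (lower semicontinuity of norms) and in \eqref{2.12} (the left side is convex/lower-semicontinuous, the right side linear), and checks $u\in C([0,T];L_2^\psi)$ from the equation as in Lemma~\ref{L2.2}. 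This completes the proof.
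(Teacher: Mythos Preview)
Your proposal is correct and follows essentially the same route as the paper: reduce to smooth data via Lemmas~\ref{L2.1}--\ref{L2.2}, multiply \eqref{2.1} by $2u\psi$ and integrate to get \eqref{2.12}, read off \eqref{2.11} from \eqref{2.12} using admissibility and the Young splits you indicate (in particular the pairing $|f_2 u_x\psi|\le \varepsilon u_x^2\psi'+C_\varepsilon f_2^2\psi^2/\psi'$ is exactly the reason for the $L_2^{\psi^2/\psi'}$ hypothesis), then pass to the limit.

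One small simplification worth noting: since the problem is \emph{linear}, you do not need weak compactness for the limiting step. Applying \eqref{2.11} to the difference of two approximating solutions shows directly that the sequence is Cauchy in $C([0,T];L_2^{\psi(x)})\cap L_2(0,T;H_0^{1,\psi'(x)})$, so it converges strongly there and the limit automatically lies in $C([0,T];L_2^{\psi(x)})$ with no separate argument needed to upgrade from $C_w$ to $C$. This is what the paper means by ``via closure''. Your weak-compactness route works too, but then the final sentence ``checks $u\in C([0,T];L_2^\psi)$ from the equation as in Lemma~\ref{L2.2}'' is doing real work that you have not spelled out; the Cauchy-sequence argument avoids this entirely.
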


\begin{proof}
Let at first $u_0\in C_0^\infty(\Sigma)$, $f_0,f_1,f_2 \in C_0^\infty(\Pi_T)$. Consider the corresponding solution from the class described in Lemmas~\ref{L2.1} and~\ref{L2.2}. Note that $\psi$ is non-decreasing and has not more than exponential growth at $+\infty$. Then 
$\partial_x^j u\in C([0,T];H_0^{1,\psi(x)})$, 
$\partial_x^j u_t, \partial_x^j\partial_{y,z}^\nu u \in L_2(0,T;L_2^{\psi(x)})$ if $|\nu|=2$ for any $j\geq 0$. Therefore, one can multiply equation \eqref{2.1} by $2u(t,x,y,z)\psi(x)$, integrate and as a result obtain equality \eqref{2.12}. Note that this equality provides estimate \eqref{2.11}, which in turn justifies the assertion of the lemma in the general case.
\end{proof}

\begin{corollary}\label{C2.1}
Let the hypothesis of Lemma~\ref{L2.4} be satisfied for $\psi(x)\geq 1$ $\forall x\in\mathbb R$ and $f_2\equiv 0$. Then for the (unique) weak solution $u\in C([0,T];L_2)$ and any $t\in (0,T]$
\begin{multline}\label{2.13}
\iiint u^2(t,x,y,z)\,dxdydz
+2\delta \int_0^t \!\! \iiint |Du|^2\,dxdydzd\tau  =
\iiint u_0^2\,dxdydz \\+2\int_0^t\!\iiint f_0u \,dxdydzd\tau 
-2\delta^{1/2}\int_0^t\! \iiint f_1u_x \,dxdydzd\tau.
\end{multline}
\end{corollary}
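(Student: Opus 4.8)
The assertion is the basic $L_2$ energy identity for \eqref{2.1}; formally it is \eqref{2.12} specialized to the trivial weight $\psi\equiv1$ (so that $b\psi'+\psi'''+\delta\psi''\equiv0$) together with $f_2\equiv0$. One cannot, however, simply invoke Lemma~\ref{L2.4} with $\psi\equiv1$, since its hypotheses require $\psi'$ to be an admissible weight function, in particular strictly positive, which fails for $\psi'\equiv0$. The plan is therefore to re-run the proof of Lemma~\ref{L2.4}, but to multiply the smooth approximating equations by $2u$ rather than by $2u\psi$. Concretely, fix the admissible weight $\psi$ furnished by the hypothesis (so $\psi$ and $\psi'$ are admissible and $\psi\geq1$) together with $u_0\in L_2^{\psi(x)}$, $f_0\in L_1(0,T;L_2^{\psi(x)})$, $f_1\in L_2(0,T;L_2^{\psi(x)})$; choose $u_0^{(n)}\in C_0^\infty(\Sigma)$ and $f_0^{(n)},f_1^{(n)}\in C_0^\infty(\Pi_T)$ converging to $u_0$, $f_0$, $f_1$ in the respective spaces, and let $u^{(n)}$ be the smooth, rapidly decaying solutions to \eqref{2.1}, \eqref{1.2}, \eqref{1.3} with $f=f_0^{(n)}+\delta^{1/2}f^{(n)}_{1x}$ provided by Lemmas~\ref{L2.1} and~\ref{L2.2}.

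For each $n$ I would multiply the equation for $u^{(n)}$ by $2u^{(n)}$ and integrate over $\Sigma$; this is legitimate by the smoothness and decay of $u^{(n)}$. The terms $2bu^{(n)}u^{(n)}_x$ and $2\Delta u^{(n)}_x\cdot u^{(n)}$ integrate to zero, the dissipative term $-2\delta\Delta u^{(n)}\cdot u^{(n)}$ produces $2\delta\iiint|Du^{(n)}|^2$ after integration by parts (using $u^{(n)}\big|_{\partial\Sigma}=0$), and $2(\delta^{1/2}f^{(n)}_{1x})u^{(n)}=-2\delta^{1/2}f_1^{(n)}u^{(n)}_x$ after integrating by parts in $x$. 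Integrating in time yields \eqref{2.13} for the data $(u_0^{(n)},f_0^{(n)},f_1^{(n)})$.

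It then remains to pass to the limit $n\to\infty$. By linearity $u^{(n)}-u^{(m)}$ is the weak solution with data $(u_0^{(n)}-u_0^{(m)},f_0^{(n)}-f_0^{(m)},f_1^{(n)}-f_1^{(m)})$, so estimate \eqref{2.11} (uniform in $\delta$) shows that $\{u^{(n)}\}$ is Cauchy in $C([0,T];L_2^{\psi(x)})\cap L_2(0,T;H^{1,\psi'(x)})$ and that $\{\delta^{1/2}|Du^{(n)}|\}$ is Cauchy in $L_2(0,T;L_2^{\psi(x)})$, the limit being the weak solution $u$ of Lemma~\ref{L2.4}, which lies in $C([0,T];L_2)$ because $\psi\geq1$. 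The same bound $\psi\geq1$ downgrades the weighted convergences to $u^{(n)}\to u$ in $C([0,T];L_2)$ and $\delta^{1/2}|Du^{(n)}|\to\delta^{1/2}|Du|$ in $L_2(0,T;L_2)$, and gives $u_0^{(n)}\to u_0$ in $L_2$, $f_0^{(n)}\to f_0$ in $L_1(0,T;L_2)$, $f_1^{(n)}\to f_1$ in $L_2(0,T;L_2)$. Passing to the limit in \eqref{2.13} term by term is then routine: the term $\iiint(u^{(n)})^2(t,\cdot)$ is controlled (uniformly in $t$) by the $C([0,T];L_2)$ convergence, as is $2\int_0^t\iiint|\delta^{1/2}Du^{(n)}|^2$; the pairing $\int_0^t\iiint f_0^{(n)}u^{(n)}$ by the $L_1(0,T;L_2)\times C([0,T];L_2)$ convergence; and $\delta^{1/2}\int_0^t\iiint f_1^{(n)}u^{(n)}_x=\int_0^t\iiint f_1^{(n)}\cdot(\delta^{1/2}u^{(n)}_x)$ by the $L_2(0,T;L_2)\times L_2(0,T;L_2)$ convergence. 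This proves \eqref{2.13} for $u$. Uniqueness of the weak solution follows from Lemma~\ref{L2.3}: since $\psi\geq1$ one has $2\psi\geq\varkappa_{0,1}$ and $L_2^{2\psi}=L_2^{\psi}$, so the hypothesis of that lemma holds with the weight $2\psi$. The only genuine obstacle is the inapplicability of Lemma~\ref{L2.4} at $\psi\equiv1$ noted at the outset, which is what forces this detour through the smooth approximations; all remaining steps amount to matching each term of \eqref{2.13} with the appropriate mode of convergence.
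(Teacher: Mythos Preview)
Your proof is correct and follows exactly the paper's approach: establish \eqref{2.13} for smooth data by multiplying \eqref{2.1} by $2u$ and integrating, then pass to the limit using estimate \eqref{2.11}. The paper compresses this to a single line (``in the smooth case this equality is obvious and in the general case is obtained on the basis of estimate \eqref{2.11} via closure''); you have simply spelled out the closure argument and the uniqueness appeal to Lemma~\ref{L2.3} in detail.
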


\begin{proof}
In the smooth case this equality is obvious and in the general case is obtained on the basis of estimate \eqref{2.11} via closure.
\end{proof}

\begin{lemma}\label{L2.5}
Let $u_0\in H_0^{1,\psi(x)}$ for a certain admissible weight function $\psi(x)$ such that $\psi'(x)$ is also an admissible weight function, $f\equiv f_0+\delta^{1/2}f_1$, where $f_0\in L_1(0,T;H_0^{1,\psi(x)})$, $f_1\in L_2(0,T;L_2^{\psi(x)})$. Then there exists a weak solution to problem \eqref{2.1}, \eqref{1.2}, \eqref{1.3} $u(t,x,y,z)$ from the space $C([0,T];H_0^{1,\psi(x)})\cap L_2(0,T;H^{2,\psi'(x)})$ and $\delta|D^2u|\in L_2(0,T;L_2^{\psi(x)})$ . Moreover, for any $t\in (0,T]$ uniformly with respect to $\delta$
\begin{multline}\label{2.14}
\|u\|_{C([0,t];H^{1,\psi(x)})}+\|u\|_{L_2(0,t;H^{2,\psi'(x)})}+
\delta^{1/2}\bigl\||D^2u|\bigr\|_{L_2(0,t;L_2^{\psi(x)})} \\ \leq c(T) \left[\|u_0\|_{H^{1,\psi(x)}}+
\|f_0\|_{L_1(0,t;H^{1,\psi(x)})}+\|f_1\|_{L_2(0,t;L_2^{\psi(x)})}\right],
\end{multline}
\begin{multline}\label{2.15}
\iiint|Du(t,x,y,z)|^2\psi(x)\,dxdydz
+c_0\int_0^t\!\iiint|D^2u|^2\cdot(\psi'+\delta\psi)\,dxdydzd\tau \\
\leq \iiint|Du_0|^2\psi \,dxdydz+
c\int_0^t\!\iiint |Du|^2\psi\,dxdydzd\tau \\
+2\int_0^t\!\iint(f_{0x}u_x+f_{0y}u_y+f_{0z}u_z)\psi \,dxdydzd\tau \\
-2\delta^{1/2}\int_0^t\iiint f_1[(u_x\psi)_x+u_{yy}\psi+u_{zz}\psi]\,dxdydzd\tau,
\end{multline}
where the constants $c_0$, $c$ depend on $b$ and the properties of the function $\psi$ and the domain $\Omega$.
\end{lemma}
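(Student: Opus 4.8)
The plan is to follow the same strategy as in the proof of Lemma~\ref{L2.4}: first establish the result for smooth, rapidly decaying data by means of the classical solution furnished by Lemmas~\ref{L2.1} and~\ref{L2.2}, derive the energy identity \eqref{2.15} and the a~priori bound \eqref{2.14} at this regularity level, and then pass to the general case by closure. So first I would take $u_0\in C_0^\infty(\Sigma)$ and $f_0,f_1\in C_0^\infty(\Pi_T)$, for which Lemmas~\ref{L2.1}--\ref{L2.2} give a solution $u$ with all the weighted space-time regularity needed to justify the manipulations below (in particular $\partial_x^j u\in C([0,T];H_0^{1,\psi(x)})$ together with enough $y,z$-derivatives and the $t$-derivative lying in $L_2$ with weight $\psi$).

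The core computation is to differentiate the equation and test against $-2\,\Delta u\,\psi(x)$, or equivalently to apply $\partial_x$, $\partial_y$, $\partial_z$ to \eqref{2.1}, multiply the resulting three equations by $2u_x\psi$, $2u_y\psi$, $2u_z\psi$ respectively, add, and integrate over $\Sigma$. The dispersive term $\Delta u_x$ produces, after integration by parts, the positive principal contribution $\int\int\int |D^2u|^2\psi'$ on the left (this is where $\psi'$ being an admissible weight function and $\psi'\le c\psi$ are used, exactly as in Lemma~\ref{L2.4}), the parabolic term $-\delta\Delta u$ gives $2\delta\int\int\int|D^2u|^2\psi$, and all the remaining terms coming from commuting the $x$-derivative with the weight, from $b u_x$, and from the lower-order pieces of the integration by parts are controlled by $c\int\int\int|Du|^2\psi$, using $|\psi^{(j)}|\le c\psi$; the boundary terms on $\partial\Sigma$ vanish because $u|_{\partial\Sigma}=0$, $u_x|_{\partial\Sigma}=0$ along $\partial\Sigma$, and the eigenfunction structure of Lemma~\ref{L2.1} gives $\Delta^\bot u|_{\partial\Sigma}=0$ as well. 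The forcing contributes the last two integrals in \eqref{2.15}, the $f_1$ term being integrated by parts once in $x$ to move a derivative onto $u_x\psi$. This yields \eqref{2.15}.

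From \eqref{2.15} the a~priori estimate \eqref{2.14} follows by a Gronwall argument in $t$: the term $c\int_0^t\int\int\int|Du|^2\psi$ is absorbed via Gronwall after first bounding $\|u\|_{C([0,t];L_2^{\psi(x)})}$ and $\|\,|Du|\,\|_{L_2(0,t;L_2^{\psi(x)})}$ by Lemma~\ref{L2.4} applied with $f_2\equiv0$ (its right-hand side involves only $\|u_0\|_{L_2^{\psi(x)}}\le\|u_0\|_{H^{1,\psi(x)}}$, $\|f_0\|_{L_1(0,t;L_2^{\psi(x)})}\le\|f_0\|_{L_1(0,t;H^{1,\psi(x)})}$ and $\|f_1\|_{L_2(0,t;L_2^{\psi(x)})}$); the forcing integrals in \eqref{2.15} are estimated by Cauchy--Schwarz and Young, putting a small fraction of $\int|D^2u|^2(\psi'+\delta\psi)$ on the left and the remainder into the Gronwall data, with the $\delta^{1/2}f_1$ term handled using $\delta^{1/2}|D^2u|$ and the bound $\delta^{1/2}\le1$. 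All constants depend only on $b$, $T$, $\Omega$ and the admissibility constants of $\psi$, hence the estimate is uniform in $\delta\in[0,1]$.

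Finally I would remove the smoothness assumption by approximating $u_0$ in $H_0^{1,\psi(x)}$ and $f_0,f_1$ in $L_1(0,T;H_0^{1,\psi(x)})$ and $L_2(0,T;L_2^{\psi(x)})$ by $C_0^\infty$ data; the uniform bound \eqref{2.14}, applied to differences of the corresponding solutions, shows the approximating sequence is Cauchy in $C([0,T];H^{1,\psi(x)})\cap L_2(0,T;H^{2,\psi'(x)})$ with $\delta^{1/2}|D^2u|$ Cauchy in $L_2(0,T;L_2^{\psi(x)})$, so the limit $u$ lies in the asserted space, satisfies \eqref{2.14}, inherits the boundary condition $u|_{(0,T)\times\partial\Sigma}=0$, and is a weak solution in the sense of Definition~\ref{D2.1}; identity \eqref{2.15} passes to the limit since every term is (at worst weakly) continuous under this convergence. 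I expect the main obstacle to be bookkeeping: correctly tracking all the commutator terms generated by $\psi(x)$ and the third-order dispersion and verifying that each is dominated by $\int\int\int|Du|^2\psi$ with the right sign for the principal terms, while being careful that the boundary contributions really do vanish under the stated (Dirichlet plus $\Delta^\bot$) conditions; the closure step itself is routine once \eqref{2.14} is in hand.
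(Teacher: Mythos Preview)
Your strategy matches the paper's: multiply \eqref{2.1} by a second-order operator times $\psi$, derive \eqref{2.15} for smooth data, deduce \eqref{2.14} by Gronwall and Lemma~\ref{L2.4}, then close by density. Two technical points to sharpen. First, the paper's multiplier is $-2(u_x\psi\eta_n)_x - 2\Delta^\bot u\,\psi\eta_n$ (with cutoff $\eta_n(x)=\eta(n-|x|)$, then $n\to\infty$), not $-2\Delta u\,\psi$; the form $(u_x\psi)_x$ makes the time-derivative term come out cleanly as $\tfrac{d}{dt}\int u_x^2\psi$ without a residual $\int u_t u_x\psi'$, and your ``equivalently differentiate in $y,z$ and multiply by $2u_y\psi,2u_z\psi$'' is \emph{not} equivalent on $\Sigma$ since $u_y,u_z$ need not vanish on $\partial\Sigma$, whereas testing against $\Delta^\bot u$ only generates boundary terms carrying $u$, $u_x$, $u_{xx}$ or $u_t$, all of which do. Second, the identity you obtain controls $3u_{xx}^2+4u_{xy}^2+4u_{xz}^2+(\Delta^\bot u)^2$, not the full $|D^2u|^2$; the paper then invokes the elliptic estimate \eqref{2.5} on $\Omega$ to recover $u_{yy}^2+2u_{yz}^2+u_{zz}^2$ from $(\Delta^\bot u)^2$, which is precisely where the constants pick up the dependence on $\Omega$.
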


\begin{proof}
In the smooth case $u_0\in C_0^\infty(\Sigma)$, $f_0,f_1 \in C_0^\infty(\Pi_T)$ multiplying \eqref{2.1} by $-2\bigl(u_x(t,x,y,z)\psi(x)\eta_n(x)\bigr)_x -2\Delta^{\bot}u(t,x,y,z)\psi(x)\eta_n(x)$, where $u$ is the solution constructed in Lemmas~\ref{L2.1} and~\ref{L2.2}, $\eta_n(x)\equiv \eta(n-|x|)$, and integrating we obtain an equality
\begin{multline}\label{2.16}
\iiint |Du(t,x,y,z)|^2\psi\eta_n \,dxdydz - \iiint |Du_0|^2\psi\eta_n dxdydz \\+ 
\int_0^t\!\!\iiint(3u_{xx}^2+4u^2_{xy}+4u^2_{xz} +(\Delta^{\bot}u)^2)(\psi\eta_n)' \,dxdydzd\tau \\
+2\delta\int_0^t\!\!\iiint(u^2_{xx}+2u^2_{xy}+2u^2_{xz}+(\Delta^{\bot}u)^2)\psi\eta_n \,dxdydzd\tau \\ 
-\int_0^t\!\!\iiint |Du|^2(b(\psi\eta_n)' +(\psi\eta_n)'''+\delta(\psi\eta_n)'')\,dxdydzd\tau \\
=2\int_0^t\!\!\iiint(f_{0x}u_x+f_{0y}u_y+f_{0z}u_z)\psi\eta_n \,dxdydxd\tau \\
-2\delta^{1/2}\int_0^t\!\!\iiint f_1\bigl((u_x\psi\eta_n)_x+\Delta^{\bot}u\psi\eta_n\bigr)\,dxdydzd\tau. 
\end{multline}
Passing to the limit when $n\to +\infty$ and using the properties of the function $\psi$ and inequality \eqref{2.5} we obtain \eqref{2.15} in the smooth case. This inequality provides estimate \eqref{2.14}. The general case is handled via closure.
\end{proof}

\begin{corollary}\label{C2.2}
Let the hypothesis of Lemma~\ref{L2.5} be satisfied for $\psi(x)\geq 1$ $\forall x\in\mathbb R$. Then for the (unique) weak solution $u\in C([0,T];H_0^1)$ and any $t\in (0,T]$
\begin{multline}\label{2.17}
\iiint |Du(t,x,y,z)|^2\,dxdydz
+2\delta\int_0^t\!\!\iiint(u^2_{xx}+2u^2_{xy}+2u^2_{xz}+(\Delta^{\bot}u)^2)\,dxdydzd\tau  \\ =
\iiint |Du_0|^2\,dxdydz +2\int_0^t\!\iiint(f_{0x}u_x+f_{0y}u_y+f_{0z}u_z) \,dxdydzd\tau \\
-2\delta^{1/2}\int_0^t\!\!\iiint f_1\Delta u\,dxdydzd\tau.
\end{multline}
\end{corollary}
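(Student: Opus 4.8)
The plan is to specialize the identity \eqref{2.15} (more precisely, the pre-limit equality \eqref{2.16}) to the weight $\psi(x)\equiv 1$ and exploit the fact that then $\psi'=\psi''=\psi'''=0$, which eliminates all the weight-derivative terms and turns the inequality \eqref{2.15} into an exact identity. First I would treat the smooth case $u_0\in C_0^\infty(\Sigma)$, $f_0,f_1\in C_0^\infty(\Pi_T)$, taking the solution $u$ from Lemmas~\ref{L2.1} and~\ref{L2.2}; here all the integrations by parts leading to \eqref{2.16} are justified by the decay and regularity of $u$. Setting $\psi\equiv 1$ in \eqref{2.16} kills the third and fourth lines of the left side except the $\delta$-term, and on the right side the last integral becomes $-2\delta^{1/2}\iiint f_1\bigl((u_x\eta_n)_x+\Delta^{\bot}u\,\eta_n\bigr)$.

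The one point needing care is the passage $n\to+\infty$ in the cut-off $\eta_n(x)=\eta(n-|x|)$: since $\psi\equiv1$ does not help control the growth of $u$ in $x$, I would instead invoke the conclusion of Lemma~\ref{L2.5} itself with $\psi\equiv1$ (legitimate since $\psi\equiv 1$ and $\psi'\equiv0$ are admissible — or, if one wants $\psi'$ strictly positive, one applies the lemma with any admissible $\psi\ge1$ having $\psi'$ admissible and then notes the estimate \eqref{2.14} is uniform), which gives $u\in C([0,T];H_0^1)$, $|D^2u|\in L_2(\Pi_T)$ (after multiplication by $\delta^{1/2}$) and $Du\in L_2(0,T;H^1)$. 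With these memberships in hand, the terms $\iiint|Du|^2\eta_n$, the $\delta$-term $\delta\iiint(u_{xx}^2+2u_{xy}^2+2u_{xz}^2+(\Delta^{\bot}u)^2)\eta_n$, and the right-hand side integrals all converge by dominated convergence to their counterparts without $\eta_n$, the derivative terms $(\eta_n)',(\eta_n)'',(\eta_n)'''$ are supported in $n-1\le|x|\le n$ and tend to zero, and in the $f_1$-term the compact support of $f_1$ makes $\eta_n\equiv1$ on $\operatorname{supp}f_1$ for large $n$, so $(u_x\eta_n)_x+\Delta^{\bot}u\,\eta_n$ may be replaced by $u_{xx}+\Delta^{\bot}u=\Delta u$ there. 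This yields \eqref{2.17} in the smooth case.

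Finally I would extend \eqref{2.17} to the general data $u_0\in H_0^{1,\psi(x)}$, $f_0\in L_1(0,T;H_0^{1,\psi(x)})$, $f_1\in L_2(0,T;L_2^{\psi(x)})$ by approximation: choose $u_0^{(j)},f_0^{(j)},f_1^{(j)}$ smooth and compactly supported converging to $u_0,f_0,f_1$ in the respective norms, apply the a~priori estimate \eqref{2.14} to the differences of the corresponding solutions to get convergence of $u^{(j)}$ in $C([0,T];H_0^{1,\psi(x)})$, of $\delta^{1/2}|D^2u^{(j)}|$ in $L_2(0,T;L_2^{\psi(x)})$, and of $|Du^{(j)}|$ in $L_2(0,T;H^{1,\psi'(x)})$, and pass to the limit in each term of \eqref{2.17}; since $\psi\ge1$, convergence in the weighted norms implies convergence in the unweighted ones appearing in \eqref{2.17}. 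The main (and essentially only) obstacle is the cut-off limit just discussed; once one knows the solution lies in $C([0,T];H_0^1)$ with $Du\in L_2(0,T;H^1)$, everything else is routine. Uniqueness of the weak solution here follows from Lemma~\ref{L2.3}, since $\psi\equiv1\ge\varkappa_{0,\beta}(x)$ fails only for the unweighted case — more precisely one applies Lemma~\ref{L2.3} in the setting of Lemma~\ref{L2.5}'s weighted space, where the hypothesis on $\psi$ is available, and notes the solution is the same.
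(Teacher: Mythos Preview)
Your proposal is correct and follows essentially the same route as the paper: derive \eqref{2.17} in the smooth case by setting $\psi\equiv 1$ in the pre-limit identity \eqref{2.16} and letting $n\to\infty$, then extend to general data by closure using the a~priori estimate \eqref{2.14}. The paper's own proof says exactly this in two lines; your version simply fills in the details of the cut-off limit and the approximation argument, which is entirely appropriate. Your closing remarks on uniqueness are a bit tangled (indeed $\psi\equiv1$ does not dominate $\varkappa_{0,\beta}$), but you correctly identify that uniqueness is to be read off from the weighted setting of Lemma~\ref{L2.5} rather than from $\psi\equiv1$ directly.
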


\begin{proof}
In the smooth case this equality is derived from \eqref{2.16}, where formally one must set $\psi\equiv 1$, and the consequent passage to the limit when $n\to +\infty$ and in the general case is obtained on the basis of estimate \eqref{2.14} via closure.
\end{proof}

\begin{lemma}\label{L2.6}
Let the hypothesis of Lemma~\ref{L2.5} be satisfied for some $\delta>0$ and $\psi(x)\geq 1$ $\forall x\in \mathbb R$.  Consider the (unique) weak solution $u\in C([0,T];H_0^{1,\psi(x)})\cap L_2(0,T;H^{2,\psi(x)})$. Then for any $t\in (0,T]$ the following equality holds: 
\begin{multline}\label{2.18}
-\frac 13 \iiint u^3(t,x,y,z)\widetilde\psi(x)\,dxdydz
-b\int_0^t\!\!\iiint u^2u_x\widetilde\psi\,dxdydzd\tau \\
+2\int_0^t\!\!\iiint uu_x \Delta u\widetilde\psi \,dxdydzd\tau 
+\int_0^t\!\!\iiint u^2 \Delta u\widetilde\psi' \,dxdydzd\tau \\
-2\delta\int_0^t\!\!\iiint u |Du|^2\widetilde\psi\,dxdydzd\tau 
-\delta\int_0^t\!\!\iiint u^2u_x\widetilde\psi'\,dxdydzd\tau \\
=-\frac 13 \iiint u_0^3\widetilde\psi \,dxdydz 
-\int_0^t\!\!\iiint f u^2\widetilde\psi \,dxdydzd\tau,
\end{multline}
where either $\widetilde\psi \equiv \psi$ or $\widetilde\psi\equiv 1$.
\end{lemma}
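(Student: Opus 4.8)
The plan is to obtain \eqref{2.18} as the energy relation that arises from multiplying equation \eqref{2.1} by $-u^2\widetilde\psi(x)$, integrating over $(0,t)\times\Sigma$, integrating by parts in the spatial variables, and transferring the term containing $u_0^3$ to the right-hand side. I would prove it first in the smooth case $u_0\in C_0^\infty(\Sigma)$, $f_0,f_1\in C_0^\infty(\Pi_T)$, taking for $u$ the solution furnished by Lemmas~\ref{L2.1} and~\ref{L2.2}: for such data $u$ is smooth and, together with the derivatives used below, decays in $x$ faster than any power (and, the data being compactly supported, the factors $u$ and $u_x$ decay faster than $e^{-\alpha x}$ as $x\to+\infty$ for every $\alpha>0$), while $\widetilde\psi$ and $\widetilde\psi'$ grow at most exponentially; hence every integral below converges absolutely and every integration by parts is legitimate.

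For the time-derivative term one uses $u_tu^2=\frac13(u^3)_t$, which after integration in $t$ produces the terms with $u^3(t)\widetilde\psi$ and $u_0^3\widetilde\psi$. For the dispersive term, writing $\Delta u_x=(\Delta u)_x$ and integrating by parts in $x$ only (no lateral boundary term occurs), one gets $-\iiint(\Delta u)_xu^2\widetilde\psi\,dxdydz=2\iiint uu_x\,\Delta u\,\widetilde\psi\,dxdydz+\iiint u^2\,\Delta u\,\widetilde\psi'\,dxdydz$. For the term $-\delta\Delta u$ one integrates by parts in each of $x,y,z$; the boundary integrals over $\partial\Sigma$ vanish because $u\big|_{\partial\Sigma}=0$, leaving $\delta\iiint\Delta u\,u^2\widetilde\psi\,dxdydz=-2\delta\iiint u|Du|^2\widetilde\psi\,dxdydz-\delta\iiint u^2u_x\widetilde\psi'\,dxdydz$. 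Keeping the $bu_x$ contribution in the form $-b\iiint u^2u_x\widetilde\psi$ and collecting everything yields \eqref{2.18}; the case $\widetilde\psi\equiv1$ is contained here, the $\widetilde\psi'$ terms simply dropping out.

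To pass to the general case I would approximate $u_0$ in $H_0^{1,\psi(x)}$, $f_0$ in $L_1(0,T;H_0^{1,\psi(x)})$ and $f_1$ in $L_2(0,T;L_2^{\psi(x)})$ by $C_0^\infty$ functions and invoke estimate \eqref{2.14} of Lemma~\ref{L2.5} together with the linearity of the problem; this gives convergence of the corresponding solutions $u_n\to u$ in $C([0,T];H^{1,\psi(x)})$, in $L_2(0,T;H^{2,\psi'(x)})$ and --- since $\delta>0$ is fixed --- also $|D^2u_n|\to|D^2u|$ in $L_2(0,T;L_2^{\psi(x)})$, i.e. $u_n\to u$ in $L_2(0,T;H^{2,\psi(x)})$. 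It then remains to pass to the limit in every term of \eqref{2.18}. In the terms carrying a second-order factor one splits the weight $\widetilde\psi=\widetilde\psi^{1/2}\cdot\widetilde\psi^{1/2}$ (and uses $\widetilde\psi'\le c\widetilde\psi$, valid by admissibility), pairs $\Delta u\,\widetilde\psi^{1/2}\in L_2(0,T;L_2)$ with the lower-order factors, and controls the latter by H\"older's inequality and the embedding $H^1(\mathbb R^3)\hookrightarrow L_q$ for $2\le q\le6$ applied to $u\psi^{1/2}$ and $u_x\psi^{1/2}$ (which belong to $H^1$ for a.e. $t$ because $\psi$ is admissible and $u\in H^{2,\psi(x)}$), or equivalently by the weighted interpolation inequality of Lemma~\ref{L1.1}; the purely cubic terms $\iiint u^3\widetilde\psi$ and $\iiint u^2u_x\widetilde\psi'$ are treated the same way. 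I expect the only real work to be in this last step --- verifying that the trilinear forms $\iiint uu_x\,\Delta u\,\widetilde\psi$, $\iiint u^2\,\Delta u\,\widetilde\psi'$ and $\iiint u|Du|^2\widetilde\psi$ are finite for a.e. $t$, integrable in $t$, and continuous under the above convergences; the derivation of the identity in the smooth case is routine, and for $\widetilde\psi\equiv1$ even this limit passage is markedly simpler.
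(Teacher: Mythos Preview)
Your proposal is correct and follows essentially the same route as the paper: derive \eqref{2.18} in the smooth case by multiplying \eqref{2.1} by $-u^2\widetilde\psi$ and integrating, then pass to the general case via closure using estimate \eqref{2.14}. The paper is terser and, for the limit passage, simply invokes \eqref{1.8} with $q=4$, $\psi_1=\psi_2=\psi$ to record that $u\in C([0,T];L_4^{\psi(x)})$ and $|Du|\in L_2(0,T;L_4^{\psi(x)})$, which is exactly the integrability needed to handle every trilinear term; your appeal to the Sobolev embedding applied to $u\psi^{1/2}$, $u_x\psi^{1/2}$ (equivalently Lemma~\ref{L1.1}) amounts to the same thing.
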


\begin{proof} 
In the smooth case multiplying \eqref{2.1} by $-u^2(t,x,y,z)\widetilde\psi(x)$ and integrating one instantly obtains equality \eqref{2.18}.

In the general case we obtain this equality via closure. Note that by virtue of \eqref{1.8} 
(for $q=4$, $\psi_1=\psi_2=\psi$) if 
$u\in C([0,T];H_0^{1,\psi(x)})\cap L_2(0,T;H^{2,\psi(x)})$ then 
$$
u\in C([0,T];L_4^{\psi(x)}),\qquad |Du|\in L_2(0,T; L_4^{\psi(x)})
$$
and this passage to the limit is easily justified.

\end{proof}

\section{Existence of weak solutions}\label{S3}

Consider the following equation:
\begin{equation}\label{3.1}
u_t+bu_x+\Delta u_x-\delta\Delta u+(g(u))_x=f(t,x,y,z), \quad \delta\in [0,1].
\end{equation}

\begin{definition}\label{D3.1}
Let $u_0\in L_2^{\psi(x)}$ for a certain admissible weight function $\psi(x)\geq 1$ 
$\forall x\in\mathbb R$ such that $\psi'(x)$ is also an admissible weight function, $f\in L_1(0,T; L_2^{\psi(x)})$. 
A function $u \in L_2(0,T;L_2^{\psi(x)})$ is called a weak solution to problem \eqref{3.1}, \eqref{1.2}, \eqref{1.3}, if  for any function $\varphi$, such that $\partial^j_x\varphi \in C([0,T]; L_2^{1/\psi'(x)})$, 
$\partial_t^m\partial_x^j\partial_{y,z}^\nu \varphi \in L_2(0,T;L_2^{1/\psi'(x)})$ for $j\geq 0$, $2m+|\nu|\leq 2$ 
and $\varphi\big|_{t=T}\equiv 0$, $\varphi\big|_{(0,T)\times \partial\Sigma}=0$, the function
$g(u(t,x,y,z))\varphi_x\in L_1(\Pi_T)$ and there holds the following equality:
\begin{multline}\label{3.2}
\int_0^T\!\! \iiint \big[ u(\varphi_t+b\varphi_x+\Delta\varphi_x+\delta\Delta\varphi) +
g(u)\varphi_x + f\varphi\big]\,dxdydzdt \\
+\iiint u_0\varphi\big|_{t=0}\,dxdydz=0.
\end{multline}
\end{definition}

\begin{remark}\label{R3.1}
It is easy to see that if $g\equiv 0$ and a function $u$ is a weak solution to problem \eqref{3.1}, \eqref{1.2}, \eqref{1.3}
in the sense of Definition~\ref{D3.1} then it is a weak solution to this problem in the sense of Definition~\ref{D2.1}.
\end{remark}

\begin{remark}\label{R3.2}
Let $g\in C(\mathbb R)$ and $|g(u)|\leq c(|u|+u^2)$ $\forall u\in \mathbb R$ for a certain constant $c$. Then it easy to see that for any function $u \in L_\infty(0,T;L_2^{\psi(x)})\cap L_2(0,T;H^{1,\psi'(x)})$, where $\psi\geq 1$ is an admissible weight function such that $\psi'$ is also an admissible weight function, and for any function $\varphi$ satisfying the hypothesis of Definition~\ref{D3.1} the function $g(u)\varphi_x\in L_1(\Pi_T)$. In fact, it follows from \eqref{1.8} that
\begin{multline}\label{3.3}
\iiint u^2|\varphi_x|\,dxdydz \leq 
\bigl\|u(\psi')^{1/4}\psi^{1/4}\bigr\|_{L_3}^2 \bigl\|\varphi_x (\psi')^{-1/2}\bigr\|_{L_3} \\ \leq
c\bigl\|(|Du|+|u|)(\psi')^{1/2}\bigr\|_{L_2}\|u\psi^{1/2}\|_{L_2} 
\bigl\|(|D^2\varphi|+|D\varphi|)(1/\psi')^{1/2}\bigr\|_{L_2}.
\end{multline}
\end{remark}

First of all, we prove a lemma on solubility of problem \eqref{3.1}, \eqref{1.2}, \eqref{1.3} for spaces $L_2^{\psi(x)}$ in the  "regularized" case.

\begin{lemma}\label{L3.1}
Let $\delta>0$, $g\in C^1(\mathbb R)$, $g(0)=0$ and $|g'(u)|\leq c\ \forall u\in\mathbb R$. Assume that $u_0\in L_2^{\psi(x)}$ for an admissible weight function $\psi(x)\geq 1$ $\forall x\in\mathbb R$ such that $\psi'(x)$ is also an admissible weight function, $f\in L_1(0,T;L_2^{\psi(x)})$. Then problem \eqref{3.1}, \eqref{1.2}, \eqref{1.3} has a unique weak solution 
$u\in C([0,T];L_2^{\psi(x)})\cap L_2(0,T;H_0^{1,\psi(x)})$.
\end{lemma}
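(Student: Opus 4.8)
The natural approach is to solve \eqref{3.1}, \eqref{1.2}, \eqref{1.3} by a fixed-point argument built on the linear theory of Section~\ref{S2}. Since $\delta>0$ and $g'$ is bounded, I would set up a contraction mapping in the space $Y_T \equiv C([0,T];L_2^{\psi(x)})\cap L_2(0,T;H_0^{1,\psi(x)})$ (or in $C([0,T];L_2)\cap L_2(0,T;H^1_0)$ first and upgrade the weight afterwards). Given $v\in Y_T$, observe that $g(v)\in L_2(0,T;L_2^{\psi(x)})$ because $|g(v)|\le c|v|$ by $g(0)=0$ and $|g'|\le c$; hence the term $(g(v))_x$ can be written as $f_{1x}$ with $f_1=\delta^{-1/2}\cdot\delta^{1/2}g(v)\in L_2(0,T;L_2^{\psi(x)})$, which is exactly the structural form $\delta^{1/2}f_{1x}$ allowed in Lemma~\ref{L2.4} (with $f_2\equiv 0$). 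Define $u=\Lambda v$ to be the unique weak solution to the linear problem \eqref{2.1}, \eqref{1.2}, \eqref{1.3} with right-hand side $f_0 + \delta^{1/2}f_{1x}$, $f_0=f$, $f_1=\delta^{-1/2}g(v)$; uniqueness and the a priori bound \eqref{2.11} come from Lemma~\ref{L2.4} and Lemma~\ref{L2.3} (note $\psi\ge 1\ge\varkappa_{0,\beta}$ fails in general, so for uniqueness of the linear problem I would instead invoke the energy identity \eqref{2.12} directly, which for fixed $\delta>0$ gives uniqueness in the stated class). By \eqref{2.11}, $\Lambda$ maps $Y_T$ into $Y_T$.

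\textbf{Contraction.} For $v_1,v_2\in Y_T$ the difference $w=\Lambda v_1-\Lambda v_2$ solves the linear problem with zero initial data and right-hand side $\delta^{1/2}\bigl(\delta^{-1/2}(g(v_1)-g(v_2))\bigr)_x$. Applying \eqref{2.11} to $w$,
\begin{equation*}
\|w\|_{Y_T}\le c(T)\,\delta^{-1/2}\bigl\|g(v_1)-g(v_2)\bigr\|_{L_2(0,T;L_2^{\psi(x)})}
\le c(T)\delta^{-1/2}c\,\|v_1-v_2\|_{L_2(0,T;L_2^{\psi(x)})},
\end{equation*}
using $|g(v_1)-g(v_2)|\le c|v_1-v_2|$. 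The right-hand side is bounded by $c(T)\delta^{-1/2}c\,T^{1/2}\|v_1-v_2\|_{C([0,T];L_2^{\psi(x)})}\le \tfrac12\|v_1-v_2\|_{Y_T}$ once $T=T_0$ is chosen small (depending on $\delta$, $c$, $\psi$), so $\Lambda$ is a contraction on $Y_{T_0}$ and has a unique fixed point there, which is by construction a weak solution in the sense of Definition~\ref{D3.1} (the identity \eqref{3.2} follows from \eqref{2.10} applied to $\Lambda v=u$ with the substitution $\iint g(u)\varphi_x$ in place of the $f_1$-term, legitimate since $g(u)\varphi_x\in L_1(\Pi_T)$ by Remark~\ref{R3.2}).

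\textbf{Globalization.} To pass from local time $T_0$ to arbitrary $T$, I would derive a global-in-time a priori bound for any weak solution. Multiplying \eqref{3.1} by $2u$ and integrating — rigorously, using Corollary~\ref{C2.1} with $f_0=f$ and the identification of $(g(u))_x$ as $\delta^{1/2}f_{1x}$ — the cubic term $\iint (g(u))_x\,u = -\iint g(u)u_x$ is not sign-definite for general $g$, but $|g(u)u_x|\le c|u||u_x|$ and with the $2\delta\iint|Du|^2$ dissipation on the left one absorbs $2\delta^{1/2}\iint f_1 u_x$ via Cauchy--Schwarz: $2\delta^{1/2}|\iint f_1 u_x|\le \delta\iint u_x^2 + \iint g(u)^2 \le \delta\iint|Du|^2 + c^2\iint u^2$. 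This yields $\|u(t)\|_{L_2}^2 \le \|u_0\|_{L_2}^2 + 2\int_0^t\|f\|_{L_2}\|u\|_{L_2}\,d\tau + c^2\int_0^t\|u\|_{L_2}^2\,d\tau$, whence by Gronwall a bound on $\|u\|_{C([0,T];L_2)}$ and then, returning to \eqref{2.13}, on $\delta^{1/2}\||Du|\|_{L_2(\Pi_T)}$, for all $T$. The weighted bound is obtained the same way from the weighted identity \eqref{2.12}, controlling the cubic contribution $2\int\!\iint f_1(u\psi)_x$ by $\delta\iint|Du|^2\psi + c(\psi)\iint u^2\psi$ using that $|g(u)|\le c|u|$ and that $|\psi'|\le c(1)\psi$. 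Since the local existence time $T_0$ can be taken to depend only on the size of the data at the current time through the above bound and on $\delta$, the solution extends to $[0,T]$ for any $T$; uniqueness on $[0,T]$ follows from the contraction estimate applied on successive short intervals (equivalently, the difference of two solutions satisfies the same linear problem and the Gronwall argument forces it to vanish). The main obstacle is bookkeeping: making the formal energy manipulations rigorous by systematically reducing everything to the already-proved linear identities \eqref{2.12}, \eqref{2.13}, rather than multiplying the nonlinear equation directly, and checking that the weighted estimate \eqref{2.11} is genuinely uniform enough to re-iterate the local construction up to time $T$.
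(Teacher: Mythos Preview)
Your approach is essentially the paper's: define $\Lambda$ via the linear problem of Lemma~\ref{L2.4} with $f_1=\delta^{-1/2}g(v)$, and contract in $Y(\Pi_{t_0})=C([0,t_0];L_2^{\psi(x)})\cap L_2(0,t_0;H_0^{1,\psi(x)})$ using $|g(v_1)-g(v_2)|\le c|v_1-v_2|$ and the factor $t_0^{1/2}$ coming from $\|{\cdot}\|_{L_2(0,t_0;L_2^{\psi})}\le t_0^{1/2}\|{\cdot}\|_{C([0,t_0];L_2^{\psi})}$.

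Your globalization step is correct but unnecessarily heavy. The point you overlook is that the contraction constant $c(T,\delta)\,t_0^{1/2}$ is \emph{independent of the initial data} --- it depends only on the Lipschitz bound $c$ for $g$, on $\delta$, and on $T$ through the constant in \eqref{2.11}. Hence the same $t_0$ works starting from any initial profile, and one simply iterates on subintervals of fixed length to reach $T$; no a~priori energy bound or Gronwall argument is needed. This is exactly what the paper does in one sentence. Your concern about uniqueness for the linear problem (since $\psi\ge 1$ need not dominate $\varkappa_{0,\beta}$) is legitimate, and your proposed fix via the energy identity \eqref{2.12} with $\delta>0$ is the right one.
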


\begin{proof}
We apply the contraction principle. For $t_0\in(0,T]$ define a mapping $\Lambda$ on a set $Y(\Pi_{t_0})=C([0,t_0];L_2^{\psi(x)})\cap L_2(0,t_0;H_0^{1,\psi(x)})$ as follows: $u=\Lambda v\in Y(\Pi_{t_0})$ is a solution to a linear problem
\begin{equation}\label{3.4}
u_t+bu_x+\Delta u_x-\delta\Delta u=f-(g(v))_x
\end{equation}
in $\Pi_{t_0}$ with boundary conditions \eqref{1.2}, \eqref{1.3}.

Note that $|g(v)|\leq c|v|$ and, therefore,
\begin{equation}\label{3.5}
\|g(v)\|_{L_2(0,t_0;L_2^{\psi(x)})}\leq c||v||_{L_2(0,t_0;L_2^{\psi(x)})}<\infty.
\end{equation}

Thus, according to Lemma~\ref{L2.4} (where $f_1\equiv\delta^{-1/2}g(v)$) the mapping $\Lambda$ exists. Moreover, for functions $v,\widetilde{v}\in Y(\Pi_{t_0})$
$$
\|g(v)-g(\widetilde{v})\|_{L_2(0,t_0;L_2^{\psi(x)})}\leq c\|v-\widetilde{v}\|_{L_2(0,t_0;L_2^{\psi(x)})} \leq ct_0^{1/2}\|v-\widetilde{v}\|_{C([0,t_0];L_2^{\psi(x)})}.
$$
As a result, according to inequality \eqref{2.11}
$$
\|\Lambda v-\Lambda\widetilde{v}\|_{Y(\Pi_{t_0})}\leq 
c(T,\delta)t_0^{1/2}\|v-\widetilde{v}\|_{Y(\Pi_{t_0})}.
$$
Since the constant in the right side of this equality is uniform with respect to $u_0$, one can construct the solution on the whole time segment $[0,T]$ by the standard argument.
\end{proof}

Now we pass to the proof of Theorem~\ref{T1.1}.

\begin{proof}[Proof of Theorem~\ref{T1.1}] 
For $h\in (0,1]$ consider a set of initial-boundary value problems in $\Pi_T$
\begin{equation}\label{3.6}
u_t+bu_x+\Delta u_x-h\Delta u+(g_h(u))_x=f(t,x,y,z)
\end{equation}
with boundary conditions \eqref{1.2}, \eqref{1.3}, where
\begin{equation}\label{3.7}
g_h(u)\equiv\int_0^u\Bigl[\theta\eta(2-h|\theta|)+\frac{2\sgn\theta}{h}\eta(h|\theta|-1)\Bigr]\,d\theta.
\end{equation}
Note that $g_h(u)\equiv u^2/2$ if $|u|\leq 1/h$, $|g'_h(u)|\leq 2/h\ \forall u\in\mathbb R$ and $|g'_h(u)|\leq 2|u|$ uniformly with respect to $h$. 

According to Lemma~\ref{L3.1} there exists a unique solution to each of these problems 
$u_h\in C([0,T];L_2^{\psi(x)})\cap L_2(0,T;H_0^{1,\psi(x)})$. Note that similarly to \eqref{3.5} 
$g_h(u_h) \in L_2(0,T;L_2^{\psi(x)})$. 

Next, establish estimates for functions $u_h$ uniform with respect to $h$. 

Write down corresponding equality \eqref{2.13} for functions $u_h$ (we omit the index $h$ in intermediate steps for simplicity):
\begin{multline}\label{3.8}
\iiint u^2 \,dxdydz
+2h\int_0^t\!\iiint |Du|^2\,dxdydzd\tau = \iiint u_0^2 \,dxdydz \\
+2\int_0^t\!\iiint f u \,dxdydzd\tau 
-2\int_0^t\!\iiint g'(u)u_x u \,dxdydzd\tau.
\end{multline}
Since
\begin{equation}\label{3.9}
g'(u)u_xu=\Bigl(\int_0^u g'(\theta)\theta \,d\theta\Bigr)_x \equiv \bigl(g'(u)u\bigr)^*_x,
\end{equation}
where $g^*(u)\equiv \displaystyle \int_0^u g(\theta)\,d\theta$ denotes the primitive for $g$ such that $g^*(0)=0$,
we have that $\iiint g'(u)u_x u\,dxdydz=0$ and equality \eqref{3.8} yields that 
\begin{equation}\label{3.10}
\|u_h\|_{C([0,T];L_2)}+h^{1/2}\|u_h\|_{L_2(0,T;H^1)}\leq c
\end{equation}
uniformly with respect to $h$ (and also uniformly with respect to $\Omega$).

Next, write down corresponding equality \eqref{2.12}, then with the use of \eqref{3.9}
\begin{multline}\label{3.11}
\iiint u^2(t,x,y,z)\psi(x)\,dxdydz+\int_0^t\! \iiint (3u_x^2+u_y^2+u_z^2)\psi'\,dxdydzd\tau \\
+2h \int_0^t \!\! \iiint |Du|^2\psi\,dxdydzd\tau -
\int_0^t \!\! \iiint u^2\cdot (b\psi'+\psi'''+h\psi'')\,dxdydzd\tau \\ =
\iiint u_0^2\psi\,dxdydz +2\int_0^t\!\iiint f u\psi \,dxdydzd\tau \\
+2\int_0^t\! \iiint (g'(u)u)^*\psi' \,dxdydzd\tau.
\end{multline}

Apply interpolating inequality \eqref{1.8} for $k=1$, $m=0$, $q=4$, $\psi_1=\psi_2\equiv\psi'$:
\begin{multline}\label{3.12}
\Bigl|\iiint (g'(u)u )^*\psi'\,dxdydz\Bigr|
\leq \iiint |u|^3\psi'\,dxdydz \\
\leq \Bigl(\iiint u^2\,dxdydz\Bigr)^{1/2} \Bigl(\iiint|u|^4(\psi')^2\,dxdydz\Bigr)^{1/2}
\leq c\Bigl(\iiint u^2\,dxdydz\Bigr)^{1/2} \\ \times \Bigl[\Bigl(\iiint |Du|^2\psi'\,dxdydz\Bigr)^{3/4}
\Bigl(\iiint u^2\psi' \,dxdydz\Bigr)^{1/4}
+\iiint u^2\psi' \,dxdydz\Bigr]
\end{multline}
(note that here the constant $c$ is also uniform with respect to $\Omega$).
Since the norm of the solution in the space $L_2$ is already  estimated in \eqref{3.10}, it follows from \eqref{3.11} and \eqref{3.12} that
\begin{equation}\label{3.13}
\|u_h\|_{C([0,T];L_2^{\psi(x)})}
+\bigl\| |Du_h| \bigr\|_{L_2(0,T;L_2^{\psi'(x)})}
+h^{1/2}\|u_h\|_{L_2(0,T;H^{1,\psi(x)})}\leq c.
\end{equation}

Finally, write down the analogue of \eqref{3.11}, where $\psi(x)$ is substituted by \linebreak $\rho_0(x-x_0)$ for any $x_0\in\mathbb R$. Then it easily follows that (see \eqref{1.6})
\begin{equation}\label{3.14}
\lambda (|Du_h|;T)\leq c.
\end{equation}

In particular, $\|u_h\|_{L_2(0,T;H^1(Q_n))}\leq c(n)$ for any bounded domain $Q_n=(-n,n)\times \Omega$.
Since $|g_h(u)|\leq u^2$ we have that $\|g_h(u_h)\|_{L_\infty(0,T;L_1(Q_n))}\leq c(n)$. Using the well-known embedding $L_1(Q_n)\subset H^{-2}(Q_n)$ we first derive that $\|g_h(u_h)\|_{L_\infty(0,T;H^{-2}(Q_n))}\leq c(n)$, and then according to equation \eqref{3.1} itself that uniformly with respect to $h$
$$
\|u_{ht}\|_{L_1(0,T;H^{-3}(Q_n))}\leq c(n).
$$
Applying the compactness embedding theorem of evolutionary spaces from \cite{S2} we obtain that the set $\{u_h\}$ is precompact in $L_2((0,T)\times Q_n)$ for all $n$.

Now show that if $u_h\to u$ in $L_2((0,T)\times Q_n)$ for some sequence $h\to 0$, then $g_h(u_h)\to u^2/2$ in $L_1((0,T)\times Q_n)$.
Indeed,
\begin{multline*}
|g_h(u_h)-u^2/2|\leq |g_h(u_h)-g_h(u)|+|g_h(u)-u^2/2| \\
\leq 2(|u_h|+|u|)|u_h-u|+|g_h(u)-u^2/2|,
\end{multline*}
where $|g_h(u)-u^2/2|\leq 2u^2\in L_1((0,T)\times Q_n)$ and$g_h(u)\to u^2/2$ pointwise.

As a result, the required solution is constructed in a standard way as the limit of the solutions $u_h$ when $h\to 0$ 
(equality \eqref{3.2} is first derived for the functions $\varphi\eta(n-|x|)$ with consequent passage to the limit when $n\to +\infty$).
\end{proof}

\begin{remark}\label{R3.3}
Theorem~\ref{T1.1} remains valid if $\partial\Omega\in C^2$ but for simplicity we do not present here the corresponding argument.
\end{remark}

We now proceed to solutions in spaces $H_0^{1,\psi(x)}$ and first estimate a lemma analogous to Lemma~\ref{L3.1}.

\begin{lemma}\label{L3.2}
Let $\delta>0$, $g(u)\equiv u^2/2$. Assume that $u_0\in H_0^{1,\psi(x)}$ for an admissible weight function 
$\psi(x)\geq 1$ $\forall x\in\mathbb R$ such that $\psi'(x)$ is also an admissible weight function, 
$f\in L_1(0,T;H_0^{1,\psi(x)})$. Then problem \eqref{3.1}, \eqref{1.2}, \eqref{1.3} has a unique weak solution $u\in C([0,T];H_0^{1,\psi(x)})\cap L_2(0,T;H^{2,\psi(x)})$.
\end{lemma}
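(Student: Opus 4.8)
The plan is to prove Lemma~\ref{L3.2} by the contraction mapping principle, in close analogy with the proof of Lemma~\ref{L3.1}, but now working in the stronger space $Z(\Pi_{t_0})=C([0,t_0];H_0^{1,\psi(x)})\cap L_2(0,t_0;H^{2,\psi(x)})$ and using Lemma~\ref{L2.5} (and Corollary~\ref{C2.2}) as the linear solver rather than Lemma~\ref{L2.4}. For $t_0\in(0,T]$ and $v\in Z(\Pi_{t_0})$, define $u=\Lambda v$ to be the weak solution of the linear problem $u_t+bu_x+\Delta u_x-\delta\Delta u=f-(vv_x)=f-\tfrac12(v^2)_x$ with data \eqref{1.2}, \eqref{1.3}. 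The term $\tfrac12(v^2)_x$ must be split as $f_{0}+\delta^{1/2}f_1$ in the form demanded by Lemma~\ref{L2.5}; the natural choice is $f_0\equiv 0$ and $\delta^{1/2}f_1\equiv-\tfrac12(v^2)_x$, i.e. $f_1\equiv-\tfrac1{2\delta^{1/2}}(v^2)_x$, so that I need $(v^2)_x\in L_2(0,t_0;L_2^{\psi(x)})$. (Alternatively one could put the quadratic term into $f_0\in L_1(0,t_0;H^{1,\psi(x)})$, which needs $v^2\in L_1(0,t_0;H^{1,\psi(x)})$; either works, but the $f_1$-route is cleaner since $\delta$ is fixed and positive here.)

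The key estimates are the nonlinear bounds on $v\mapsto vv_x$. Using the interpolation inequality \eqref{1.8} with $k=2$, $m=1$, $q=2$ (or $q=4$, $m=0$) together with $v\in C([0,t_0];H^{1,\psi(x)})\cap L_2(0,t_0;H^{2,\psi(x)})$, I get $|Dv|\in L_2(0,t_0;L_2^{\psi(x)})$ and $v\in L_\infty(0,t_0;L_\infty)$-type control (more precisely $v\in L_4(0,t_0;L_\infty)$ via Agmon/Gagliardo–Nirenberg in $3$D applied to the already-weighted quantities, as in Remark~\ref{R3.2} and the proof of Lemma~\ref{L2.6}), so that
\[
\|(v^2)_x\psi^{1/2}\|_{L_2(0,t_0;L_2)}\le 2\|v\|_{L_4(0,t_0;L_\infty)}\bigl\||Dv|\psi^{1/2}\bigr\|_{L_4(0,t_0;L_2)}\le c\,t_0^{\epsilon}\|v\|_{Z(\Pi_{t_0})}^2
\]
for some $\epsilon>0$, where the positive power of $t_0$ comes from interpolating the time-integrability exponents against the finite length of the interval. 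The analogous difference estimate, using $v^2-\widetilde v^2=(v+\widetilde v)(v-\widetilde v)$ and the product/Leibniz rule, gives $\|(v^2-\widetilde v^2)_x\psi^{1/2}\|_{L_2(0,t_0;L_2)}\le c\,t_0^{\epsilon}\bigl(\|v\|_{Z}+\|\widetilde v\|_{Z}\bigr)\|v-\widetilde v\|_{Z}$. Feeding these into estimate \eqref{2.14} of Lemma~\ref{L2.5} shows that $\Lambda$ maps a ball of $Z(\Pi_{t_0})$ into itself and is a contraction there once $t_0$ is small enough, with a smallness threshold on $t_0$ depending only on $T$, $\delta$, the constants attached to $\psi$ and $\Omega$, the data norm $\|u_0\|_{H^{1,\psi(x)}}+\|f\|_{L_1(0,T;H^{1,\psi(x)})}$, and the chosen radius.

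Having produced a local solution on $[0,t_0]$, the final step is to iterate to reach $[0,T]$. This is where the only real subtlety lies: to restart the fixed-point argument at $t_0$ I need an a priori bound on $\|u(t_0,\cdot)\|_{H^{1,\psi(x)}}$ that does not blow up, so that the local existence time stays bounded below. Unlike the $L_2$-level argument in Lemma~\ref{L3.1}, where the contraction constant was uniform in $u_0$, here the $H^1$-contraction time a priori shrinks with the size of the data. The remedy is the energy identity: from Corollary~\ref{C2.2} (or \eqref{2.17}) with $f_0\equiv f$, $f_1\equiv-\delta^{1/2}\,?$ — more precisely, testing \eqref{3.1} with $g(u)=u^2/2$ against $-2\Delta u$ and using that $\iint u u_x\Delta u\,dx$ and the cubic terms are controlled via \eqref{1.8} and \eqref{2.17} — one obtains a Gronwall inequality for $\iiint|Du|^2\psi\,dx$ with $L_1(0,T)$-coefficients depending only on the data (and on the already-available uniform $L_2^{\psi(x)}$ and $L_2$ bounds from Theorem~\ref{T1.1}/Corollary~\ref{C2.1}), hence a global-in-$[0,T]$ bound on $\|u\|_{C([0,T];H^{1,\psi(x)})}$. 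This a priori bound makes the local existence time uniform, so finitely many iterations cover $[0,T]$. Uniqueness in the stated class follows from the contraction property on each subinterval (or, alternatively, by a direct energy estimate on the difference of two solutions, which is legitimate because both solutions lie in $L_2(0,T;H^{2,\psi(x)})$, exactly the regularity that makes the manipulations in Lemma~\ref{L2.6} rigorous). I expect the bookkeeping of the weighted product estimates and the extraction of the positive power of $t_0$ to be the main technical obstacle, while the linear theory of Section~\ref{S2} does all the heavy lifting.
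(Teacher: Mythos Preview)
Your overall architecture is the paper's: contraction in $Z(\Pi_{t_0})=C([0,t_0];H_0^{1,\psi})\cap L_2(0,t_0;H^{2,\psi})$ via Lemma~\ref{L2.5} with the nonlinearity placed in the $f_1$ slot, a product estimate yielding a positive power of $t_0$, and then an a~priori $H^{1,\psi}$ bound to iterate. The paper obtains $t_0^{1/8}$ by the $L_4\times L_4$ splitting (see \eqref{3.15}) rather than your $L_\infty\times L_2$ route, but that difference is cosmetic.

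There is, however, a genuine gap in your a~priori bound. Testing \eqref{3.1} only against $-2\Delta u$ (as you propose) produces the term $2\!\iiint u u_x\Delta u\,\widetilde\psi$, and in the unweighted case this equals $-\!\iiint u_x|Du|^2$. Estimating this via \eqref{1.8} gives at best
\[
\Bigl|\iiint u_x|Du|^2\Bigr|\le c\||D^2u|\|_{L_2}^{3/2}\||Du|\|_{L_2}^{3/2}\le \varepsilon\delta\||D^2u|\|_{L_2}^2+C(\delta)\||Du|\|_{L_2}^6,
\]
which yields $y'\le a+Cy^3$ for $y=\||Du|\|_{L_2}^2$, not a linear Gronwall; the $L_2$ and $L_2^{\psi}$ bounds you invoke do not help here. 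The paper closes this by also applying Lemma~\ref{L2.6} (multiplication by $-u^2\widetilde\psi$) and \emph{adding} the two identities: the worst term $2\!\iiint uu_x\Delta u\,\widetilde\psi$ then cancels between \eqref{3.21} and \eqref{3.22}, leaving in \eqref{3.23} only terms like $\iiint u u_x^2\widetilde\psi'$, $\iiint u^2\Delta u\,\widetilde\psi'$, $\delta\!\iiint u|Du|^2\widetilde\psi$, etc., each of which is controlled by $\||D^2u|\|_{L_2^{\widetilde\psi'}}^{3/4}$ (or $\delta\||D^2u|\|_{L_2^{\widetilde\psi}}^{3/4}$) times powers of already bounded quantities. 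This is precisely the second conservation law \eqref{1.4} at work, and without it the $H^1$ estimate does not globalize.
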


\begin{proof}
Introduce for $t_0\in (0,T]$ a space $Y_1(\Pi_{t_0})=C([0,t_0];H_0^{1,\psi(x)})\cap L_2(0,t_0;H^{2,\psi(x)})$ and define a mapping $\Lambda$ on it in the same way as in the proof of Lemma~\ref{L3.1} with the substitution of $Y(\Pi_{t_0})$ by $Y_1(\Pi_{t_0})$ and equation \eqref{3.4} by an equation
$$
u_t+bu_x+\Delta u_x -\delta\Delta u = f - vv_x.
$$
By virtue of \eqref{1.8} (for $\psi_1=\psi_2\equiv \psi\geq 1$)
\begin{multline}\label{3.15}
\|vv_x\|_{L_2(0,t_0;L_2^{\psi(x)})} \leq
\Bigl[\int_0^{t_0}\|v_x\psi^{1/2}\|_{L_4}^2 \|v\psi^{1/2}\|_{L_4}^2\,dt\Bigr]^{1/2} \\ \leq
c\Bigl[\int_0^{t_0} \Bigl(\bigl\||D v_x|\bigr\|_{L_2^{\psi(x)}}^{3/2}\|v_x\|_{L_2^{\psi(x)}}^{1/2}+
\|v_x\|_{L_2^{\psi(x)}}^{2}\Bigr) \bigl\| |Dv|+|v|\bigr\|_{L_2^{\psi(x)}}^{2}\,dt\Bigr]^{1/2} \\ \leq
c_1 t_0^{1/8} \|v\|_{L_2(0,t_0;H^{2,\psi(x)})}^{3/4}\|v\|_{C([0,t_0];H^{1,\psi(x)})}^{5/4} \leq
c_1 t_0^{1/8} \|v\|_{Y_1(\Pi_{t_0})}^2
\end{multline}
and similarly
\begin{equation}\label{3.16}
\|vv_x-\widetilde v\widetilde v_x\|_{L_2(0,t_0;L_2^{\psi(x)})} \leq
c t_0^{1/8} \bigl(\|v\|_{Y_1(\Pi_{t_0})}+\|\widetilde v\|_{Y_1(\Pi_{t_0})}\bigr)
\|v-\widetilde v\|_{Y_1(\Pi_{t_0})}.
\end{equation}
In particular, the hypothesis of Lemma~\ref{L2.5} is satisfied (for $f_1\equiv -vv_x$) and, therefore, the mapping $\Lambda$ exists. Moreover, inequalities \eqref{2.14}, \eqref{3.15}, \eqref{3.16} provide that
\begin{equation}\label{3.17}
\|\Lambda v\|_{Y_1(\Pi_{t_0})}\leq c(T,\delta)\left(\|u_0\|_{H^{1,\psi(x)}}+ 
\|f\|_{L_1(0,T;H^{1,\psi(x)})} +
t_0^{1/8}\|v\|_{Y_1(\Pi_{t_0})}^2\right),
\end{equation}
\begin{equation}\label{3.18}
\|\Lambda v -\Lambda\widetilde v\|_{Y_1(\Pi_{t_0})}\leq c(T,\delta)
t_0^{1/8}\left(\|v\|_{Y_1(\Pi_{t_0})}+\|\widetilde v\|_{Y_1(\Pi_{t_0})}\right)
\|v-\widetilde v\|_{Y_1(\Pi_{t_0})}.
\end{equation}
Existence of unique weak solution to the considered problem in the space $Y_1(\Pi_{t_0})$ on the time interval $[0,t_0]$ depending on $\|u_0\|_{H^{1,\psi(x)}}$ follows from \eqref{3.17}, \eqref{3.18} by the standard argument.

Now we estimate the following a priori estimate: if $u\in Y_1(\Pi_{T'})$ is a solution to the considered problem for some $T'\in (0,T]$ then uniformly with respect to $\delta$
\begin{equation}\label{3.19}
\|u\|_{C([0,T'];H^{1,\psi(x)})}\leq c(T,\|u_0\|_{H^{1,\psi(x)}}, 
\|f\|_{L_1(0,T;H^{1,\psi(x)})}).
\end{equation} 

Note that similarly to \eqref{3.13}
\begin{equation}\label{3.20}
\|u\|_{C([0,T'];L_2^{\psi(x)})} \leq c(T,\|u_0\|_{L_2^{\psi(x)}},\|f\|_{L_1(0,T;L_2^{\psi(x)})}).
\end{equation}

Let either $\widetilde\psi\equiv \psi$ or $\widetilde\psi\equiv 1$
Apply Corollary~\ref{C2.2} for $\widetilde\psi\equiv 1$ or Lemma~\ref{L2.5} for $\widetilde\psi\equiv \psi$, where $f_1\equiv -\delta^{-1/2}uu_x$, then it follows from \eqref{2.17} or \eqref{2.15} that 
\begin{multline}\label{3.21}
\iiint|Du(t,x,y,z)|^2\widetilde\psi(x)\,dxdydz
+c_0\int_0^t\!\iiint|D^2u|^2\cdot(\widetilde\psi'+\delta\widetilde\psi)\,dxdydzd\tau \\
\leq \iiint|Du_0|^2\widetilde\psi \,dxdydz+
c\int_0^t\!\iiint |Du|^2\widetilde\psi\,dxdydzd\tau \\
+2\int_0^t\!\iint(f_{x}u_x+f_{y}u_y+f_{z}u_z)\widetilde\psi \,dxdydzd\tau \\
+2\int_0^t\iiint uu_x[(u_x\widetilde\psi)_x+u_{yy}\widetilde\psi+u_{zz}\widetilde\psi]\,dxdydzd\tau.
\end{multline}
Apply Lemma~\ref{L2.6} then it follows from \eqref{2.18} that
\begin{multline}\label{3.22}
-\frac 13 \iiint u^3(t,x,y,z)\widetilde\psi(x)\,dxdydz
-b\int_0^t\!\!\iiint u^2u_x\widetilde\psi\,dxdydzd\tau \\
+2\int_0^t\!\!\iiint uu_x \Delta u\widetilde\psi \,dxdydzd\tau 
+\int_0^t\!\!\iiint u^2 \Delta u\widetilde\psi' \,dxdydzd\tau \\
-2\delta\int_0^t\!\!\iiint u |Du|^2\widetilde\psi\,dxdydzd\tau 
-\delta\int_0^t\!\!\iiint u^2u_x\widetilde\psi'\,dxdydzd\tau \\
=-\frac 13 \iiint u_0^3\widetilde\psi \,dxdydz 
-\int_0^t\!\!\iiint (f-uu_x) u^2\widetilde\psi \,dxdydzd\tau.
\end{multline}
Summing \eqref{3.21} and \eqref{3.22} provides an inequality
\begin{multline}\label{3.23}
\iiint\Bigl(|Du|^2-\frac {u^3}3\Bigr)\widetilde\psi \,dxdydz
+c _0\int_0^t\!\!\iiint|D^2u|^2\cdot(\widetilde\psi'+\delta\widetilde\psi) \,dxdydzd\tau \\
\leq \iint\Bigl(|Du_0|^2-\frac{u_0^3}3\Bigr)\widetilde\psi \,dxdydz
+c\int_0^t\!\!\iint|Du|^2\widetilde\psi \,dxdydzd\tau \\
+2\int_0^t\!\!\iiint (f_xu_x+f_yu_y+f_zu_z)\widetilde\psi \,dxdydzd\tau 
-\int_0^t\!\!\iiint fu^2\widetilde\psi \,dxdydzd\tau \\ 
+2\int_0^t\!\!\iiint u u^2_x\widetilde\psi' \,dxdydzd\tau
-\int_0^t\!\!\iiint u^2\Delta u\widetilde\psi' \,dxdydzd\tau \\
+2\delta\int_0^t\!\!\iiint u|Du|^2\widetilde\psi \,dxdydzd\tau 
-\frac 13\int_0^t\!\!\iiint u^3\cdot(\delta\widetilde\psi''+b\widetilde\psi') \,dxdydzd\tau \\ 
-\frac 14\int_0^t\!\!\iiint u^4\widetilde\psi' \,dxdydzd\tau.
\end{multline}
By virtue of \eqref{1.8} and \eqref{3.20} similarly to \eqref{3.12}
\begin{multline*}
\iiint |u|^3\widetilde\psi\,dxdydz 
\leq \Bigl(\iiint u^2\,dxdydz\Bigr)^{1/2} \Bigl(\iiint |u|^4 \widetilde\psi^2\,dxdydz\Bigr)^{1/2} \\
\leq c\Bigl[\Bigl(\iiint |Du|^2\widetilde\psi\,dxdydz\Bigr)^{3/4}+1\Bigr].
\end{multline*}
Next,
\begin{multline*}
\iiint |f|u^2\widetilde\psi \,dxdydz\leq c\Bigl(\iiint f^2 \,dxdydz\Bigr)^{1/2} 
\Bigl(\iiint u^4\widetilde\psi^2dxdydz\Bigr)^{1/2} \\
\leq c_1\Bigl(\iint f^2 \,dxdydz\Bigr)^{1/2} 
\Bigl[\Bigl(\iiint |Du|^2\widetilde\psi dxdydz\Bigr)^{3/4}+1\Bigr],
\end{multline*}
\begin{multline*}
\iiint |u|\cdot |Du|^2\widetilde\psi'\,dxdydz \leq \Bigl(\iiint u^2\,dxdydz\Bigr)^{1/2} 
\Bigl(\iiint |Du|^4(\widetilde\psi')^2\,dxdydz\Bigr)^{1/2} \\
\leq c\Bigl[\Bigl(\iiint |D^2 u|^2\widetilde\psi' dxdydz\Bigr)^{3/4}\Bigl(\iiint |Du|^2\widetilde\psi dxdydz\Bigr)^{1/4}+
\iiint |Du|^2\widetilde\psi dxdydz\Bigr],
\end{multline*}
\begin{multline*}
\delta\iiint |u|\cdot|Du|^2\widetilde\psi \,dxdydz \leq \delta\Bigl(\iiint u^2\,dxdydz\Bigr)^{1/2} 
\Bigl(\iiint |Du|^4 \widetilde\psi^2\,dxdydz\Bigr)^{1/2} \\
\leq c\delta\Bigl[\Bigl(\iiint |D^2 u|^2\widetilde\psi dxdydz\Bigr)^{3/4}
\Bigl(\iiint |Du|^2\widetilde\psi dxdydz\Bigr)^{1/4}+
\iiint |Du|^2\widetilde\psi dxdydz\Bigr].
\end{multline*}
Choosing $\widetilde\psi\equiv 1$ we derive from \eqref{3.23} with the use of these estimates that
\begin{equation}\label{3.24}
\|u\|_{C([0,T'];H^1)} \leq c(T,\|u_0\|_{H^1},\|f\|_{L_1(0,T;H^1)}).
\end{equation}
Finally, since
\begin{multline*}
\iiint u^4\psi'\,dxdydz \leq c\Bigl(\iiint u^4\,dxdydz\Bigr)^{1/2}\Bigl(\iiint u^4\psi^2\,dxdydz\Bigr)^{1/2} \\
\leq c_1 \iint \bigl(|Du|^2+u^2\bigr)\,dxdydz \iint \bigl(|Du|^2+u^2\bigr)\psi\,dxdydz, 
\end{multline*}
choosing in \eqref{3.23} $\widetilde\psi\equiv\psi$ with the use of \eqref{3.24} we obtain estimate \eqref{3.19}.

Inequalities \eqref{3.17} and \eqref{3.18} allow us to construct a solution to the considered problem locally in time by the contraction while estimate \eqref{3.19} enables us to extend it for the whole time segment $[0,T]$.
\end{proof}

At the end of this section we present the proof of the part of Theorem~\ref{T1.2} concerning existence of solutions.

\begin{proof}[Proof of Theorem~\ref{T1.2}, existence] 
For $h\in (0,1]$ consider a set of initial-boundary value  problems in $\Pi_T$ 
\begin{equation}\label{3.25}
u_t+bu_x+\Delta u_x -h\Delta u +uu_x =f(t,x,y,z)
\end{equation}
with boundary conditions \eqref{1.2}, \eqref{1.3}. It follows from Lemma~\ref{L3.2} that unique solutions to these problems $u_h\in C([0,T]; H_0^{1,\psi(x)})\cap L_2(0,T;H^{2,\psi(x)})$ exist. Moreover, according to \eqref{3.19} uniformly with respect to $h$
\begin{equation}\label{3.26}
\|u_h\|_{C([0,T];H^{1,\psi(x)})} \leq c.
\end{equation}
Next, inequality \eqref{3.23} in the case $\widetilde\psi\equiv \psi$ applied to the functions $u_h$ provides that uniformly with respect to $h$
\begin{equation}\label{3.27}
\bigl\| |D^2 u_h|\bigr\|_{L_2(0,T;L_2^{\psi'(x)})} \leq c.
\end{equation}
Finally, note that inequality \eqref{3.23} obviously holds for $\widetilde\psi\equiv \rho_0(x-x_0)$ for any 
$x_0\in\mathbb R$, therefore, similarly to \eqref{3.14} 
\begin{equation}\label{3.28}
\lambda(|D^2 u_h|;T)\leq c.
\end{equation}

The end of the proof is exactly the same as for Theorem~\ref{T1.1}.
\end{proof}

\section{Continuous dependence of weak solutions}\label{S4}

Present a theorem from which the result of Theorem~\ref{T1.2} on uniqueness of weak solutions follows.

\begin{theorem}\label{T4.1}
Let $u_0,\widetilde{u}_0\in H_0^{1,\alpha}$, $f,\widetilde{f}\in L_1(0,T;H_0^{1,\alpha})$ for some $\alpha\geq 3/4$, $u,\widetilde{u}$ be weak solutions to corresponding problems \eqref{1.1}--\eqref{1.3} from the class $X^{1,\alpha}(\Pi_T)$. Then for any $\beta>0$
\begin{multline}\label{4.1}
\|u-\widetilde{u}||_{L_\infty(0,T;L_2^{\varkappa_{\alpha,\beta}(x)})}+
\bigl\| |D(u-\widetilde{u})|\bigr\|_{L_2(0,T;L_2^{\varkappa_{\alpha-1/2,\beta}(x)})} \\
\leq c\Bigl(\|(u_0-\widetilde{u}_0)\|_{L_2^{\varkappa_{\alpha,\beta}(x)}} +
\|(f-\widetilde{f})\|_{L_1(0,T;L_2^{\varkappa_{\alpha,\beta}(x)})}\Bigr),
\end{multline}
where the constant $c$ depends on the norms of the functions $u,\widetilde{u}$ in the space $L_\infty(0,T;H^{1,3/4})$.
\end{theorem}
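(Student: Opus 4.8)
The plan is to work with the difference $w \equiv u - \widetilde u$ and the difference of right-hand sides $F \equiv f - \widetilde f$. Subtracting the two copies of equation \eqref{1.1}, the function $w$ solves the linear-in-$w$ problem
\begin{equation*}
w_t + bw_x + \Delta w_x + \tfrac12\bigl((u+\widetilde u)w\bigr)_x = F,
\end{equation*}
with zero initial and boundary data replaced by $w|_{t=0} = u_0 - \widetilde u_0$ and $w|_{(0,T)\times\partial\Sigma} = 0$. The idea is to obtain \eqref{4.1} by a weighted energy estimate: multiply the equation by $2w\varkappa_{\alpha,\beta}(x)$, integrate over $\Sigma$, and control the nonlinear commutator term. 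Since the solutions $u,\widetilde u$ are only known to lie in $X^{1,\alpha}(\Pi_T)$, all these manipulations must in fact be justified on the regularized level used to construct the solutions (equation \eqref{3.25} with the vanishing-viscosity parameter $h$), following exactly the scheme already employed in Lemmas~\ref{L2.4}--\ref{L2.6}; I would state the computation formally and remark that it is legitimized by closure, as done repeatedly above.

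The key estimate is the treatment of the term coming from the nonlinearity. After integration by parts, the dangerous contribution is
\begin{equation*}
-\iiint (u+\widetilde u)\, w\, w_x\, \varkappa_{\alpha,\beta}\,dxdydz
= \tfrac12 \iiint w^2 \bigl((u+\widetilde u)\varkappa_{\alpha,\beta}\bigr)_x\,dxdydz,
\end{equation*}
which splits into a piece with $w^2 (u_x+\widetilde u_x)\varkappa_{\alpha,\beta}$ and a piece with $w^2 (u+\widetilde u)\varkappa'_{\alpha,\beta}$. Each is estimated by Hölder and the interpolation inequality \eqref{1.8}: one writes $\iiint w^2 |Du| \,\varkappa_{\alpha,\beta}$ (say) as a product of $\|w\varkappa_{\alpha,\beta}^{1/2}\|_{L_4}^2$ and $\||Du|\,\varkappa_{\alpha,\beta}^{1/2}\|_{L_2}$ or similar, then invokes \eqref{1.8} with $k=1$, $q=4$ to bound $\|w\varkappa_{\alpha,\beta}^{1/2}\|_{L_4}^2$ by $\||Dw|\,\varkappa_{\alpha-1/2,\beta}^{1/2}\|_{L_2}\|w\varkappa_{\alpha,\beta}^{1/2}\|_{L_2}$ plus lower order; the factor involving $u,\widetilde u$ is absorbed into the constant using the $L_\infty(0,T;H^{1,3/4})$ bound (this is exactly why $\alpha \geq 3/4$ is required — it guarantees $|D(u+\widetilde u)|$ is controlled in the relevant weighted $L_2$ on $\Sigma_+$, cf.\ the remark on $\varkappa_{\alpha-1/2,\beta}$ just before Lemma~\ref{L1.1}). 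The good terms on the left from multiplying by the weight are, as in \eqref{2.12}, $\iiint(3w_x^2 + w_y^2 + w_z^2)\varkappa'_{\alpha,\beta}$, which supplies precisely the $L_2(0,T;L_2^{\varkappa_{\alpha-1/2,\beta}(x)})$ norm on the left of \eqref{4.1} after noting $\varkappa'_{\alpha,\beta} \sim \varkappa_{\alpha-1/2,\beta}$; the bad sign term $-b\iiint w^2\varkappa'_{\alpha,\beta}$ and the $\iiint w^2 \varkappa'''_{\alpha,\beta}$ term are harmless since $\varkappa'''_{\alpha,\beta} \leq c\,\varkappa'_{\alpha,\beta}$ and can be hidden or moved to the Gronwall side.

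After these bounds one arrives at a differential inequality of the form
\begin{equation*}
\frac{d}{dt}\iiint w^2\varkappa_{\alpha,\beta}\,dxdydz
+ c_0\iiint |Dw|^2\varkappa_{\alpha-1/2,\beta}\,dxdydz
\leq \gamma(t)\iiint w^2\varkappa_{\alpha,\beta}\,dxdydz
+ 2\iiint |F||w|\varkappa_{\alpha,\beta}\,dxdydz,
\end{equation*}
with $\gamma\in L_1(0,T)$ depending on the $L_\infty(0,T;H^{1,3/4})$ norms of $u,\widetilde u$. Estimating the last term by $\|F(t)\|_{L_2^{\varkappa_{\alpha,\beta}}}\|w(t)\|_{L_2^{\varkappa_{\alpha,\beta}}}$ and applying Gronwall's lemma (in the standard form that tolerates an $L^1$ coefficient and an $L^1$-in-time forcing that is linear in the square root of the quantity) yields the bound on $\|w\|_{L_\infty(0,T;L_2^{\varkappa_{\alpha,\beta}})}$; integrating the inequality in $t$ afterwards then gives the bound on $\||Dw|\|_{L_2(0,T;L_2^{\varkappa_{\alpha-1/2,\beta}})}$. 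I expect the main obstacle to be bookkeeping rather than conceptual: one must verify that every application of \eqref{1.8} is to a weight pair $(\psi_1,\psi_2)$ of admissible functions with $\psi_1 \leq c_0\psi_2$ (here $\psi_1 = \varkappa_{\alpha-1/2,\beta}$ or $1$ and $\psi_2 = \varkappa_{\alpha,\beta}$), and that the exponents in the Hölder splittings are chosen so that the powers of $\||Dw|\varkappa_{\alpha-1/2,\beta}^{1/2}\|_{L_2}$ appearing stay strictly below $2$ (so they can be absorbed by Young's inequality into the good left-hand term), which forces the $3/4$ threshold and dictates the precise weighted spaces in the statement. Uniqueness is then immediate by taking $u_0 = \widetilde u_0$, $f = \widetilde f$ in \eqref{4.1}.
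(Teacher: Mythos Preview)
Your overall strategy matches the paper's, but there is a genuine gap in the justification step and the H\"older/interpolation split is not quite right as written.

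\textbf{Justification.} You propose to legitimize the weighted energy identity for $w$ by working ``on the regularized level used to construct the solutions (equation~\eqref{3.25}).'' This is circular for a continuous-dependence theorem: $u$ and $\widetilde u$ are \emph{arbitrary} weak solutions in $X^{1,\alpha}(\Pi_T)$, and you do not yet know that every such solution is a limit of the viscosity approximations---that is essentially the uniqueness you are proving. The paper's route is different: it writes the equation for $v=u-\widetilde u$ as the \emph{linear} problem \eqref{2.1} with $\delta=0$, $f_0=f-\widetilde f$, and $f_2=-\tfrac12(u^2-\widetilde u^2)$, checks that $f_2\in L_2\bigl(0,T;L_2^{\psi^2/\psi'}\bigr)$ (this is \eqref{4.2}, using $\psi^2/\psi'\sim\varkappa_{\alpha+1/2,\beta}\leq c\rho_\alpha^2$), and then invokes Lemmas~\ref{L2.3} and~\ref{L2.4} directly: Lemma~\ref{L2.4} furnishes a weak solution of the linear problem satisfying the energy equality \eqref{2.12}, Lemma~\ref{L2.3} says that weak solution is unique, hence $v$ itself satisfies \eqref{2.12}. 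No regularization of $u,\widetilde u$ is needed or permissible.

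\textbf{Nonlinear estimate.} Your split $\iiint w^2|Du|\,\varkappa_{\alpha,\beta}\leq\|w\varkappa_{\alpha,\beta}^{1/2}\|_{L_4}^2\,\||Du|\varkappa_{\alpha,\beta}^{1/2}\|_{L_2}$ does not balance the weights, and applying \eqref{1.8} with $\psi_1=\varkappa_{\alpha-1/2,\beta}$, $\psi_2=\varkappa_{\alpha,\beta}$, $q=4$ does not produce $\varkappa_{\alpha,\beta}^{1/2}$ on the left-hand side of \eqref{1.8}. The paper instead uses Cauchy--Schwarz as
\[
\iiint |u_x|\,v^2\psi\,dxdydz\leq\Bigl(\iiint u_x^2\bigl(\psi/\psi'\bigr)^{3/2}\,dxdydz\Bigr)^{1/2}\Bigl(\iiint v^4(\psi')^{3/2}\psi^{1/2}\,dxdydz\Bigr)^{1/2},
\]
then applies \eqref{1.8} with $\psi_1=\psi'$, $\psi_2=\psi$, $q=4$ to the second factor, obtaining $\bigl\||Dv|(\psi')^{1/2}\bigr\|_{L_2}^{3/2}\|v\psi^{1/2}\|_{L_2}^{1/2}$ plus lower order (power $3/2<2$, hence absorbable by the good term on the left). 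The first factor is exactly where $\alpha\geq 3/4$ enters: since $(\psi/\psi')^{3/2}\sim(1+x)^{3/2}\leq c\rho_{3/4}(x)$ for $x\geq 0$ and is bounded for $x\leq 0$, it is controlled by $\|u\|_{L_\infty(0,T;H^{1,3/4})}$ independently of $\alpha$ and $\beta$.
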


\begin{proof}
Let $\psi(x)\equiv \varkappa_{\alpha,\beta}(x)$, then $\psi'(x)\sim \varkappa_{\alpha-1/2,\beta}(x)$ and
$\psi^2(x)/\psi'(x)\sim \varkappa_{\alpha+1/2,\beta}(x)$. Since $\alpha\geq 1/2$
\begin{multline}\label{4.2}
\iiint u^4\varkappa_{\alpha+1/2,\beta}\,dxdydz 
\leq c\iiint u^4 \rho^2_\alpha\,dxdydz \\ 
\leq c_1\Bigl(\iiint(u^2+|Du|^2)\rho_\alpha\,dxdydz\Bigr)^2,
\end{multline}
we have that $u^2\in L_\infty(0,T;L_2^{\psi^2(x)/\psi'(x)})$. 

Denote $v\equiv u-\tilde{u}$, then the function $v$ is a solution to a linear problem
\begin{equation}\label{4.3}
v_t+bv_x+\Delta v_x=(f-\widetilde{f})-\frac 12 \bigl(u^2-\widetilde{u}^2\bigr)_x\equiv f_0+f_{2x},
\end{equation}
\begin{equation}\label{4.4}
v\big|_{t=0}=u_0-\widetilde{u}_0\equiv v_0,\qquad v\big|_{(0,T)\times\partial\Sigma}=0.
\end{equation}

The hypotheses of Lemmas~\ref{L2.3} and~\ref{L2.4} ($\delta = 0$) hold for this problem, therefore, by virtue of \eqref{2.12}
\begin{multline}\label{4.5}
\iiint v^2\psi\,dxdydz
+\int_0^t\!\!\iiint|Dv|^2\psi'\,dxdydzd\tau \leq \iiint v_0^2\psi\,dxdydz \\
+c\int_0^t\!\!\iiint v^2\psi\,dxdydzd\tau 
+2\int_0^t\!\!\iiint f_0 v\psi\,dxdydzd\tau \\
-2\int_0^t\!\!\iiint f_2(v\psi)_x\,dxdydzd\tau. 
\end{multline}
It is easy to see that
$$
\Bigl|\iiint f_2(v\psi)_x\,dxdydz \Bigr|  
\leq c\iiint \bigl(|u_x|+|\widetilde{u}_x|+|u|+|\widetilde{u}|\bigr)v^2\psi\,dxdydz.
$$
With the use of \eqref{1.8} we derive that
\begin{multline}\label{4.6}
\iiint |u_x|v^2\psi\,dxdydz \\ \leq 
\Bigl(\iiint u^2_x \Bigl(\frac{\psi}{\psi'}\Bigr)^{3/2}\,dxdydz\Bigr)^{1/2}
\Bigl(\iiint v^4(\psi')^{3/2}\psi^{1/2}\,dxdydz\Bigr)^{1/2} \\ 
\leq c\Bigl(\iiint u_x^2\rho_{3/4}\,dxdydz\Bigr)^{1/2} 
\Bigl[\Bigl(\iiint|Dv|^2\psi'\,dxdydz\Bigr)^{3/4}
\Bigl(\iiint v^2\psi\,dxdydz\Bigr)^{1/4} \\ 
+\iiint v^2\psi\,dxdydz\Bigr].
\end{multline}
Other terms are estimated in a similar way and inequality \eqref{4.5} yields the desired result.
\end{proof}

\begin{remark}\label{R4.1}
It easy to see that continuous dependence of solutions can be also established by the same argument in spaces with exponential weights at $+\infty$. More precisely, if $u_0,\widetilde{u}_0\in H_0^{1,\alpha,\exp}$, 
$f,\widetilde{f}\in L_1(0,T;H_0^{1,\alpha,\exp})$ for some $\alpha>0$ then for corresponding weak solutions from the space $X^{1,\alpha,\exp}(\Pi_T)$ the analogue of inequality \eqref{4.1} holds, where the functions $\varkappa_{\alpha,\beta}$, $\varkappa_{\alpha-1/2,\beta}$ are substituted by $e^{2\alpha x}$. Moreover, the constant $c$ in the right side depends here on the norms of functions $u$, $\widetilde u$ in the space $L_\infty(0,T;H^1)$ (since in this case $\psi\sim \psi'$, see \eqref{4.6}). Unfortunately, the applied technique does not allow to avoid exponentially decreasing weight at $-\infty$.
\end{remark}

\section{Large-time decay of solutions}\label{S5}

\begin{proof}[Proof of Theorem~\ref{T1.3}]
Let $\psi(x)\equiv e^{2\alpha x}$ for some $\alpha\in (0,1]$. As in the proof of Theorem~\ref{T1.1} consider the set of solutions $u_h\in C([0,T];L_2^{\alpha,\exp})\cap L_2(0,T;H_0^{1,\alpha\exp})$ to problems \eqref{3.6}, \eqref{1.2}, \eqref{1.3}. Of course, these solutions exist for all positive $T$. 

First of all, note that equality \eqref{3.8} yields here that
\begin{equation}\label{5.1}
\|u_h(t,\cdot,\cdot,\cdot)\|_{L_2} \leq \|u_0\|_{L_2}.
\end{equation}

Now write down equality \eqref{3.11} (for $\psi\equiv e^{2\alpha x}$, we again temporarily omit the index $h$):
\begin{multline}\label{5.2}
\iiint u^2\psi\,dxdydz+2\alpha\int_0^t\!\! \iiint (3u_x^2+u_y^2+u_z^2)\psi\,dxdydzd\tau \\
+2h \int_0^t \!\! \iiint |Du|^2\psi\,dxdydzd\tau-2\alpha(b+4\alpha^2+2h\alpha)\int_0^t \!\! \iiint u^2 \psi\,dxdydzd\tau  \\= \iiint u_0^2\psi\,dxdydz 
+4\int_0^t\!\! \iiint \bigl(g'(u)u\bigr)^*\psi'\,dxdydzd\tau.
\end{multline}
Since $\bigl|\bigl(g'(u)u\bigr)^*\bigr|\leq u^2/h$ it is obvious that $\bigl(g'(u)u\bigr)^*\psi'\in L_\infty(0,T;L_1)$ and from equation \eqref{5.2} follows such an inequality in a differential form: for a.e. $t>0$
\begin{multline}\label{5.3}
\frac d{dt}\iiint u^2\psi\,dxdydz+2\alpha \iiint (3u_x^2+u_y^2+u_z^2)\psi\,dxdydz \\ \leq
2\alpha(b+4\alpha^2+2\alpha) \iiint u^2 \psi\,dxdydz+
2 \iiint \bigl(g'(u)u\bigr)^*\psi'\,dxdydz.
\end{multline}
Continuing inequality \eqref{3.12}, we find with the use of \eqref{5.1} that uniformly with respect to $\Omega$, $h$ and $\alpha$ (see Remark~\ref{R1.3})
\begin{multline}\label{5.4}
\Bigl|\iiint \bigl(g'(u)u\bigr)^*\psi'\,dxdydz\Bigr| \leq 
\alpha \iiint |Du|^2\psi\,dxdydz \\+
c\alpha \bigl(\|u_0\|_{L_2}+\|u_0\|_{L_2}^4\bigr) \iiint u^2\psi\,dxdydz.
\end{multline}
Apply Friedrichs inequality (see, for example, \cite{LSU}): for $\varphi\in H_0^1(\Omega)$
\begin{equation}\label{5.5}
\|\varphi\|_{L_2(\Omega)} \leq c|\Omega|^{1/2}\bigl(\|\varphi_y\|_{L_2(\Omega)}+\|\varphi_z\|_{L_2(\Omega)}\bigr).
\end{equation}
Therefore, for certain constant $c_0$
\begin{equation}\label{5.6}
\iiint (u_y^2+u_z^2)\psi\,dxdydz \geq \frac{c_0}{|\Omega|}\iiint u^2\psi\,dxdydz.
\end{equation}
Combining \eqref{5.3}, \eqref{5.4}, \eqref{5.6} provides that uniformly with respect to $\Omega$, $h$ and $\alpha$
\begin{multline}\label{5.7}
\frac d{dt} \iiint u^2\psi\,dxdydz + \frac{c_0\alpha}{|\Omega|}\iiint u^2\psi\,dxdydz \\ \leq
c\alpha\bigl(b+6\alpha+\|u_0\|_{L_2}+\|u_0\|_{L_2}^4\bigr) \iiint u^2\psi\,dxdydz.
\end{multline}
Choose $\Omega_0 = \displaystyle \frac {c_0}{2cb}$ if $b>0$, $\alpha_0\leq 1$ and $\epsilon_0$ satisfying an inequality
$\displaystyle c(6\alpha_0+\epsilon_0+\epsilon_0^4) \leq \frac {c_0}{4|\Omega|}$, 
$\beta =\displaystyle \frac{c_0}{8|\Omega|}$, then it follows from \eqref{5.7} that uniformly with respect to $h$
\begin{equation}\label{5.8}
\|u_h(t.\cdot,\cdot,\cdot)\|_{L_2^{\psi(x)}} \leq e^{-\alpha\beta t} \|u_0\|_{L_2^{\psi(x)}} \quad \forall t\geq 0.
\end{equation}
Passing to the limit when $h\to+0$ we derive \eqref{1.7}.
\end{proof}

\begin{remark}\label{R5.1}
Besides \eqref{5.5} Friedrichs inequality can be written in another form: if $\Omega\subset (0,L_1)\times (0,L_2)$ then
for $\varphi\in H_0^1(\Omega)$
$$
\|\varphi\|_{L_2(\Omega)} \leq \frac{\min(L_1,L_2)}{\pi}
\bigl(\|\varphi_y\|_{L_2(\Omega)}+\|\varphi_z\|_{L_2(\Omega)}\bigr),
$$
with the corresponding modification of the theorem.
\end{remark}

\section{The initial value problem}\label{S6}

Consider the initial value problem for equation \eqref{1.1} in $\mathbb R^3$ with initial condition \eqref{1.2}. Then all the aforementioned results except Theorem~\ref{T1.3} have analogues for this problem.

In fact, in the smooth case $u_0\in \EuScript S(\mathbb R^3)$, $f\in C^\infty([0,T];\EuScript S(\mathbb R^3))$ a solution to the initial value linear problem \eqref{2.1}, \eqref{1.2} from the space $C^\infty([0,T];\EuScript S(\mathbb R^3))$ can be constructed via Fourier transform. Exponential decay when $x\to+\infty$ similarly to Lemma~\ref{L2.2} can be established for any derivative of this solution. The consequent argument of Sections~\ref{S2}--\ref{S4} can be extended to the case of the initial value problem without  any essential modifications. As a result the following theorems hold.

For a measurable non-negative on $\mathbb R$ function $\psi(x)\not\equiv \text{const}$, let
\begin{equation*}
L_2^{\psi(x)}(\mathbb R^3) =\{\varphi(x,y,z): \varphi\psi^{1/2}(x)\in L_2(\mathbb R^3)\},
\end{equation*}
\begin{equation*}
H^{k,\psi(x)}(\mathbb R^3)=\{\varphi: |D^j\varphi|\in L_2^{\psi(x)}(\mathbb R^3), \  j=0,\dots,k\}
\end{equation*}
endowed with natural norms. Introduce spaces $X^{k,\psi(x)}((0,T\times \mathbb R^3))$, $k=0 \mbox{ or }1$, for admissible non-decreasing weight functions $\psi(x)\geq 1\ \forall x\in\mathbb R$, consisting of functions $u(t,x,y,z)$ such that
$$
u\in C_w([0,T]; H^{k,\psi(x)}(\mathbb R^3)), \qquad
|D^{k+1}u|\in L_2(0,T;L_2^{\psi'(x)}(\mathbb R^3)),
$$
$$
\sup_{x_0\in\mathbb R}\int_0^T\!\! \int_{x_0}^{x_0+1}\!\!\iint_{\mathbb R^2} |D^{k+1}u|^2\,dydzdxdt<\infty
$$
(let $X^{\psi(x)}((0,T)\times \mathbb R^3)=X^{0,\psi(x)}((0,T)\times\mathbb R^3)$). 

\begin{theorem}\label{T6.1}
Let $u_0\in L_2^{\psi(x)}(\mathbb R^3)$, $f\in L_1(0,T; L_2^{\psi(x)}(\mathbb R^3))$ for certain $T>0$ and an admissible weight function $\psi(x)\geq 1\ \forall x\in\mathbb R$ such that $\psi'(x)$ is also an admissible weight function. Then there exists a weak solution to problem \eqref{1.1}, \eqref{1.2} $u \in X^{\psi(x)}((0,T)\times \mathbb R^3))$. 
\end{theorem}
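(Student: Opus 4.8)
The plan is to transcribe the proof of Theorem~\ref{T1.1} essentially verbatim, the only structural change being that the Fourier series in $(y,z)$ used in Section~\ref{S2} is replaced by a full Fourier transform in all three spatial variables; this if anything simplifies the linear theory, since there are no eigenfunctions and no boundary conditions to track. Concretely, I would first record the $\mathbb R^3$-analogues of Lemmas~\ref{L2.1}--\ref{L2.4} and of Lemma~\ref{L3.1}: for $\delta=h\in(0,1]$ and a globally Lipschitz $g$ with $g(0)=0$, the regularized nonlinear problem is uniquely solvable in $C([0,T];L_2^{\psi(x)}(\mathbb R^3))\cap L_2(0,T;H^{1,\psi(x)}(\mathbb R^3))$, with the energy identities \eqref{2.12}, \eqref{2.13} holding unchanged (every integration by parts in $y,z$ now has no boundary term). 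Then, exactly as in \eqref{3.6}--\eqref{3.7}, I would introduce the truncated nonlinearities $g_h$ and the parabolically regularized problems
\[
u_t+bu_x+\Delta u_x-h\Delta u+(g_h(u))_x=f
\]
with initial condition \eqref{1.2}, obtaining solutions $u_h$ on all of $[0,T]$ via the analogue of Lemma~\ref{L3.1}.

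The second step is the collection of a priori bounds on $u_h$ uniform in $h$. The $L_2(\mathbb R^3)$ bound $\|u_h\|_{C([0,T];L_2)}\le c$ follows from \eqref{2.13} because $g_h'(u_h)u_{hx}u_h$ is an exact $x$-derivative, so that term integrates to zero over $\mathbb R^3$. The weighted bound $\|u_h\|_{C([0,T];L_2^{\psi(x)})}+\||Du_h|\|_{L_2(0,T;L_2^{\psi'(x)})}+h^{1/2}\|u_h\|_{L_2(0,T;H^{1,\psi(x)})}\le c$ follows from \eqref{2.12} after estimating $\iiint|u_h|^3\psi'\,dxdydz$ by the interpolation inequality \eqref{1.8} with $k=1$, $m=0$, $q=4$, $\psi_1=\psi_2\equiv\psi'$; by Remark~\ref{R1.2} this inequality is valid on $\mathbb R^3$, so nothing new is needed. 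Finally, rewriting \eqref{2.12} with $\psi(x)$ replaced by $\rho_0(x-x_0)$ and taking the supremum over $x_0\in\mathbb R$ yields the local smoothing bound $\lambda(|Du_h|;T)\le c$ in the $\mathbb R^3$-version of \eqref{1.6}; this step is unaffected by the change of geometry because $\rho_0(x-x_0)$ depends only on $x$.

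The third step is compactness and passage to the limit. On each cube $Q_n=(-n,n)^3$ the previous bounds give $\|u_h\|_{L_2(0,T;H^1(Q_n))}\le c(n)$ and, since $|g_h(u_h)|\le u_h^2$, $\|g_h(u_h)\|_{L_\infty(0,T;L_1(Q_n))}\le c(n)$; via the embedding $L_1(Q_n)\subset H^{-2}(Q_n)$ and the equation itself, $\|u_{ht}\|_{L_1(0,T;H^{-3}(Q_n))}\le c(n)$. The compactness theorem of \cite{S2} then makes $\{u_h\}$ precompact in $L_2((0,T)\times Q_n)$, and a diagonal extraction over $n$ produces a subsequence converging in $L_2((0,T)\times Q_n)$ for every $n$ and weakly-$*$ in the spaces dictated by the uniform bounds to a limit $u\in X^{\psi(x)}((0,T)\times\mathbb R^3)$. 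As in the proof of Theorem~\ref{T1.1}, one checks $g_h(u_h)\to u^2/2$ in $L_1((0,T)\times Q_n)$ by splitting $|g_h(u_h)-u^2/2|\le 2(|u_h|+|u|)|u_h-u|+|g_h(u)-u^2/2|$ and using pointwise convergence with dominating function $2u^2$, which permits passing to the limit in the weak formulation first for test functions supported in $Q_n$ and then for general ones via the cut-off $\varphi\eta(n-|x|)$, $n\to\infty$.

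The step I expect to require the most care is the compactness and passage to the limit in the unbounded transverse geometry: the weighted estimate controls decay only as $x\to+\infty$ and gives no decay in $y,z$, so the strong convergence of $u_h$ is merely local, and one must verify that local strong convergence together with the global weak bounds still suffices to identify $\lim(g_h(u_h))_x$ with $(u^2/2)_x$ distributionally against the admissible test functions of the $\mathbb R^3$-analogue of Definition~\ref{D3.1}. This does go through, exactly as in the bounded-$\Omega$ case, because the quadratic term is always tested against $\varphi_x$ with $\varphi$ decaying appropriately; everything else is a routine transcription of Sections~\ref{S2}--\ref{S3}.
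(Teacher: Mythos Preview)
Your proposal is correct and mirrors exactly the route the paper itself sketches in Section~\ref{S6}: replace the Fourier series in $(y,z)$ by the full Fourier transform in all three variables and rerun the arguments of Sections~\ref{S2}--\ref{S3} without essential modification, invoking Remark~\ref{R1.2} for the interpolation inequality on $\mathbb R^3$. The only slip is cosmetic: since now $Q_n=(-n,n)^3$, the final cut-off used to pass from compactly supported test functions to general ones should truncate in all three spatial variables rather than just in $x$.
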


\begin{theorem}\label{T6.2}
Let $u_0\in H^{1,\psi(x)}(\mathbb R^3)$, $f\in L_1(0,T; H^{1,\psi(x)}(\mathbb R^3))$ for certain $T>0$ and an admissible weight function $\psi(x)\geq 1\ \forall x\in\mathbb R$ such that $\psi'(x)$ is also an admissible weight function. Then there exists a weak solution to problem \eqref{1.1}, \eqref{1.2} $u\in X^{1,\psi(x)}((0,T)\times\mathbb R^3))$ and it is unique in this space if $\psi(x)\geq \rho_{3/4}(x)$ $\forall x\in \mathbb R$.  
\end{theorem}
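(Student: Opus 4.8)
The plan is to transcribe, to the whole-space setting, the scheme behind Theorem~\ref{T1.2}: solvability of the linearized equation in weighted spaces, parabolic regularization of the full equation, uniform weighted a~priori estimates, a local-compactness passage to the limit for existence, and a weighted continuous-dependence estimate for uniqueness. The essential simplification is that $\Sigma$ is replaced by $\mathbb R^3$, so $\partial\Sigma=\varnothing$ and every boundary integral occurring in the integrations by parts of Section~\ref{S2} (behind Lemmas~\ref{L2.4}--\ref{L2.6}) disappears. The only place where the bounded domain $\Omega$ was genuinely exploited, namely inequality \eqref{2.5}, is replaced by the Fourier-multiplier bound $\bigl\||D^2_\perp\varphi|\bigr\|_{L_2(\mathbb R^2)}\le\|\Delta^\perp\varphi\|_{L_2(\mathbb R^2)}$ and its weighted analogue; the interpolation inequality \eqref{1.8} is valid on $\mathbb R^3$ by Remark~\ref{R1.2}. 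It is exactly this substitution that makes Theorem~\ref{T1.3} have no analogue, since Friedrichs-type bounds of the form \eqref{2.5} fail on $\mathbb R^2$.

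First I would treat the linear equation \eqref{2.1} on $(0,T)\times\mathbb R^3$. For $u_0\in\EuScript S(\mathbb R^3)$ and $f\in C^\infty([0,T];\EuScript S(\mathbb R^3))$ the solution is produced by the full Fourier transform and lies in $C^\infty([0,T];\EuScript S(\mathbb R^3))$; multiplying $\partial_x^j u$ by $2x^m\partial_x^j u$, summing the resulting identities against $\alpha^m/m!$ as in the proof of Lemma~\ref{L2.2}, one obtains exponential decay of the solution and all its derivatives as $x\to+\infty$. Passing to weak solutions, the analogues of Lemmas~\ref{L2.3}--\ref{L2.6} then follow verbatim (Hölmgren's argument for uniqueness; multiplication by $2u\psi$, by $-2(u_x\psi)_x-2\Delta^\perp u\,\psi$, and by $-u^2\psi$ for the weighted energy identities), with all boundary terms absent and all constants independent of any domain.

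For existence I would, as in the proofs of Theorems~\ref{T1.1} and~\ref{T1.2}, regularize by $u_t+bu_x+\Delta u_x-h\Delta u+(g_h(u))_x=f$, $h\in(0,1]$, with $g_h$ from \eqref{3.7} in the $L_2$-setting or directly $g(u)=u^2/2$ in the $H^1$-setting. Solvability of each regularized problem comes from the contraction argument of Lemmas~\ref{L3.1}--\ref{L3.2} (with \eqref{3.15}--\eqref{3.16} reproduced via \eqref{1.8} on $\mathbb R^3$), and the uniform-in-$h$ bounds are the whole-space versions of \eqref{3.10}, \eqref{3.13} and \eqref{3.26}--\eqref{3.27} together with the local-smoothing estimate for $\lambda(|D^{k+1}u_h|;T)$ obtained by taking $\widetilde\psi\equiv\rho_0(x-x_0)$ (cf.\ \eqref{3.14}, \eqref{3.28}). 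Precompactness of $\{u_h\}$ in $L_2\bigl((0,T)\times(-n,n)^3\bigr)$ for every $n$ follows from these bounds and $\|u_{ht}\|_{L_1(0,T;H^{-3}((-n,n)^3))}\le c(n)$ via the embedding theorem of \cite{S2}; the limit is the required weak solution in the space $X^{1,\psi(x)}((0,T)\times\mathbb R^3)$, with $g_h(u_h)\to u^2/2$ in $L_1$ on bounded boxes exactly as before.

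For uniqueness when $\psi\ge\rho_{3/4}$ I would follow the proof of Theorem~\ref{T4.1}: with $v\equiv u-\widetilde u$ solving the linear problem $v_t+bv_x+\Delta v_x=(f-\widetilde f)-\tfrac12(u^2-\widetilde u^2)_x$ and the weight $\psi\equiv\varkappa_{\alpha,\beta}$ for $\alpha\ge 3/4$, the energy identity \eqref{2.12} with $\delta=0$ (again with no boundary terms) yields the analogue of \eqref{4.5}, and the nonlinear contribution is estimated by \eqref{1.8} as in \eqref{4.6}, using $(\psi/\psi')^{3/2}\le c\,\rho_{3/4}$ and the fact that $u_x,\widetilde u_x\in L_\infty(0,T;L_2^{\rho_{3/4}})$ because $u,\widetilde u\in X^{1,\alpha}((0,T)\times\mathbb R^3)$ with $\alpha\ge 3/4$; Gronwall's lemma then gives the continuous-dependence bound and, in particular, uniqueness. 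I expect the only genuinely nonroutine point to be the local-compactness step: one must check that $\lambda(|D^{k+1}u_h|;T)<\infty$ together with the weighted $C_w([0,T];H^k)$-bound really supplies enough spatial regularity on each box $(-n,n)^3$ — including in the unbounded $y,z$ directions, where only the global $H^k$-bound is available — to legitimately invoke \cite{S2}. Everything else is a transcription of Sections~\ref{S2}--\ref{S4} with the boundary terms deleted.
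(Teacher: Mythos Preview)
Your proposal is correct and follows exactly the approach the paper itself indicates: the paper's ``proof'' of Theorem~\ref{T6.2} is simply the paragraph in Section~\ref{S6} stating that the smooth linear solution is built by Fourier transform, exponential decay at $+\infty$ is obtained as in Lemma~\ref{L2.2}, and the arguments of Sections~\ref{S2}--\ref{S4} carry over without essential modifications---which is precisely the scheme you outline in detail. Your one flagged concern about compactness in the $y,z$ directions is not actually an obstruction: both the $\lambda$-bound \eqref{1.6} and the $C_w([0,T];H^{k,\psi(x)})$-bound already integrate over all of $\mathbb R^2$ in $(y,z)$, so restricting to a box $(-n,n)^3$ yields the required $L_2(0,T;H^{k+1})$ and $L_\infty(0,T;H^k)$ bounds on that box exactly as in the bounded-$\Omega$ case.
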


The results on large-time decay can not be established by the same method because of absence of an analogue of Friedrichs inequality in the whole space.

Of course, one can extend the theory to the intermediate cases of domains, for example, 
$\Omega= (0,L)\times \mathbb R$ (here the analogue of Theorem~\ref{T1.3} is also valid, see Remark~\ref{R5.1}).


\begin{thebibliography}{30}
\bibitem{BF} E.S.~Baykova and A.V.~Faminskii, \textit{On initial-boundary-value problems in a strip for the generalized two-dimensional Zakharov--Kuznetsov equation}, Adv. Differential Equ., \textbf{18} (2013), 663--686.

\bibitem{BIN}O.V.~Besov, V.P.~Il'in, S.M.~Nikolskii, {\it Integral Representation of Functions and Embedding Theorems}, J. Wiley, 1978.

\bibitem{DL} G.G.~Doronin and N.A.~Larkin, \textit{Stabilization of regular solutions for the Zakharov--Kuznetsov equation posed on bounded rectangles and on a strip}, 25 Sep. 2012, arXiv: 1209.5767v1 [math.AP].

\bibitem{DL15} G.G.~Doronin and N.A.~Larkin, \textit{Stabilization for the linear Zakharov--Kuznetsov equation without critical size restrictions}, J. Math. Anal. Appl., {\bf 428}(2015), 337--355.

\bibitem{F88} A.V.~Faminskii, {\it Cauchy problem for the Korteweg--de~Vries equation and its generalizations}, Tr. semin. im. I.G. Petrovskogo, {\bf 13} (1988), 56--105. English transl. in J. Soviet Math., {\bf 50} (1990), 1381--1420.

\bibitem{F89} A.V.~Faminskii, {\it The Cauchy problem for quasilinear equations of odd order}, Mat. Sb., {\bf 180} (1989), 1183--1210. English transl. in Math. USSR-Sb., {\bf 68} (1991), 31--59.

\bibitem{F95} A.V.~Faminskii \textit{The Cauchy problem for the Zakharov--Kuznetsov equation}, Differ. Uravn., \textbf{31} (1995), 1070--1081. English transl. in  Differential Equ.,  \textbf{31} (1995), 1002--1012.

\bibitem{F02} A.V.~Faminskii \textit{On the mixed problem for quasilinear equations of the third order}, J. Math. Sci., \textbf{110} (2002), 2476--2507.

\bibitem{F07} A.V.~Faminskii, \textit{Nonlocal well-posedness of the mixed problem for the Zakharov--Kuznetsov equation}, J. Math. Sci.  \textbf{147} (2007), 6524--6537.

\bibitem{FB} A.V.~Faminskii and I.Yu.~Bashlykova, {\it Weak solution to one initial-boundary value problem with three boundary conditions for quasilinear equations of the third order}, Ukrainian Math. Bull., {\bf 5}(2008), 83--98.

\bibitem{F08} A.V.~Faminskii, \textit{Well-posed initial-boundary value problems for the Zakharov--Kuznetsov equation}, Electronic J. Differential Equ. {\bf 2008} (2008), no.~127, 1--23.

\bibitem{F12} A.V.~Faminskii, {\it Weak solutions to initial-boundary-value problems for quasilinear evolution equations of an odd order}, Adv. Differential Equ., {\bf 17}(2012), 421--470.

\bibitem{F14} A.V.~Faminskii, {\it An initial-boundary value problem in a strip for a two-dimensional equation of Zakharov--Kuznetsov type}, 22 Nov. 2014, arXiv: 1312.4444v4 [math.AP].

\bibitem{K} T.~Kato, {\it On the Cauchy problem for the (generalized) Korteweg--de~Vries equation}, Stud. Appl. Math., Adv. Math. Suppl. Stud., {\bf 8}(1983), 93--128.

\bibitem{KF} S.N.~Kruzhkov, A.V.~Faminskii, {\it Generalized solutions of the Cauchy problem for the Korteweg--de~Vries equation}, Math. Sb., {\bf 120(162)}(1983), 396--425. English transl. in Sb. Math., {\bf 48}(1984), 391--421.

\bibitem{LSU} O.A.~Ladyzhenskaya, V.A.~Solonnikov and N.N.~Uraltseva, \textit{Linear and Quasilinear Equations of Parabolic Type}, Amer. Math. Soc., Providence, Rhode Island, 1968.

\bibitem{LLS} D.~Lannes, F.~Linares and J.-C.~Saut, {\it The Cauchy problem for the Euler-Poisson system and derivation of the Zakharov--Kuznetsov equation}, Progress Nonlinear Differential Equ. Appl., {\bf 84}(2013), 183--215.

\bibitem{L13} N.A.~Larkin, \textit{Exponential decay of the $H^1$-norm for the $2D$ Zakharov--Kuznetsov equation on a half-strip}, J. Math. Anal. Appl.  \textbf{405} (2013), 326--335.

\bibitem{L14} N.A.~Larkin, {\it Global regular solutions for the 3D Zakharov--Kuznetsov equation posed on a bounded domain}, 26 Nov. 2014, arXiv: 1411.7307v1 [math.AP].

\bibitem{LT} N.A.~Larkin and E.~Tronco, {\it Regular solutions of the 2D Zakharov--Kuznetsov equation on a half-strip}, J. Differential Equ., {\bf 254}(2013), 81--101.

\bibitem{LP} F.~Linares and A.~Pastor, \textit{Well-posedness for the two-dimensional modified Zakharov--Kuznetsov equation}, SIAM J. Math. Anal.,  \textbf{41} (2009), 1323--1339.

\bibitem{LPS} F.~Linares, A.~Pastor and J.-C.~Saut, \textit{Well-posedness for the Zakharov--Kuznetsov equation in a cylinder and on the background of a KdV soliton}, Comm. Partial Differential Equ., \textbf{35} (2010), 1674--1689.

\bibitem{LS} F.~Linares, J.-C.~Saut, {\it The Cauchy problem for the 3D Zakharov--Kuznetsov equation}, Disc. Cont. Dyn. Syst. A, {\bf 24}(2009), 546--565.

\bibitem{Mikh} V.P.~Mikhailov,  \textit{Partial Differential equations}, Mir Publishers, Moscow, 1978.

\bibitem{RS} F.~Ribaud, S.~Vento {\it Well-posedness results for the three-dimensional Zakharov--Kuznetsov equation}, SIAM J. Math. Anal., {\bf 44}(2012), 2289--2304.

\bibitem{S1} J.-C.~Saut, {\it Sur quelques generalizations de l'equation de Korteweg--de~Vries}, J. Math. Pures Appl., {\bf 58}(1979), 21--61.

\bibitem{ST} J.-C.~Saut and R.~Temam, {\it An initial boundary value problem for the Zakharov--Kuznetsov equations}, Adv. Differential Equ., {\bf 15}(2010), 1001--1031.

\bibitem{STW} J.-C.~Saut, R.~Temam and C.~Wang, \textit{An initial and boundary-value problem for the Zakharov--Kuznetsov equation in a bounded domain}, J. Math. Phys. \textbf{53} (2012), 115612, doi: 10.1063/1.4752102.

\bibitem{S2} J.~Simon, {\it Compact sets in the space $L^p(0,T;B)$}, Ann. Mat. Pura Appl.,  {\bf 146}(1987), 65--96.

\bibitem{W1} C.~Wang, {\it Local existence of strong solutions to the 3D Zakharov--Kuznetsov equation in a bounded domain}, Appl. Math. Optim., {\bf 69}(2014), 1--19.

\bibitem{W2} C.~Wang, {\it The existence of strong solutions to the 3D Zakharov--Kuznetsov equation in a bounded domain}, 4 Feb. 2014, arXiv: 1310.2867v2 [math.AP].

\bibitem{ZK} V.E.~Zakharov and E.A.~Kuznetsov, {\it On threedimentional solutions}, Zhurnal Eksp. Teoret. Fiz., {\bf 66}(1974), 594--597. English transl. in Soviet Phys. JETP, {\bf 39}(1974), 285--288.

\end{thebibliography}
\end{document}